\newcommand{\ep}{\varepsilon}
\newcommand{\ffi}{\varphi}
\newcommand{\teta}{\vartheta}
\newcommand{\R}{\mathbb{R}}
\newcommand{\N}{\mathbb{N}}
\newcommand{\MM}{\mathbb{M}^{2{\times}2}}
\newcommand{\MMM}{\mathbb{M}^{3{\times}3}}
\newcommand{\Mmn}{\mathbb{M}^{m{\times}n}}
\newcommand{\MNN}{\mathbb{M}^{N{\times}N}}
\newcommand{\Om}{\Omega}
\newcommand{\pa}{\partial}
\newcommand{\na}{\nabla}
\newcommand{\rank}{{\rm rank}\,}
\newcommand{\tr}{{\rm tr}\,}
\newcommand{\divv}{{\rm div}\,}
\mathchardef\emptyset="001F
\numberwithin{equation}{section}
\numberwithin{figure}{section}
\numberwithin{subsection}{section}
\newtheorem{theorem}{Theorem}[section]
\newtheorem{prop}[theorem]{Proposition}
\newtheorem{lemma}[theorem]{Lemma}
\newtheorem{remark}[theorem]{Remark}
\newtheorem{defi}[theorem]{Definition}
\title[Attainment results for nematic elastomers]{Attainment results for nematic elastomers}
\author[V.~Agostiniani]{Virginia Agostiniani}
\address{OxPDE - Mathematical Institute, 
Woodstock Road, Oxford OX2 6GG- UK}
\email{Virginia.Agostiniani@maths.ox.ac.uk}
\author[G.~Dal Maso]{Gianni Dal Maso}
\address{SISSA, via Bonomea 265, 34136 Trieste - Italy}
\email{dalmaso@sissa.it}
\author[A.~DeSimone]{Antonio DeSimone}
\address{SISSA, via Bonomea 265, 34136 Trieste - Italy}
\email{desimone@sissa.it}
\thanks{V.~A. has received funding from the 
European Research Council under the 
European Union's Seventh Framework Programme (FP7/2007-2013) / ERC grant agreement ${\rm n^o}$ 291053. Partial funding has been provided also from the ERC Advanced Grants QuaDynEvoPro, grant agreement ${\rm n^o}$ 290888, and MicroMotility, grant agreement ${\rm n^o}$ 340685, and  by the Italian 
Ministry of Education, University, and
Research through the Project ``Calculus of Variations" (PRIN 2010-11).}
\date{\today}
\begin{document}

%%%%%%%%%%%%%%%%%%%%%%%%
%%%%%%%%%%%%%%%%%%%%%%%%

\begin{abstract}

We consider a class of non-quasiconvex frame indifferent energy densities which
includes Ogden-type energy densities for nematic elastomers.
For the corresponding geometrically linear problem we provide an
explicit minimizer of the energy functional satisfying a nontrivial boundary condition. 
Other attainment results, both for the nonlinear and the linearized model, are obtained by using the theory of convex integration
introduced by M\"uller and \u Sver\'ak in the context of crystalline solids.  

\end{abstract}

\maketitle

\keywords{Keywords: nematic elastomers, convex integration, solenoidal fields.}

\subjclass{MSC 2010: 74G65, 
% (energy minimization) 
74B20,
% (nonlinear elasticity)  
74B15, 
% (equations linearized about a deformed state)
76A15.} 
% (liquid crystals)

%%%%%%%%%%%%%%%%%%%%%%%%
%%%%%%%%%%%%%%%%%%%%%%%%

\section{Introduction}

%%%%%%%%%%%%%%%%%%%%%%%%
%%%%%%%%%%%%%%%%%%%%%%%%

Nematic elastomers are rubber-like solids made of a polymer network incorporating nematogenic molecules.
One of the main features of these materials is their ability to accommodate macroscopic deformations at no energy cost. 
Indeed, while the nematic mesogens are randomly oriented at high temperature, 
below a certain transition temperature they align to have 
their long axes roughly parallel and this alignment causes a spontaneous elastic deformation of the underlying polymer network.   
If $n\in S^2$ represents the direction of the nematic alignment,
the gradient of the induced spontaneous deformation is given by 
\begin{equation}\label{Lepn_intro}
L_n^{1/2}:=a^{\frac13}n\otimes n+a^{-\frac16}(I-n\otimes n),
\end{equation}
where $a>1$ is a non-dimensional material parameter. 
Choosing as reference configuration $\Om$ the one the sample would exhibit 
in the high-temperature phase \cite{ferro}, we consider the energy density 
\begin{equation}\label{inco_intro}
W_n(F):=
\sum_{i=1}^N\frac{c_i}{\gamma_i}\left[\tr\left(L_n^{-\frac12}FF^TL_n^{-\frac12}\right)^{\frac{\gamma_i}2}-3\right],
\qquad\det F=1,
\end{equation}
where $F\in\MMM$ is a $3{\times}3$ matrix representing the gradient (at a single macroscopic point) 
of a deformation, which maps the reference configuration into the current configuration. 
Moreover, $\gamma_i$ and $c_i$, for $i=1,...,N$, are material constants such that $\gamma_i\geq2$, $c_i>0$.
Note that in (\ref{inco_intro}) the power $\frac{\gamma_i}2$ refers to the matrix $L_n^{-1/2}FF^TL_n^{-1/2}$.
This is an energy density studied in \cite{AgDe2} and can be considered as an ``Ogden-type''
generalization of the classical ``Neo-Hookean'' expression originally proposed by 
Bladon, Terentjev and Warner \cite{Bla} to model an incompressible nematic elastomer.
This is obtained from (\ref{inco_intro}) by setting $N=1$ and $\gamma_1=2$.
Passing to the energy stored by the system when this is free to adjust $n$ at fixed $F$, we define 
\begin{equation}\label{W_come_min}
W(F):=\min_{n\in S^2}W_n(F),
\qquad\det F=1.
\end{equation}
This energy density is always nonnegative and it vanishes precisely when $FF^T=L_n$,
for some $n\in S^2$.
In other words, by left polar decomposition, the set of wells of $W$ is given by 
\[
\bigcup_{n\in S^2}\Big\{L_n^{\frac12}R\,:\,R\mbox{ is a rotation}\Big\}.
\]
Some experimental tests on samples of nematic elastomers show that these
materials tend to develop microstructures. 
In the mathematical model presented above, 
the formation of microstructures, which heavily influences the macroscopic material response,
is encoded in the energy-wells structure, which makes the density non quasiconvex.
In fact, minimization with respect to $n$ leads to a loss of stability of homogeneously
deformed states with respect to configurations which exhibit shear bands and look like
stripe domains.
Adopting a variational point of view, one is then typically interested in the study
of the free-energy functional $I(y):=\int_{\Om}W(\na y)dx$, with $y:\Om\to\R^3$
a deformation, under the basic assumption that the observed microstructures
correspond to minimizers or almost minimizers of $I$.   
It is also worth mentioning that some appropriate dynamical models for nematic elastomers
and the associated time-dependent evolutions may select, in the limit as the time goes to infinity,
some minimizers of $I$. 
In fact, the study of the existence of exact minimizers attempted in this paper is a natural first step before attempting  the analysis of time-dependent models.

In this paper, we provide some results concerning the existence of minimizers for $I$,
subject to suitable boundary conditions, focussing on
solutions $y$ which minimize the integrand pointwise, that is $W(\na y)=0$ a.e. in $\Om$.
More in general, we deal with energy densities of the form  
\begin{equation}\label{en_W_minimi_intro}
W(F):=
\sum_{i=1}^N\frac{c_i}{\gamma_i}\left[\left(\frac{\lambda_1(F)}{\mathsf e_1}\right)^{\gamma_i}
+\left(\frac{\lambda_2(F)}{\mathsf e_2}\right)^{\gamma_i}
+\left(\frac{\lambda_3(F)}{\mathsf e_3}\right)^{\gamma_i}-3\right],\qquad\det F=1,
\end{equation}
where $0<\lambda_1(F)\leq\lambda_2(F)\leq\lambda_3(F)$ are the ordered singular values of $F$, and
% $\gamma_i\geq2$, $c_i>0$ for every $i=1,...,N$, and
$0<\mathsf e_1\leq\mathsf e_2\leq\mathsf e_3$ are three fixed ordered real numbers such that
$\mathsf e_1\mathsf e_2\mathsf e_3=1$ and $\mathsf e_1<\mathsf e_3$.
The case $\mathsf e_1=\mathsf e_3$ is trivial because in this case 
expression (\ref{en_W_minimi_intro}) corresponds to the classical Neo-Hookean model.  
The Ogden-type energy density obtained by minimizing (\ref{inco_intro}) with respect to $n$ 
is included in (\ref{en_W_minimi_intro}) choosing $\mathsf e_1=\mathsf e_2=a^{-1/6}$ and $\mathsf e_3=a^{1/3}$ 
(see \cite[Proposition 5.1]{AgDe2}).
By using the standard inequality between geometric and arithmetic mean,
it is easy to see that the function $W(F)$ is minimized at the value zero if $F$ is in the set
\begin{equation}\label{wells_W_minimi}
K:=\left\{F\in\MMM:\det F=1\mbox{ and }\lambda_i(F)=\mathsf e_i,\,i=1,2,3\right\}.
\end{equation}

In this paper, we also treat the geometrically linear counterpart of the 
minimization problem associated with the density (\ref{W_come_min}). 
In this case, the problem consists in finding minimizers (which again minimize the integrand pointwise) of the free-energy
functional $\int_{\Om}V(e(u))dx$, where $u:\Om\to\R^3$ is a displacement vector field
subject to suitable boundary conditions and $e(u)$ denotes the symmetric part of $\na u$. 
Here, the energy density $V$ governing the purely mechanical response 
of the system in the small strain limit
is given, up to a multiplicative constant, by
\begin{equation}\label{en_V_dim2}
V(E):=\min_{n\in S^2}|E-U_n|^2,\qquad U_n:=\frac12(3n{\otimes}n-I),
\end{equation}
for every symmetric matrix $E\in\MMM$ such that $\tr E=0$.
The derivation of this expression from (\ref{inco_intro})-(\ref{W_come_min}) 
is recalled in Section \ref{attainment_linear}.
Clearly, we have that $V(E)=0$ if and only if $E=U_n$ for some $n\in S^2$ or,
equivalently, if and only if $E$ is in the set
\begin{equation}\label{nostroK_sym}
\hat K_0:=\left\{E\in\MMM\mbox{ symmetric}:\mu_1(E)=\mu_2(E)=-\frac12,\,\mu_3(E)=1\right\},
\end{equation}
where $\mu_1(E)\leq\mu_2(E)\leq\mu_3(E)$ are the ordered eigenvalues of $E$.

In Theorem \ref{slu_rot} we provide the explicit expression of a solution to the problem
\begin{equation}\label{cond_minimi_intro_22}
V(e(u))=0\quad\mbox{a.e.~in }\Om,\qquad u=w\quad\mbox{on }\pa\Om,
\end{equation}
when $\Om=B(0,r)\times\R$, and 
$w(x_1,x_2,x_3):=(\frac{x_1}4,\frac{x_2}4,-\frac{x_3}2)$. 
% for every $(x_1,x_2,x_3)\in\R^3$.
Note that the affine extension of $w$ to the interior of $\Om$ is such that 
$e(w)$ is a constant matrix not belonging to the set of minimizers $\hat K_0$.
As a consequence, the chosen boundary datum $w$ is nontrivial in the sense that $V(e(w))$ is a strictly positive constant.
The explicit solution we find, which is of class $W^{1,p}$ for every $1\leq p<\infty$, 
allows us to construct solutions to problem 
(\ref{cond_minimi_intro_22}) (endowed with the same regularity), 
for domains of the form $\omega\times\R$, $\omega$ being an open subset of $\R^2$.
Theorem \ref{slu_rot} shows that, thanks to the symmetries of $\hat K_0$, 
one can exhibit a simple explicit solution.
For general domains such an explicit solution is no longer available and,
just as in the case of solid crystals, many solutions of the
minimization problem exist but they can only be defined through iterative procedures. 

Theorem \ref{teoatteso_nonlinear} states that for every 
function $v:\Om\to\R^3$ which is piecewise affine and Lipschitz, if 
\begin{equation}\label{cond_minimi_intro_1}
\det \na v=1\quad\mbox{a.e.~in }\Om,\qquad
{\rm ess\,inf}_{\Om}\,\lambda_1(\na v)>\mathsf e_1,\qquad
{\rm ess\,sup}_{\Om}\,\lambda_3(\na v)<\mathsf e_3,
\end{equation}
then there exists a Lipschitz function $y:\Om\to\R^3$ such that
\begin{equation}\label{prob_nonlinear_intro_1}
W(\na y)=0\quad\mbox{a.e.~in }\Om,\qquad y=v\quad\mbox{on }\pa\Om.
\end{equation}
The same holds if $v$ is of class $C^{1,\alpha}(\overline{\Om};\R^3)$, for some $0<\alpha<1$,
and satisfies \eqref{cond_minimi_intro_1}.
Moreover, the solution $y$ can be chosen to be arbitrarily close to $v$ in $L^{\infty}$-norm.
This result is an application of the theory developed by M\"uller and \u Sver\'ak in \cite{MuSv2}
where the authors use Gromov's convex integration theory to study the existence of 
solutions of the first order partial differential relation 
\begin{equation}\label{prob_nonlinear_intro_11}
\na y\in\tilde K\quad\mbox{a.e.~in }\Om,\qquad y=v\quad\mbox{on }\pa\Om.
\end{equation} 
Here the set $\tilde K$ is contained in $\{F:M(F)=t\}$, $M(F)$
being a fixed minor of $F$, and $t\neq 0$. The case $M(F)=\det F$ and $t=1$ perfectly applies 
to our minimization problem (\ref{prob_nonlinear_intro_1}),
which can be rewritten as \eqref{prob_nonlinear_intro_11} with $\tilde K=K$.
%$K=\left\{3{\times}3\text{ matrices }F:\lambda_i(F)=e_i\,,\ i=1,2,3\right\}$
A crucial step in the theory 
% of M\"uller and \u Sver\'ak 
is the construction of a suitable approximation
of $\tilde K$ by means of sets relatively open in $\{F:\det F=1\}$ and satisfying some
technical assumptions (see Definition \ref{inapprox_per_nonlinear}). 
To obtain Theorem \ref{teoatteso_nonlinear}
we provide such an approximation for our set $K$ and apply
the results of \cite{MuSv2} directly.
 
To give a corresponding attainment result in the geometrically linear setting,
we have to consider the case where the set $\tilde K$ appearing in \eqref{prob_nonlinear_intro_11} is contained
in $\{F:\tr F=0\}$. 
The constraint on the determinant is then replaced by a constraint
on the divergence. This case is not explicitly treated in \cite{MuSv2}
and it has been considered in \cite{Mu-Pa2} to study a partial differential relation arising in the study of the
Born-Infeld equations. Moreover, convex integration techniques coupled with divergence constraints 
have been fruitfully employed by De Lellis and Sz{\'e}kelyhidi in the study of the Euler equations (see, e.g., \cite{Cam_Laz}).
In order to be self-contained we state and prove Theorem \ref{generici} 
and Proposition \ref{perHol}, which are a ``linearized'' version of
some of the results in \cite{MuSv2}.
We then apply Theorem \ref{generici} and Proposition \ref{perHol} 
to obtain the result which is described next (Theorem \ref{teoatteso_linear_3dim}).

Consider the small strain energy density $V$ and let us introduce the set
\begin{equation}\label{nostroK}
K_0:=\left\{A\in\MMM:\frac{A+A^T}2\in\hat K_0\right\},
\end{equation}
where $\hat K_0$ is defined in (\ref{nostroK_sym}). We have that $V(\frac{A+A^T}2)=0$ for every $A\in K_0$.

We prove that for every piecewise affine Lipschitz map $w:\Om\to\R^3$ such that 
\begin{equation}\label{cond_minimi_intro_2}
\divv w=0\quad\mbox{a.e.~in }\Om,\qquad
{\rm ess\,inf}_{\Om}\,\mu_1(e(w))>-\frac12,\qquad
{\rm ess\,sup}_{\Om}\,\mu_3(e(w))<1,
\end{equation}
there exists a Lipschitz function $u:\Om\to\R^3$ satisfying
(\ref{cond_minimi_intro_22}). 
The same conclusion holds if $w$ is of class $C^{1,\alpha}(\overline{\Om};\R^3)$, for some $0<\alpha<1$,
and satisfies \eqref{cond_minimi_intro_2}. Moreover, as for the nonlinear case, 
the solution can be chosen to be arbitrarily close to $w$ in $L^{\infty}$-norm. 

To prove this result, we apply Theorem \ref{generici} to the minimization problem (\ref{cond_minimi_intro_22}),
where the condition $V(e(u))=0$ a.e.~in $\Om$ is equivalent to $\na u\in K_0$ a.e.~in $\Om$.
As for the nonlinear case, also in the linearized context the main point consists in
exhibiting a suitable approximation of $K_0$ by means of
sets relatively open in $\{F\in\MMM:\tr F=0\}$ and satisfying some technical assumptions. 
For sake of completeness, we state and prove the
$2$-dimensional version  (Theorem \ref{teoatteso}) of this result, where the condition
(\ref{cond_minimi_intro_2}) is slightly simplified and the energy well structure allows
for more geometrical intuition and a more explicit proof.

The rest of the paper is organized as follows: in Section \ref{attainment_linear} we explain how to construct an
explicit solution to problem (\ref{cond_minimi_intro_22}), and in Section \ref{attainment_nonlinear} 
we state and prove the attainment results obtained by using the
theory of convex integration, for the nonlinear as well as for the geometrically linear case.
Section \ref{dim_teo_gen} is devoted to the proof of the results 
used in Section \ref{attainment_nonlinear}, which are an adaptation
of the approach of \cite{MuSv2} to divergence free vector fields.
 
%%%%%%%%%%%%%%%%%%%%%%%%
%%%%%%%%%%%%%%%%%%%%%%%%

\section{An explicit solution}\label{attainment_linear}

%%%%%%%%%%%%%%%%%%%%%%%%
%%%%%%%%%%%%%%%%%%%%%%%%

In this section, we focus on the geometrically linear model.
The set of $N{\times}N$ (real) matrices is denoted by $\MNN$, while
$Sym(N)$ is the subset of symmetric matrices. $\MNN_0$ and $Sym_0(N)$
denote the subsets of matrices in $\MNN$ and $Sym(N)$, respectively, which have null trace. The symbols
$sym A$ and $skw A$ stand for the symmetric and the skew symmetric part of a matrix $A$, respectively.
Given a displacement field $u:\Om\to\R^3$, where $\Om\subset\R^3$ 
is the reference configuration, we use the notation $e(u):=sym(\na u)$.

To derive the linearized version (\ref{en_V_dim2}) of the energy density $W$ defined by (\ref{inco_intro})-(\ref{W_come_min}),
consider the nematic tensor $L_n$ given in (\ref{Lepn_intro}), 
choose $a=(1+\ep)^3$, and 
relabel $L_n$ by $L_{n,\ep}$. By expanding in $\ep$ we have
$$
L_{n,\ep}^{\frac12}=I+\ep U_n+o(\ep),\qquad U_n:=\frac12(3n{\otimes}n-I).
$$  
For sake of completeness, let us derive the geometrically linear model in the
compressible case. We then obtain expression
(\ref{en_V_dim2}) by restricting to null trace matrices.
The following is a natural compressible generalization of expression (\ref{inco_intro}):
\begin{equation}\label{Wnc}
W_n^c(F):=
\sum_{i=1}^N\frac{c_i}{\gamma_i}
\left[(\det F)^{-\frac{\gamma_i}3}\tr\left(L_n^{-\frac12}FF^TL_n^{-\frac12}\right)^{\frac{\gamma_i}2}-3\right]
+W_{vol}(\det F),
\end{equation}
where $F$ is any matrix in $\MMM$ such that $\det F>0$, and $W_{vol}$ is defined as 
$$
W_{vol}(t)=c(t^2-1-2\log t),\qquad t>0,
$$
$c$ being a given positive constant.
As its incompressible version, the energy density (\ref{Wnc}) is always nonnegative and it is
equal to 0 if and only if $FF^T=L_n$ (see \cite{AgDe1}, \cite{AgDe2}, and \cite{Ant} for more details).
We denote by $W_{n,\ep}^c$ the expression obtained from (\ref{Wnc}) replacing $L_n$ by $L_{n,\ep}$.
The linearization of the model is then given by
$$
V_n^c(E):=\lim_{\ep\to0}\frac1{\ep^2}W_{n,\ep}^c(I+\ep E),\qquad E\in Sym(3).
$$
Writing $W_{n,\ep}^c(F)=\tilde W_{n,\ep}^c(FF^T)$ due to frame indifference,
and using the fact that $\tilde W_{n,\ep}^c$ is minimized at $L_{n,\ep}$, it is easy to see that
$$
V_n^c(E)=2D^2\tilde W_{n,0}^c(I)[E-U_n]^2=
\frac12\sum_{i=1}^Nc_i\gamma_i|E-U_n|^2
+\left(-\frac16\sum_{i=1}^Nc_i\gamma_i+2c\right)\tr^2E,
$$
for every $E\in Sym(3)$, where $D^2\tilde W_{n,0}^c(I)[E-U_n]^2$ is the 
second differential of $\tilde W_{n,0}^c$
at $I$ applied to $(E-U_n)$ twice. The purely mechanical
response of the system in the small strain limit is defined by $\min_{n\in S^2}V_n^c(E)$.
Up to a multiplicative constant, this last expression gives precisely the function $V$ defined in (\ref{en_V_dim2}),
for every $E\in Sym_0(3)$. 

Since
\begin{eqnarray*}
V(E)&=&\min_{n\in S^2}\left(|E|^2+|U_n|^2-2E\cdot U_n\right)\\
    &=&|E|^2+\frac32\tr E-3\max_{n\in S^2}(En)\cdot n,
\end{eqnarray*}
if $\mu_1(E)\leq\mu_2(E)\leq\mu_3(E)$ are the ordered eigenvalues of $E$,
then $V(E)$ can be rewritten as
$$
V(E)=\left(\mu_1(E)+\frac12\right)^2+\left(\mu_2(E)+\frac12\right)^2+(\mu_3(E)-1)^2,
$$
and the minimum is attained for $n$ parallel to the eigenvector 
of $E$ corresponding to its maximum eigenvalue. The set of wells of $V$ is the set $\hat K_0$
defined in (\ref{nostroK_sym}). 

In order to conform our language to the one used in the engineering literature, 
we remark that an equivalent way to present the small strain theory is
to say that in the small strain regime $|\na u|=\ep$ we have that, modulo terms of order
higher than two in $\ep$,
$$
W(I+\na u)=\mu\min_{n\in S^2}|e(u)-\ep U_n|,
$$
where $W$ is given by (\ref{inco_intro})-(\ref{W_come_min}) and $\mu$ is a function of the
constants appearing in (\ref{inco_intro}). 
We have in this case that $W(I+\na u)=0$ (modulo terms of order higher than two in $\ep$) if and only if
the eigenvalues of $e(u)$ are $-\frac{\ep}2$, $-\frac{\ep}2$, and $\ep$. 

We consider the problem of finding a minimizer of the functional
$\int_{\Om}V(e(u))dx$, under a prescribed boundary condition. 
We find solutions by solving the following problem:
given a Dirichlet datum $w$, find $u$ such that
$V(e(u))=0$ a.e.~in $\Om$ satisfying $u=w$ on $\pa\Om$. 
Considering the set $K_0$ defined in (\ref{nostroK}), note that 
if $A\in K_0$ and $w(x)=Ax$, then the affine function $x\mapsto Ax$ is trivially a solution.

Denoting by $(x_1,x_2,x_3)$ the coordinates of a point $x\in\R^3$,
we restrict attention to domains of the type
$\Om=\omega\times(0,1)$, $\omega$ being an open subset of $\R^2$, 
and look for solutions $u$ of the form
\begin{equation}\label{type1}
u(x)=(\tilde u(x_1,x_2),0)+w(x),
\end{equation}
where $\tilde u:\omega\to\R^2$ is such that $\tilde u=0$ on $\pa\Om$, and
\begin{equation}\label{type2}
w(x):=\left(\frac{x_1}4,\frac{x_2}4,-\frac{x_3}2\right).
\end{equation}
This choice ensures that $e_{33}(u)$ is constantly equal to $-1/2$ and that the minima of the two-dimensional theory
represent minima of the three-dimensional theory as well (see \cite{CeDe2}, where a similar point of  view is adopted). 
In particular, we have that
\begin{equation}\label{pb2rid}
\int_{\Om}V(e(u))dx=\int_{\omega}V
\left(
\left[
\begin{array}{ccc}
\partial_{x_1}\tilde u_1+\frac14 & \frac{\partial_{x_1}\tilde u_2+\partial_{x_2}\tilde u_1}2 & 0 \\
\frac{\partial_{x_1}\tilde u_2+\partial_{x_2}\tilde u_1}2 & \partial_{x_2}\tilde u_2+\frac14 & 0 \\
0 & 0 & -\frac12
\end{array}
\right]
\right)dx_1dx_2.
\end{equation}
This preliminary remark leads to the following theorem.

\begin{theorem}\label{slu_rot}
Given $r>0$, the function
\begin{equation*}
u(x)=\pm\frac3{4}\left[\log\left(\frac{x_1^2+x_2^2}{r^2}\right)\right](-x_2,x_1,0)+w(x)
\end{equation*}
satisfies
\begin{equation}\label{pb_cilindro}
\left\{
\begin{array}{ll}
V(e(u))=0 & \mbox{in }B(0,r)\times\R,\\
u=w & \mbox{on }\partial(B(0,r)\times\R),
\end{array}
\right.
\end{equation}
and belongs to $W^{1,p}_{loc}(B(0,r)\times\R)$, for every $1\leq p<\infty$.
\end{theorem}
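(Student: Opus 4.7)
The plan is to exploit the reduction in (\ref{pb2rid}): since $u$ has the form $(\tilde u(x_1,x_2),0)+w(x)$ with $w$ as in (\ref{type2}), the $e_{33}$ component is automatically $-\tfrac12$ and the $(i,3)$ off--diagonal entries vanish, so it suffices to check that the $2{\times}2$ block
\[
M(x_1,x_2):=\begin{pmatrix}\pa_{x_1}\tilde u_1+\tfrac14 & \tfrac12(\pa_{x_1}\tilde u_2+\pa_{x_2}\tilde u_1)\\[2pt]\tfrac12(\pa_{x_1}\tilde u_2+\pa_{x_2}\tilde u_1)& \pa_{x_2}\tilde u_2+\tfrac14\end{pmatrix}
\]
has eigenvalues $\{1,-\tfrac12\}$ a.e.\ in $B(0,r)$. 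Together with the third eigenvalue $-\tfrac12$ coming from $e_{33}$, this places $e(u)$ in $\hat K_0$ and therefore gives $V(e(u))=0$.

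First I would write $\tilde u(x_1,x_2)=\pm\frac{3}{4}\log\!\bigl(\rho^2/r^2\bigr)(-x_2,x_1)$ with $\rho^2=x_1^2+x_2^2$ and compute $\na\tilde u$ directly. Setting $f:=\pm\frac{3}{4}\log(\rho^2/r^2)$, one finds $\pa_{x_j}f=\pm 3x_j/(2\rho^2)$ and
\[
\pa_{x_1}\tilde u_1=-x_2\pa_{x_1}f,\ \ \pa_{x_2}\tilde u_2=x_1\pa_{x_2}f,\ \ \pa_{x_2}\tilde u_1=-f-x_2\pa_{x_2}f,\ \ \pa_{x_1}\tilde u_2=f+x_1\pa_{x_1}f,
\]
so that the skew parts of $\na\tilde u$ cancel neatly and the symmetric part is the rational tensor
\[
\mathrm{sym}(\na\tilde u)=\pm\frac{3}{2\rho^2}\begin{pmatrix}-x_1 x_2 & \tfrac12(x_1^2-x_2^2)\\ \tfrac12(x_1^2-x_2^2) & x_1 x_2\end{pmatrix}.
\]
Crucially this matrix is traceless (both diagonal entries are $\mp 3x_1x_2/(2\rho^2)$ with opposite signs), and one computes the identity
\[
\frac{9x_1^2x_2^2}{4\rho^4}+\frac{9(x_1^2-x_2^2)^2}{16\rho^4}=\frac{9}{16}.
\]

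The verification of the eigenvalues is then immediate: $\tr M=\tfrac12$ (the $\pm 3x_1x_2/(2\rho^2)$ terms cancel against each other when added to the two $\tfrac14$'s), while $\det M=\tfrac1{16}-\tfrac9{16}=-\tfrac12$ by the identity above. Hence the characteristic polynomial of $M$ is $\mu^2-\tfrac12\mu-\tfrac12=(\mu-1)(\mu+\tfrac12)$, which gives the desired eigenvalues $\{1,-\tfrac12\}$ and shows that $e(u)$ has ordered eigenvalues $-\tfrac12,-\tfrac12,1$, i.e.\ $e(u)\in\hat K_0$.

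The boundary condition is the easiest part: on $\pa B(0,r)\times\R$ one has $\rho=r$, so $\log(\rho^2/r^2)=0$ and $\tilde u\equiv 0$, whence $u=w$. For the $W^{1,p}_{\mathrm{loc}}$ regularity, I would observe that the only unbounded contributions to $\na u$ come from the $f$ terms in $\pa_{x_2}\tilde u_1$ and $\pa_{x_1}\tilde u_2$; the remaining $x_i\pa_{x_j}f$ terms are bounded on $B(0,r)$ since $|x_ix_j|\le\rho^2$. Thus $|\na u|\le C(1+|\log\rho|)$, and for any compact $K\subset B(0,r)\times\R$, $\int_K(1+|\log\rho|)^p\,dx\le C'\int_0^r(1+|\log\rho|)^p\rho\,d\rho<\infty$ for every $p\in[1,\infty)$. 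The main (and only) substantive step is the algebraic identity that forces $\det M=-\tfrac12$; everything else is unpacking definitions.
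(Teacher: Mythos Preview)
Your proof is correct and is precisely the ``direct computation'' the paper invokes: you verify that $\mathrm{sym}(\nabla\tilde u)$ is traceless with $a^2+b^2=9/16$ (equivalently, $\tr M=1/2$ and $\det M=-1/2$), which by (\ref{form1})--(\ref{form2}) places $e(u)$ in $\hat K_0$, and you handle the boundary condition and the $W^{1,p}_{\mathrm{loc}}$ bound via the logarithmic growth of $\nabla u$ exactly as the paper notes in the discussion following the theorem. There is nothing to add.
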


\begin{proof}
The proof is a direct computation.
\end{proof}

It is worth commenting on the steps that led us to the construction of the function $u$
given in Theorem \ref{slu_rot}. To do this, 
let us proceed as anticipated before and look for solutions of type
(\ref{type1})-(\ref{type2}) on $\R\times\omega$. 
We denote by $\tilde u_1$ and $\tilde u_2$ the components of $\tilde u$.
Note that if $E\in Sym_0(3)$ is of the form
\begin{equation}\label{form1}
E=\left[
\begin{array}{ccc}
a+\frac14 & b & 0\\
b & -a+\frac14 & 0\\
0 & 0 & -\frac12
\end{array}
\right],
\end{equation}
then, considering the set $\hat K_0$ of the minimizers of $V$ (see (\ref{nostroK_sym})), it is 
easy to see that
\begin{equation}\label{form2}
V(E)=0\quad\mbox{ if and only if }\quad a^2+b^2=\frac9{16}.
\end{equation}
In view of this and of (\ref{pb2rid}), we look for solutions of the following
nonlinear system of partial differential equations in $\omega$:
\begin{equation}\label{sis_rigido}
\left\{
\begin{array}{l}
\pa_{x_1}\tilde u_1+\pa_{x_2}\tilde u_2=0,\\
(\pa_{x_1}\tilde u_1)^2+\left(\frac{\pa_{x_1}\tilde u_2+\pa_{x_2}\tilde u_1}2\right)^2=\frac9{16}.
\end{array}
\right.
\end{equation}
In order to solve this system, a possible strategy is to choose $\tilde u$ as a $\frac{\pi}2$-(counterclockwise) rotation 
of the gradient of a function $\ffi:\R^2\to\R$, that is 
\begin{equation}\label{ansatz_buona}
\tilde u=(-\pa_{x_2}\ffi,\pa_{x_1}\ffi).
% \qquad\mbox{for some scalar-valued function }\ \varphi.
\end{equation}
This gives automatically ${\rm div}\,\tilde u=0$ and the second equation in (\ref{sis_rigido}) becomes
\begin{equation}\label{eq_rigida}
(\pa^2_{x_1x_2}\varphi)^2+\left(\frac{\pa^2_{x_1^2}\varphi-\pa^2_{x_2^2}\varphi}2\right)^2=\frac9{16}.
\end{equation}
This a fully nonlinear second order partial differential equation for which, to the best of
our knowledge, a general theory is not available.
To find a solution to this equation, we look for solutions
of the form $\varphi(x_1,x_2)=\psi(\rho^2)$, where $\rho:=\sqrt{x_1^2+x_2^2}$.
In this case, equation (\ref{eq_rigida}) becomes an ordinary differential
equation in $\rho^2$:
\begin{equation*}
1=(4x_1x_2\psi'')^2+\left(\frac{4x_1^2\psi''-4x_2^2\psi''}2\right)^2=4\rho^4(\psi'')^2,
\end{equation*}
which gives $\varphi(x_1,x_2)=\psi(\rho^2)=\pm\frac38\left(\rho^2\log\rho^2-1\right)+C_1\rho^2+C_2$.
Plugging this expression in (\ref{ansatz_buona}) and imposing $\tilde u=0$ on $\pa B(0,r)$, we obtain
\begin{equation}\label{u2dim_1}
\tilde u(x_1,x_2)=\pm\frac34\log\left(\frac{x_1^2+x_2^2}{r^2}\right)(-x_2,x_1).
\end{equation}
The function 
\begin{equation}\label{u2dim_2}
u(x):=(\tilde u(x_1,x_2),0)+w(x),
\end{equation} 
where $w$ is defined as in (\ref{type2}),
is then a solution of problem (\ref{pb_cilindro}).

We emphasize that the case of $\Om=\omega\times\R$ with $\omega=B(0,r)$
is very special, leading to the explicit solution $u$ defined in (\ref{u2dim_1})-(\ref{u2dim_2}). 
To find a solution when $\omega$ is not a disk,
the strategy is to express $\omega$ as a disjoint union of a sequence of disks and a null set
(see Remark \ref{remark_Vitali}). 
This method
% , which works for homogeneous boundary conditions, 
does not provide solutions as explicit as those on $B(0,r)\times\R$.
% Clearly, we cannot exclude that other explicit solutions defined on domains with
% some special geometries can be found out. 

Observe that the function $\tilde u$ defined in (\ref{u2dim_1}) 
is of class $C(\overline B(0,r);\R^2)$ and that
\begin{equation*}
\na\tilde u(x_1,x_2)=\pm\frac32
\left[
\begin{array}{cc} 
-\frac{x_1x_2}{\rho^2} & -\log\left(\frac{\rho}r\right)-\frac{x_2^2}{\rho^2} \\
\log\left(\frac{\rho}r\right)+\frac{x_1^2}{\rho^2} & \frac{x_1x_2}{\rho^2}
\end{array}
\right], 
\end{equation*}
so that $\na\tilde u\in C^{\infty}(\overline{B(0,r)}\setminus\{0\};\MM)$.
Moreover, $e(\tilde u)\in L^{\infty}(B(0,r);Sym(2))$, whereas
$\na\tilde u$ is unbounded about the origin.
Nevertheless, $\tilde u\in W^{1,p}(B(0,r);\R^2)$, for every $1\leq p<\infty$.

\begin{remark}\label{remark_Vitali}
\rm
If $\omega$ is an arbitrary open subset of $\R^2$,
by Theorem \ref{Vitali} below there exists a countable collection $\{B_i\}$
of disjoint closed disks in $\omega$ such that
$\left|\omega\setminus\bigcup_{i}B_i\right|=0$.
Let $\xi_i\in\R^2$ and $r_i>0$ be the centre and the radius of the ball $B_i$, respectively.
Considering the function $\tilde u$ defined in (\ref{u2dim_1}), 
the function given by
\begin{equation*}
u^{(i)}(\xi):=r_i\tilde u\left(\frac{\xi-\xi_i}{r_i}\right),\qquad\mbox{for every }\xi\in B_i,
\end{equation*}
satisfies (\ref{sis_rigido}) in $B_i$ and $u^{(i)}=0$ on $\pa B_i$.
Now, define
\begin{equation*}
\tilde v:=
\left\{
\begin{array}{ll}
0 & \mbox{on }\displaystyle\omega\setminus\bigcup_{i}B_i,\\
u^{(i)} & \mbox{on }B_i,\mbox{ for every }i.\\
\end{array}
\right.
\end{equation*}
This function is a solution to problem (\ref{sis_rigido}) in $\omega$. To see this, let us introduce the functions
\begin{equation*}
\tilde v^{(k)}:=
\left\{
\begin{array}{ll}
0 & \mbox{on }\displaystyle\omega\setminus\bigcup_{i=1}^kB_i,\\
u^{(i)} & \mbox{on }B_i,\mbox{ for every }i=1,...,k.\\
\end{array}
\right.
\end{equation*}
Extending each $u^{(i)}$ at zero outside $B_i$, we can also write
$\tilde v=\sum_iu^{(i)}$ and $v^{(k)}=\sum_{i=1}^ku^{(i)}$, so that 
\begin{equation}\label{pointwise_minimi}
\tilde v^{(k)}(x)\to\tilde v(x),\quad\mbox{ as }k\to\infty,\qquad\mbox{for every }x\in\omega.
\end{equation}
Since $|e(\tilde v^{(k)})|\leq3/(2\sqrt 2)$ a.e.~in $\omega$, we have that
the sequence $\{\tilde v^{(k)}\}$ is bounded in $W_0^{1,p}(\omega;\R^2)$, for every $1<p<\infty$,
by Korn's inequality. This fact, together with the 
pointwise convergence (\ref{pointwise_minimi})
gives that $\tilde v\in W_0^{1,p}(\omega;\R^2)$, for every $1<p<\infty$. 
% Cvg. puntuale e $u_k$ equilip. da' che $u$ e' lip. con la stessa cost. di lip.
% Usando poi Ascoli Arzela' ho anche che la convergenza e' uniforme su $\overline\Om$
% e quindi anche che $u$ e' nulla al bordo.
Finally, since $\tilde v$ satisfies (\ref{sis_rigido}) a.e.~in each $B_i$ and $\left|\omega\setminus\bigcup_iB_i\right|=0$, we conclude that $\tilde v$ satisfies (\ref{sis_rigido}) a.e.~in $\omega$.
Therefore, we have obtained that the function
\begin{equation*}
v(x):=(\tilde v(x_1,x_2),0)+w(x)
\end{equation*}
is a $W_{loc}^{1,p}(\omega\times\R;\R^3)$ solution, for every $1\leq p<\infty$, of the problem   
\begin{equation*}
\left\{
\begin{array}{ll}
V(e(v))=0 & \mbox{in }\omega\times\R,\\
v=w & \mbox{on }\partial(\omega\times\R).
\end{array}
\right.
\end{equation*}
\end{remark}

We recall the following fundamental corollary of Vitali's Covering Theorem, 
which is also useful in Section \ref{dim_teo_gen}.
We refer the reader to \cite{DaMa} for its proof.

\begin{theorem}[Corollary of Vitali's Covering Theorem]\label{Vitali}
Let $\Om\subseteq\R^N$ be an open set and $G\subseteq\R^N$ a compact set with $|G|>0$.
Let $\mathscr G$ be a family of translated and dilated sets of $G$ such that for almost every
$x\in\Om$ and $\ep>0$ there exists $\hat G\in\mathscr G$ with {\rm diam}$\,\hat G<\ep$ and $x\in\hat G$.
Then, there exists a countable subset $\{G_k\}\subseteq\mathscr G$ such that
\begin{equation*}
\bigcup_kG_k\subseteq\Om,\ \ \ G_k\cap G_h=\emptyset\ \mbox{ for every }k\neq h,\ \ \ 
\left|\Om\setminus\bigcup_kG_k\right|=0.
\end{equation*}
\end{theorem}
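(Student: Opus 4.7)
The plan is a standard greedy/exhaustion argument, exploiting the rigidity of the family $\mathscr G$: every $\hat G\in\mathscr G$ has the form $x_0+rG$, so $\mathrm{diam}\,\hat G=r\,\mathrm{diam}\,G$ and $|\hat G|=r^N|G|$, giving the constant shape ratio $\kappa_G:=|G|/(\mathrm{diam}\,G)^N>0$ for every member. I would first reduce to the case $|\Om|<\infty$ by writing $\Om$ as a countable disjoint union of bounded open subsets plus a null set (for instance via a dyadic decomposition) and treating each piece separately. The fine-cover hypothesis restricts unchanged to any bounded open subset $\Om'\subset\Om$, since for small enough diameter a fine-cover set through $x\in\Om'$ is automatically contained in $\Om'$. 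The countable disjoint subfamilies produced on the individual pieces are then amalgamated into a single one.

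Assuming $\Om$ bounded, I would run the following greedy selection. Set $\mathscr G_1:=\{\hat G\in\mathscr G:\hat G\subseteq\Om\}$, $\delta_1:=\sup\{\mathrm{diam}\,\hat G:\hat G\in\mathscr G_1\}<\infty$, and pick $G_1\in\mathscr G_1$ with $\mathrm{diam}\,G_1>\delta_1/2$. Having chosen $G_1,\dots,G_k$, let $\mathscr G_{k+1}$ be the subfamily of $\mathscr G_1$ disjoint from all of them, let $\delta_{k+1}:=\sup\{\mathrm{diam}\,\hat G:\hat G\in\mathscr G_{k+1}\}$, and pick $G_{k+1}\in\mathscr G_{k+1}$ with $\mathrm{diam}\,G_{k+1}>\delta_{k+1}/2$ (stop if $\mathscr G_{k+1}=\emptyset$). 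The selected $G_k$ are pairwise disjoint in $\Om$, and the shape identity gives
$$
\sum_k(\mathrm{diam}\,G_k)^N=\kappa_G^{-1}\sum_k|G_k|\leq\kappa_G^{-1}|\Om|<\infty,
$$
so $\mathrm{diam}\,G_k\to 0$ and, by the selection rule $\delta_{k+1}<2\,\mathrm{diam}\,G_{k+1}$, also $\delta_k\to 0$.

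To finish, let $E$ be the null set where the fine-cover hypothesis fails, and fix $x\in\Om\setminus(E\cup\bigcup_k G_k)$ together with $K\in\N$. Since $G_1,\dots,G_K$ are compact and $\Om$ is open, $x$ sits at positive distance from both $\bigcup_{j\leq K}G_j$ and $\pa\Om$, so the fine-cover hypothesis provides some $\hat G\ni x$ of arbitrarily small diameter lying in $\mathscr G_{K+1}$. If $\hat G$ were disjoint from every $G_k$ then $\mathrm{diam}\,\hat G\leq\delta_k\to 0$, contradicting $\mathrm{diam}\,\hat G>0$; hence $\hat G$ meets some $G_{k^*}$ with $k^*>K$, and $\mathrm{diam}\,\hat G\leq\delta_{k^*}<2\,\mathrm{diam}\,G_{k^*}$. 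Picking any $z_k\in G_k$, the triangle inequality yields $x\in B(z_{k^*},3\,\mathrm{diam}\,G_{k^*})=:G_{k^*}^\ast$. Thus $\Om\setminus(E\cup\bigcup_k G_k)\subseteq\bigcup_{k>K}G_k^\ast$ for every $K$, and because $|G_k^\ast|\leq C_N\kappa_G^{-1}|G_k|$ with $\sum_k|G_k|<\infty$, the tails vanish, giving $|\Om\setminus\bigcup_k G_k|=0$.

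The only mildly delicate point is the simultaneous control in the last step: choosing $\hat G$ small, contained in $\Om$, and disjoint from the finitely many compact sets $G_1,\dots,G_K$. This is precisely where compactness of each $G_j$ and openness of $\Om$ are essential; everything else is greedy bookkeeping and dimensional constants.
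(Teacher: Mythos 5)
Your argument is correct. Note that the paper does not actually prove Theorem \ref{Vitali}: it only refers the reader to \cite{DaMa}, so there is no in-paper proof to compare against. What you have written is the classical Vitali covering argument (greedy selection of pairwise disjoint members of nearly maximal diameter, the $3\,\mathrm{diam}$ enlargement, and the summable-tail estimate), adapted to arbitrary homothetic copies of a fixed compact set $G$ through the constant shape ratio $\kappa_G=|G|/(\mathrm{diam}\,G)^N$, which is precisely the regularity property the standard proof requires; this is essentially the proof in the cited reference, so the added value of your write-up is self-containedness. Two small points would make it airtight: (i) when you conclude $\mathrm{diam}\,\hat G\le\delta_{k^*}$ you should take $k^*$ to be the \emph{smallest} index with $\hat G\cap G_{k^*}\neq\emptyset$, so that $\hat G$ is disjoint from $G_1,\dots,G_{k^*-1}$ and hence belongs to $\mathscr G_{k^*}$ (that $k^*>K$ follows since $\hat G\in\mathscr G_{K+1}$); (ii) the case in which the greedy selection terminates because some $\mathscr G_{K_0+1}=\emptyset$ is not covered by the ``$\delta_k\to0$'' dichotomy, but it is settled by the same distance argument: an a.e.\ point of the open set $\Om\setminus\bigcup_{k\le K_0}G_k$ would yield a member of $\mathscr G$ of small diameter contained in $\Om$ and disjoint from $G_1,\dots,G_{K_0}$, contradicting emptiness, so the leftover set is already null. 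The preliminary reduction to bounded pieces and the amalgamation of the subfamilies are fine as written.
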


%%%%%%%%%%%%%%%%%%%%%%%%
%%%%%%%%%%%%%%%%%%%%%%%%

\section{Convex integration applied to nematic elastomers}\label{attainment_nonlinear}

%%%%%%%%%%%%%%%%%%%%%%%%
%%%%%%%%%%%%%%%%%%%%%%%%

%Lo spazio naturale per ambientare questi problemi di minimo e' $W^{1,2}$ perche' la corecivita' dei
%funzionali e' in quello spazio: crescita quadratica dal basso.
%Quando, altrove, nel caso comprimibile, la crescita e' di tipo $p$, si usa $W^{1,p}$

In this section $\Om$ is a bounded and Lipschitz domain of $\R^3$.
The following notion is crucial in the sequel.
 
\begin{defi}\label{piecewiseaffine}
A map $y:\Om\to\R^m$ is {\rm piecewise affine} if it is continuous and there exist countably many mutually
disjoint Lipschitz domains $\Om_i\subseteq\Om$ such that
\begin{equation*}
y_{|\Om_i}\quad\mbox{is affine}\qquad\mbox{and}\qquad\left|\Om\setminus\bigcup_i\Om_i\right|=0.
\end{equation*}
\end{defi}
Note that for every piecewise affine function $y$, the pointwise gradient $\na y(x)$ is defined for a.e.~$x$,
but it may happen that $y\notin W^{1,1}$, even when $\na y$ is bounded.
For instance, in dimension one, the \textit{Cantor-Vitali}
function is piecewise affine according to the previous definition. 

%%%%%%%%%%%%%%%%%%%%%%%%
%%%%%%%%%%%%%%%%%%%%%%%%

\subsection{The nonlinear case}

%%%%%%%%%%%%%%%%%%%%%%%%
%%%%%%%%%%%%%%%%%%%%%%%%

We consider the following problem: find a minimizer of $\int_{\Om}W(\na y)dx$, where
$W$ is defined in (\ref{en_W_minimi_intro}), under a prescribed boundary condition. 
We obtain a solution of this problem if we solve the following: given a Dirichlet datum $v$, find $y$ such that
$W(\na y)=0$ a.e.~in $\Om$ and satisfying $y=v$ on $\pa\Om$.
To state and then prove the following theorem,
let us introduce the set
$$
\Sigma:=\{F\in\MMM:\det F=1\},
$$
and recall that we denote by $0<\lambda_1(F)\leq\lambda_2(F)\leq\lambda_3(F)$ 
the ordered singular values of $F\in\Sigma$. We also use the notation 
$\Lambda(F):=\{\lambda_1(F),\lambda_2(F),\lambda_3(F)\}$.

\begin{theorem}\label{teoatteso_nonlinear}
Consider a piecewise affine Lipschitz map $v:\Om\to\R^3$ such that 
\begin{equation}\label{cond_nonlinear}
\na v\in\Sigma\quad\mbox{a.e.~in }\Om,\qquad
{\rm ess\,inf}_{\Om}\,\lambda_1(\na v)>\mathsf e_1,\qquad
{\rm ess\,sup}_{\Om}\,\lambda_3(\na v)<\mathsf e_3.
\end{equation}
Then, for every $\ep>0$ there exists $y_{\ep}:\Om\to\R^3$ Lipschitz such that
\begin{equation*}
W(\na y_{\ep})=0\quad\mbox{a.e.~in }\Om,\qquad y_{\ep}=v\quad\mbox{on }\pa\Om,
% \int_{\Om}W(\na y_{\ep})dx=
% \min_{y\in v+W_0^{1,2}}\int_{\Om}W(\na y)dx=0,
\end{equation*}
and $||y_{\ep}-v||_{\infty}\leq\ep$.
The same result holds if $v\in C^{1,\alpha}(\overline{\Om};\R^3)$, for some $0<\alpha<1$,
and satisfies (\ref{cond_nonlinear}).
\end{theorem}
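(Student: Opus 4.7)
The plan is to recast the theorem as a first-order partial differential relation on the constraint manifold $\Sigma=\{F:\det F=1\}$, namely $\nabla y\in K$ a.e.\ with $y=v$ on $\partial\Omega$, and to apply the convex-integration machinery of \cite{MuSv2} where the minor $M(F)=\det F$ is frozen at $t=1$. The only nontrivial task is to produce an \emph{in-approximation} of $K$ inside $\Sigma$, i.e.\ a nested sequence $\{U_k\}$ of sets relatively open in $\Sigma$, uniformly bounded, with $\mathrm{dist}(U_k,K)\to 0$ and $U_k$ contained in the relative interior (within $\Sigma$) of the lamination-convex hull of $U_{k+1}$. Once this is in place, the Baire-category argument of \cite{MuSv2} applied to the complete metric space of Lipschitz maps with boundary trace $v$ and gradient in $\overline{U_{k_0}}$ yields a residual set of solutions with $\nabla y\in K$ a.e., among which one can pick $y_\varepsilon$ with $\|y_\varepsilon-v\|_\infty\leq\varepsilon$.

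First I would fix the in-approximation as
\[
U_k:=\bigl\{F\in\Sigma\,:\,\mathsf e_1-\delta_k<\lambda_1(F),\ \lambda_3(F)<\mathsf e_3+\delta_k,\ \Lambda(F)\not\subset B_{\eta_k}(\mathsf e_i)\text{ for some }i\bigr\}
\]
for a suitable pair of monotone null sequences $\delta_k,\eta_k$, intersected with a large ball to guarantee boundedness. Relative openness in $\Sigma$ follows from continuity of singular values. The hypotheses \eqref{cond_nonlinear} are precisely what is needed to place $\nabla v\in U_{k_0}$ for some $k_0$, so $v$ is an admissible starting datum. The main work is the rank-one hull property: given $F\in U_k$, I must produce rank-one segments through $F$ lying in $\Sigma$ whose endpoints sit in $U_{k+1}$, and iterate. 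Writing $F=R\,\mathrm{diag}(\lambda_1,\lambda_2,\lambda_3)\,S$ via SVD, one performs planar laminations in each coordinate two-plane that alter two singular values while preserving their product (hence $\det=1$); this is a classical construction for two-well problems on a hyperbola. By iterating such laminations one can drive the ordered singular values arbitrarily close to $(\mathsf e_1,\mathsf e_2,\mathsf e_3)$ while remaining strictly inside the open constraints of $U_{k+1}$.

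The main obstacle is exactly this combinatorial-geometric step: verifying that rank-one laminations in $\Sigma$ produce genuine relative interior points of the lamination-convex hull of $U_{k+1}$ containing $U_k$, rather than just boundary points, uniformly over $F\in U_k$. The hypothesis $\mathsf e_1<\mathsf e_3$ enters crucially here: it provides the nontrivial gap that allows rank-one connections on the hyperboloid $\det F=1$ between matrices with distinct singular-value patterns. One must also treat the three possible orderings of the perturbed singular values and verify that the rearrangement does not push one outside $\Sigma$ or $U_{k+1}$; the ordering constraint $\lambda_1\le\lambda_2\le\lambda_3$ forces splitting the iterative construction into finitely many cases according to which pair of singular values is being equalized.

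Finally, with the in-approximation at hand, the piecewise affine case follows directly from \cite[Thm.\ 1.3]{MuSv2} since $v$ is piecewise affine with gradient in $U_{k_0}$. For the $C^{1,\alpha}$ case one uses the standard device of approximating $v$ by piecewise affine maps: partition $\Omega$ by a Vitali-type covering (cf.\ Theorem \ref{Vitali}) into small pieces on which $\nabla v$ is nearly constant, replace $v$ by its piecewise affine interpolant $v_n$ coinciding with $v$ on $\partial\Omega$ and still satisfying \eqref{cond_nonlinear} thanks to the strict gap $\lambda_1(\nabla v)>\mathsf e_1$, $\lambda_3(\nabla v)<\mathsf e_3$, apply the piecewise affine result to $v_n$, and use the $L^\infty$ closeness to conclude.
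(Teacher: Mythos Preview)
Your overall strategy coincides with the paper's: reduce to $\nabla y\in K$ a.e.\ and invoke the convex-integration theorem of M\"uller and \u Sver\'ak (stated here as Theorem~\ref{generici_nonlinear}), so that the only substantive task is to build an in-approximation $\{U_k\}$ of $K$ inside $\Sigma$. However, your concrete candidate for $U_k$ does not work.

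First, the inequalities $\mathsf e_1-\delta_k<\lambda_1(F)$ and $\lambda_3(F)<\mathsf e_3+\delta_k$ point in the wrong direction. Lamination within $\Sigma$ cannot push the smallest singular value below the infimum of $\lambda_1$ over the set being laminated; hence any $F\in U_k$ with $\lambda_1(F)$ near $\mathsf e_1-\delta_k<\mathsf e_1-\delta_{k+1}$ lies outside $U_{k+1}^{lc}$, and condition~(1) of Definition~\ref{inapprox_per_nonlinear} fails. Second, condition~(3) fails as well: if $1$ is bounded away from each $\mathsf e_i$ (which is generic under $\mathsf e_1<\mathsf e_3$), take $F_k\to I$ with all singular values close to $1$; for small $\eta_k$ every such $F_k$ satisfies your exclusion clause and the singular-value bounds, so $F_k\in U_k$, yet $I\notin K$. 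An in-approximation must consist of \emph{shells converging to $K$ from inside $K^{lc}$}, not of enlarged neighborhoods punctured near $K$.

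The paper fixes this by taking, in each of three cases according to the degeneracy pattern of $(\mathsf e_1,\mathsf e_2,\mathsf e_3)$, annular constraints such as $\lambda_1,\lambda_2\in(\eta_i,\eta_{i-1})$, $\lambda_3\in(1/\eta_{i-1}^2,1/\eta_i^2)$ with $\eta_i\downarrow\mathsf e_1$. The inclusion $U_i\subseteq U_{i+1}^{lc}$ is then obtained not by ad hoc planar laminations but by quoting the explicit lamination-hull formula of DeSimone and Dolzmann \cite{DeDo2} (Theorem~\ref{envelopes_De_Do}): for each $\alpha$ one has
\[
\bigl\{F\in\Sigma:\lambda_1(F)=\lambda_2(F)=\alpha,\ \lambda_3(F)=\alpha^{-2}\bigr\}^{lc}
=\bigl\{F\in\Sigma:\Lambda(F)\subset[\alpha,\alpha^{-2}]\bigr\},
\]
from which the nesting is immediate. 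This ready-made identity is the shortcut you are missing; your proposed direct verification via two-plane laminations would, if carried out, essentially reproduce the DeSimone--Dolzmann computation. Finally, for the $C^{1,\alpha}$ case no separate piecewise-affine approximation argument is needed here: that clause is already part of Theorem~\ref{generici_nonlinear}, for which the paper simply refers to \cite{MuSv2}.
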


Theorem \ref{teoatteso_nonlinear} says that there exists a numerous set of minimizers 
of the energy (at the level zero).
In these circumstances, the study of an appropriate dynamic model as a method to select minimizers,
in the spirit, e.g.,  of \cite{Bal_Hol} and \cite{Fri_McL}, would be of great interest, 
and we hope to address it in future work.

We recall that a set $U\subseteq\Mmn$ is \emph{lamination convex} if
\begin{equation*}
(1-\lambda)A+\lambda B\in U
\end{equation*}
for every $\lambda\in(0,1)$ and every $A$, $B\in U$ such that {\rm rank}$(A-B)=1$.
The \textit{lamination convex hull} $U^{lc}$ is defined as the smallest lamination convex set which contains $U$.
We also recall (see \cite[Proposition 3.1]{MuSv}) that the lamination convex hull of $U$ 
can be obtained by successively adding rank-one segments, that is
\begin{equation}\label{U^lc}
U^{lc}=\bigcup_{k=0}^{\infty}U^{(k)},
\end{equation}
where $U^{(0)}:=U$ and
\begin{equation}\label{lc_recurs}
U^{(k+1)}:=\{(1-\lambda)A+\lambda B\,:\,A,B\in U^{(k)},\ 
0\leq\lambda\leq1,\ {\rm rank}(A-B)=1\}.
\end{equation}
% Moreover, if $U$ is open, then the sets $U^{(k)}$'s are open.

We remark that the constraint $\Sigma$ is stable under 
lamination, that is if $U\subseteq\Sigma$, then $U^{lc}\subseteq\Sigma$. 
Indeed, if $A$, $B\in\Sigma$ are such that $\rank(A-B)=1$, we can write $A=B+a{\otimes}b$ for 
some vectors $a$, $b$. Thus
$$
1=\det(B^{-1}A)=\det[I+(B^{-1}a){\otimes}b]=1+(B^{-1}a)\cdot b,
$$ 
in view of the fact that $\det[(B^{-1}a){\otimes}b]=0$ and ${\rm Cof\,}[(B^{-1}a){\otimes}b]=0$.
Therefore, we have that $(B^{-1}a)\cdot b=0$ and in turn that
$$
\det[\lambda A+(1-\lambda)B]=\det B\det[I+\lambda(B^{-1}a){\otimes}b]=1,
$$
for every $\lambda\in(0,1)$.

To prove Theorem \ref{teoatteso_nonlinear}, we use the following definition.

\begin{defi}\label{inapprox_per_nonlinear}
Consider $K\subseteq\Sigma$. A sequence of sets $\{U_i\}\subseteq\Sigma$,
where $U_i$ is open in $\Sigma$ for every $i$, 
is an {\rm in-approximation} of $K$ if the following three conditions are satisfied.
\begin{itemize}
\item[{\rm (1)}] $U_i\subseteq U_{i+1}^{lc}$,
\item[{\rm (2)}] $\{U_i\}$ is bounded,
\item[{\rm (3)}] for every subsequence $\{U_{i_k}\}$ of $\{U_i\}$, if $F_{i_k}\in U_{i_k}$ 
and $F_{i_k}\to F$ as $k\to\infty$, then $F\in K$.
\end{itemize}
\end{defi}

We remark that in the literature the third condition in the above definition is stated in the slightly different way:
\begin{equation}\label{weakercond}
\mbox{if}\quad F_i\in U_i\quad\mbox{and}\quad F_i\to F\quad\mbox{as}\quad i\to\infty,\quad\mbox{then}\quad F\in K.
\end{equation}
Note that this condition is not inherited by subsequences, therefore it does not imply
condition (3) of Definition \ref{inapprox_per_nonlinear}.
To see this fact, we can consider an example of in-approximation $\{U_i\}$ 
according to Definition \ref{inapprox_per_nonlinear} 
with the additional property that
\begin{equation}
\label{eq:contro0}
K\mbox{ is disjoint from }U:=\bigcup_{i=1}^{\infty}U_i^{lc},
\end{equation}   
as in the proof of Theorem \ref{teoatteso_nonlinear} below.
We then define $V_i:=U_i$ if $i$ is even, and $V_i:=U_i^{lc}$ if $i$ is odd.
It is easy to see that $\{V_i\}$ satisfies properties (1) and (2) of Definition \ref{inapprox_per_nonlinear}.
It satisfies also (\ref{weakercond}) because if $F_i\in V_i$, then in particular $F_{2i}\in U_{2i}$
and therefore property (3) of Definition \ref{inapprox_per_nonlinear} 
for $\{U_i\}$ implies that $F\in K$.
To see that property (3) does not hold for $\{V_i\}$, fix $G\in U_1^{lc}$ and define $G_{2i+1}=G$ for every $i$.
Then $G_{2i+1}\in U_1^{lc}\subseteq U_{2i+1}^{lc}$ and $G_{2i+1}\to G$, but $G\notin K$, in view of (\ref{eq:contro0}).

Property (3) rather than (\ref{weakercond}) is the crucial one 
in the proof of the following result, which is used to prove Theorem \ref{teoatteso_nonlinear}.

\begin{theorem}\label{generici_nonlinear}
Suppose that $K\subseteq\Sigma$ admits an in-approximation $\{U_i\}$
in the sense of Definition \ref{inapprox_per_nonlinear}. 
Suppose that $v:\Om\to\R^3$ is piecewise affine, Lipschitz, and such that
\begin{equation}\label{na_v_in_U1}
\na v\in U_1\quad\mbox{a.e.~in }\Om.
\end{equation}
Then, for every $\ep>0$ there exists a Lipschitz map $y_{\ep}:\Om\to\R^3$ such that
\begin{itemize}
\item[(i)] $\na y_{\ep}\in K$ a.e.~in $\Om$,
\item[(ii)] $y_{\ep}=v$ on $\pa\Om$,
\item[(iii)] $||y_{\ep}-v||_{{\infty}}\leq\ep$.
\end{itemize}
The same result holds if $v\in C^{1,\alpha}(\overline{\Om};\R^3)$, for some $0<\alpha<1$,
and satisfies (\ref{na_v_in_U1}).
\end{theorem}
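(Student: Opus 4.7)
The plan is to adapt the convex integration scheme of M\"uller and \u Sver\'ak \cite{MuSv2} to the manifold $\Sigma=\{\det F=1\}$. I would introduce the set $X_0$ of piecewise affine Lipschitz maps $y:\Om\to\R^3$ with $y=v$ on $\pa\Om$, $\|y-v\|_\infty<\ep$, and $\na y\in U_1$ a.e., and take its closure $X$ in the uniform topology. By property (2) of Definition \ref{inapprox_per_nonlinear}, the gradients of maps in $X_0$ are uniformly bounded, so $X$ is a complete metric space of Lipschitz maps agreeing with $v$ on $\pa\Om$; it is nonempty, since $v\in X_0$ by (\ref{na_v_in_U1}).

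The crucial technical step is a rank-one perturbation lemma: given any $y\in X_0$ with $\na y\in U_i$ on some piece, and any $\delta>0$, one can produce $\tilde y\in X_0$ with $\|\tilde y-y\|_\infty<\delta$ and $\na\tilde y\in U_{i+1}$ on a region of nearly full measure. The construction proceeds on each affine piece where $\na y=A\in U_i\subseteq U_{i+1}^{lc}$: by (\ref{U^lc})--(\ref{lc_recurs}), $A\in U_{i+1}^{(k)}$ for some $k$, hence is realized iteratively as rank-one combinations of elements of $U_{i+1}$. The observation just before Definition \ref{inapprox_per_nonlinear}, that rank-one segments joining matrices of $\Sigma$ remain in $\Sigma$, guarantees that each lamination respects the determinant constraint. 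A nested rescaling of these laminations on progressively thinner strips then yields, on each piece, a piecewise affine replacement with the same boundary values, with gradient in $U_{i+1}$ off a transition set of arbitrarily small measure (absorbed using that $U_{i+1}$ is relatively open in $\Sigma$), and with arbitrarily small $L^\infty$ error.

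Granting this lemma, I would run the classical Baire category argument: the gradient map $\Phi:X\to L^1(\Om;\MMM)$ is of Baire class one in the uniform topology, so its continuity points form a dense $G_\delta$ subset of $X$. At any continuity point $y$ the perturbation lemma prevents $\na y$ from taking values outside $U_i$ on a set of positive measure for any fixed $i$; it follows that $\na y$ is a pointwise a.e.\ limit of gradients belonging to $U_{i_k}$ with $i_k\to\infty$, and condition (3) of Definition \ref{inapprox_per_nonlinear} then yields $\na y\in K$ a.e., establishing (i), while (ii) and (iii) are built into the definition of $X$.

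For the $C^{1,\alpha}$ case, since $\na v$ is continuous and $U_1$ is relatively open in $\Sigma$, I would first approximate $v$ by a piecewise affine Lipschitz map $\bar v$ agreeing with $v$ on $\pa\Om$ with $\|\bar v-v\|_\infty$ and $\|\na\bar v-\na v\|_\infty$ so small that $\na\bar v\in U_1$ a.e., then apply the piecewise affine result to $\bar v$ with a tighter $\ep$. I expect the main obstacle to be the perturbation lemma itself, and specifically its execution intrinsically on the manifold $\Sigma$: the usual lamination construction on $\MMM$ must be rewritten using rank-one segments constrained to $\Sigma$, and the size of the transition regions (where $\na\tilde y$ leaves $U_{i+1}$) must be controlled using both the relative openness of $U_{i+1}$ in $\Sigma$ and the stability of $\Sigma$ under rank-one connections verified in the excerpt.
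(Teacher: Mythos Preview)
The paper does not supply its own proof of this statement; it refers the reader to \cite{MuSv2}, and it reproduces the argument only for the linear-constraint analogue (Theorem \ref{generici}) in Section \ref{dim_teo_gen}. That argument, which is also the one in \cite{MuSv2}, is an \emph{explicit iterative construction}: one applies the open-set result (the analogue of Theorem \ref{aperti}) successively to obtain piecewise affine Lipschitz maps $y_i$ with $\na y_i\in U_i$ a.e., $y_i=v$ on $\pa\Om$, and $\|y_i-y_{i-1}\|_\infty$ summable; a mollification trick then forces $\na y_i\to\na y$ strongly in $L^1$, and property (3) of Definition \ref{inapprox_per_nonlinear} gives $\na y\in K$.

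Your Baire-category route is a genuinely different strategy, and it can be made to work in some convex-integration settings, but as written it has a real gap. You define $X_0$ by the condition $\na y\in U_1$ a.e., yet your perturbation lemma takes a map with gradient in $U_i$ and returns one with gradient (on most of $\Om$) in $U_{i+1}$. In general $U_{i+1}\not\subseteq U_1$; indeed, in the concrete in-approximations constructed later in the paper the sets $U_i$ for $i\ge 2$ are pairwise disjoint annular regions shrinking towards $K$. Hence the perturbed map $\tilde y$ need not belong to $X_0$, and there is no reason it lies in the closure $X$ either. The Baire argument requires the perturbation to stay inside the complete metric space, so the contradiction at a continuity point never gets off the ground. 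Your subsequent claim, that at a continuity point $\na y$ is a pointwise a.e.\ limit of gradients lying in $U_{i_k}$ with $i_k\to\infty$, likewise does not follow: membership in $X$ only provides approximating sequences with gradients in $U_1$, not in the higher $U_i$'s.

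To repair the Baire approach you would at least have to enlarge $X_0$ to allow gradients in $\bigcup_i U_i$ (so that the perturbation stays in the class) and then identify a Baire-one functional on $X$ whose continuity points are forced to satisfy $\na y\in K$; the difficulty is that the ``distance moved'' by one perturbation step, namely $\mathrm{dist}(U_i,U_{i+1})$, tends to zero as $i\to\infty$, so the usual uniform lower bound on $\|\na\tilde y-\na y\|_{L^1}$ is not available. This is precisely why \cite{MuSv2}, and the paper's Appendix, opt for the direct iterative construction combined with the mollification estimate to upgrade uniform convergence of $y_i$ to strong $L^1$ convergence of $\na y_i$.
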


We refer the reader to \cite{MuSv2} for the proof of this theorem,
whose analogue in the case of the linear constraint $\divv u=0$ (in place of $\det\na y=1$)
is shown in Section \ref{dim_teo_gen}.

\begin{proof}[Proof of Theorem \ref{teoatteso_nonlinear}]
Finding $y:\Om\to\R^3$ such that
$W(\na y)=0$ a.e.~in $\Om$ is equivalent to finding $y$ such that $\na y\in K$ a.e.~in $\Om$,
where $K$, defined as in (\ref{wells_W_minimi}), is the set of the wells of $W$.
Thus, to prove the theorem, 
we can directly apply Theorem \ref{generici_nonlinear} showing that
$K$ admits an in-approximation in the sense of Definition \ref{inapprox_per_nonlinear}. 
By assumption (\ref{cond_nonlinear}), it is possible to construct a strictly decreasing sequence
$\{\eta_i\}_{i\geq1}$ such that
\begin{equation}\label{lin_in_approx_0}
\mathsf e_1<\eta_1<{\rm ess\,inf}_{\Om}\,\lambda_1(\na v),\qquad \eta_i\to\mathsf e_1,\quad\mbox{as }i\to\infty.
\end{equation}
To define a suitable in-approximation $\{U_j\}$ we need to distinguish the following three cases. 

\begin{itemize}

\item[(1)] If $\mathsf e_1=\mathsf e_2<\mathsf e_3$,
we note that, up to a smaller $\eta_1$,
\begin{equation*}
{\rm ess\,sup}_{\Om}\,\lambda_3(\na v)<\frac1{\eta_1^2}<\mathsf e_3,\qquad\frac1{\eta_i^2}\to\frac1{\mathsf e_1^2}=\mathsf e_3,
\quad\mbox{as }i\to\infty. 
\end{equation*}
Hence, defining
\begin{equation*}%\label{def_U_1}
U_1:=\left\{F\in\Sigma\,:\,\Lambda(F)\subset\left(\eta_1,\frac1{\eta_1^2}\right)\right\},
\end{equation*}
we have that $\na v\in U_1$ a.e.~in $\Om$, also in view of (\ref{lin_in_approx_0}).
We then define for $i\geq2$
\begin{equation*}
U_i:=\left\{F\in\Sigma\,:\,\lambda_1(F),\lambda_2(F)\in(\eta_i,\eta_{i-1}),\,
\lambda_3(F)\in\left(\frac1{\eta_{i-1}^2},\frac1{\eta_i^2}\right)\right\}.
\end{equation*} 

\item[(2)] If $\mathsf e_1<\mathsf e_2<\mathsf e_3$,
we consider a strictly increasing sequence $\{\teta_i\}\subset(\mathsf e_1,\mathsf e_3)$
such that 
\begin{equation}\label{lin_in_approx_1}
{\rm ess\,sup}_{\Om}\,\lambda_3(\na v)<\teta_1<\mathsf e_3,\qquad\teta_i\to\mathsf e_3,
\quad\mbox{as }i\to\infty, 
\end{equation}
and we define
\begin{equation*}%\label{def_U_1}
U_1:=\left\{F\in\Sigma\,:\,\Lambda(F)\subset\left(\eta_1,\teta_1\right)\right\}.
\end{equation*}
By this definition, from (\ref{lin_in_approx_0}) and (\ref{lin_in_approx_1})
we get that $\na v\in U_1$ a.e.~in $\Om$.
Note that if $F\in\Sigma$, then $\lambda_2(F)=(\lambda_1(F)\lambda_3(F))^{-1}$, so that
$1/(\eta_{i-1}\teta_i)<\lambda_2(F)<1/(\eta_i\teta_{i-1})$, if
$\eta_i<\lambda_1(F)<\eta_{i-1}$ and $\teta_{i-1}<\lambda_3(F)<\teta_i$.
Therefore, we define for $i\geq2$
\begin{equation*}
U_i:=\left\{F\in\Sigma\,:\,\lambda_1(F)\in(\eta_i,\eta_{i-1}),\,
\lambda_2(F)\in\left(\frac1{\eta_{i-1}\teta_i},\frac1{\eta_i\teta_{i-1}}\right),\,
\lambda_3(F)\in(\teta_{i-1},\teta_i)\right\}.
\end{equation*} 

\item[(3)] If $\mathsf e_1<\mathsf e_2=\mathsf e_3$, since in this case $1/\sqrt{\eta_i}\to1/\sqrt{\mathsf e_1}=\mathsf e_3$,
we define
\begin{equation*}
U_1:=\left\{F\in\Sigma\,:\,\Lambda(F)\subset\left(\eta_1,\frac1{\sqrt{\eta_1}}\right)\right\},
\end{equation*}
and for $i\geq2$
\begin{equation*}
U_i:=\left\{F\in\Sigma\,:\,\lambda_1(F)\in(\eta_i,\eta_{i-1}),\,
\lambda_2(F),\lambda_3(F)\in\left(\frac1{\sqrt{\eta_{i-1}}},\frac1{\sqrt{\eta_i}}\right)\right\}.
\end{equation*} 
Also in this case, we have that, up to a smaller $\eta_1$, $\na v\in U_1$ a.e.~in $\Om$.

\end{itemize}

It is clear that in each of these cases $U_i$ is open in $\Sigma$ for every $i\geq1$, that 
$\{U_i\}_{i\geq1}$ is bounded, and that 
if $F_i\in U_i$ and $F_i\to F$, then $\Lambda(F)=\{\mathsf e_1,\mathsf e_2,\mathsf e_3\}$.
Now, let us check that $U_i\subseteq U_{i+1}^{lc}$ for every $i\geq1$.
We note that
\begin{equation}\label{inclusione_hull_nonlinear}
\left\{F\in\Sigma\,:\,\Lambda(F)\subset\left(\eta_{i+1},\frac1{\eta_{i+1}^2}\right)\right\}
\subseteq U_{i+1}^{lc}.
\end{equation}
To see this, let us focus on case (1) (in the other cases, inclusion (\ref{inclusione_hull_nonlinear}) can
be proved similarly). For every $\alpha>\eta_{i+1}$ sufficiently close to
$\eta_{i+1}$, we have that 
$\eta_{i+1}<\alpha<\eta_i$ (and $1/\eta_i^2<1/\alpha^2<1/\eta_{i+1}^2$),
so that
\begin{equation*}
\left\{F\in\Sigma:\lambda_1(F)=\lambda_2(F)=\alpha,\,\lambda_3(F)=\frac1{\alpha^2}\right\}\subset U_{i+1}.
\end{equation*}
Thus,
\begin{equation}\label{minimi_nonli_ineq}
\left\{F\in\Sigma:\Lambda(F)\subset\left[\alpha,\frac1{\alpha^2}\right]\right\}
=\left\{F\in\Sigma:\lambda_1(F)=\lambda_2(F)=\alpha,\,\lambda_3(F)=\frac1{\alpha^2}\right\}^{lc}
\subseteq U_{i+1}^{lc},
\end{equation}
where the first equality is guaranteed by Theorem \ref{envelopes_De_Do} below.
Therefore, since (\ref{minimi_nonli_ineq}) is true for every $\alpha>\eta_{i+1}$ sufficiently close to
$\eta_{i+1}$, inclusion (\ref{inclusione_hull_nonlinear}) follows.
The fact that trivially 
$U_i\subseteq\left\{F\in\Sigma\,:\,\Lambda(F)\subset(\eta_{i+1},1/\eta_{i+1}^2)\right\}$
and (\ref{inclusione_hull_nonlinear}) conclude the proof that condition (1) of Definition \ref{inapprox_per_nonlinear} holds and conclude the proof of the theorem. 
\end{proof}

We refer the reader to \cite{DeDo2} for the proof of the following result, which has been used in the proof of Theorem \ref{teoatteso_nonlinear}.

\begin{theorem}\label{envelopes_De_Do}
Let $K$ be given by (\ref{wells_W_minimi}). We have that
\begin{equation*}
% =K^{rc}=K^{qc}=K^{pc}
K^{lc}=K^{(2)}=\left\{F\in\Sigma\,:\,\Lambda(F)\subset[\mathsf e_1,\mathsf e_3]\right\},
\end{equation*}
where $K^{(2)}$ is the set of second order laminates of $K$.
\end{theorem}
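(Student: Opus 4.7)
The chain $K \subseteq K^{(2)} \subseteq K^{lc}$ is immediate from the inductive definition (\ref{lc_recurs}), so it suffices to establish (a) $K^{lc} \subseteq C$, where $C := \{F \in \Sigma : \Lambda(F) \subset [\mathsf{e}_1, \mathsf{e}_3]\}$, and (b) $C \subseteq K^{(2)}$.

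For (a), the plan is to show that $C$ is itself lamination convex; since $K \subseteq C$ this forces $K^{lc} \subseteq C$. Take $A, B \in C$ with $A - B = a \otimes b$ and set $F_\lambda := (1-\lambda) A + \lambda B$. The paper has already established $\det F_\lambda \equiv 1$, together with the identity $b \cdot B^{-1} a = 0$. By the Sherman--Morrison formula this identity forces
\[
F_\lambda^{-1} \;=\; (1-\lambda) A^{-1} + \lambda B^{-1},
\]
i.e.\ the inverse family is also affine in $\lambda$. The bound $\lambda_3(F_\lambda) \leq \mathsf{e}_3$ then follows from convexity of the operator norm $F \mapsto \|F\|_{\mathrm{op}} = \lambda_3(F)$ applied to the original family, and the bound $\lambda_1(F_\lambda) \geq \mathsf{e}_1$ follows from the same convexity applied to the family $F_\lambda^{-1}$, since $\|F^{-1}\|_{\mathrm{op}} = 1/\lambda_1(F)$.

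For (b) -- the main geometric content -- I would first use the bi-invariance of both $C$ and the iterative construction $K^{(k)}$ under the action $F \mapsto Q_1 F Q_2$ with $Q_i \in SO(3)$ to reduce to the case $F = \mathrm{diag}(\lambda_1, \lambda_2, \lambda_3)$ with $\lambda_1 \lambda_2 \lambda_3 = 1$ and each $\lambda_i \in [\mathsf{e}_1, \mathsf{e}_3]$. Then I would exhibit $F$ as an explicit second-order laminate in two stages. In the first stage, combine pairs of matrices of $K$ which are rank-one connected along a single coordinate direction (such connections exist between suitably chosen elements of $SO(3) \cdot \mathrm{diag}(\mathsf{e}_1, \mathsf{e}_2, \mathsf{e}_3) \cdot SO(3)$) to produce intermediate matrices $P_1, P_2 \in K^{(1)}$ whose two-dimensional restrictions to a fixed coordinate plane parametrize a suitable one-parameter family of deformations of determinant $\mathsf{e}_i \mathsf{e}_j$. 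In the second stage, form a rank-one combination of two such $P_i$'s along the remaining coordinate direction so as to hit the prescribed diagonal $F$. Lamination stability of the constraint $\det F = 1$, already proved in the paper, ensures that the intermediate matrices remain in $\Sigma$ throughout.

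The main obstacle is the explicit algebraic matching in the second stage: identifying, for a given admissible triple $(\lambda_1, \lambda_2, \lambda_3)$, precisely which first-order laminates are rank-one compatible and combine, with the correct convex weight, to $F$. This is a genuinely three-dimensional computation (no analogue is needed in two dimensions, where a single lamination step would suffice), and the construction must be organized so as to cover uniformly the three regimes $\mathsf{e}_1 = \mathsf{e}_2 < \mathsf{e}_3$, $\mathsf{e}_1 < \mathsf{e}_2 < \mathsf{e}_3$, and $\mathsf{e}_1 < \mathsf{e}_2 = \mathsf{e}_3$ that are already visible in the case analysis in the proof of Theorem \ref{teoatteso_nonlinear}. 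This is the heart of the argument and is carried out in detail in \cite{DeDo2}.
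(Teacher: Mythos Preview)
The paper does not prove Theorem~\ref{envelopes_De_Do}; it simply states ``We refer the reader to \cite{DeDo2} for the proof of the following result.'' Your proposal is therefore already more than the paper offers, and it ultimately points to the same reference for the hard direction $C\subseteq K^{(2)}$.

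On the substance of your sketch: the argument for~(a) is correct and rather clean. The Sherman--Morrison step does yield $F_\lambda^{-1}=(1-\lambda)A^{-1}+\lambda B^{-1}$ once $b\cdot B^{-1}a=0$, and then convexity of the operator norm gives both singular-value bounds at once; since $\lambda_1(F_\lambda)\le\lambda_2(F_\lambda)\le\lambda_3(F_\lambda)$, the middle eigenvalue is automatically trapped. This self-contained proof of lamination convexity of $C$ is a genuine addition relative to what appears in the present paper. For~(b) your outline (reduce to diagonal matrices by $SO(3)\times SO(3)$ bi-invariance, then build a two-step rank-one splitting along coordinate directions) is the right shape, and you correctly flag the explicit matching of weights and intermediate matrices across the three regimes as the nontrivial point handled in \cite{DeDo2}. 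There is no gap to report; your plan is sound and consistent with the cited source.
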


%%%%%%%%%%%%%%%%%%%%%%%%
%%%%%%%%%%%%%%%%%%%%%%%%

\subsection{The $2$-dimensional geometrically linear case}

%%%%%%%%%%%%%%%%%%%%%%%%
%%%%%%%%%%%%%%%%%%%%%%%%

As done in Section \ref{attainment_linear}, we restrict attention to displacement vector fields
$u:\Om\to\R^3$ of the form 
\begin{equation*}
u(x_1,x_2,x_3)=(\tilde u(x_1,x_2),0)+w(x_1,x_2,x_3),
\end{equation*}
with $\Om=\omega\times\R$, $\tilde u=(\tilde u_1,\tilde u_2)$, and $w$ given by (\ref{type2}).
As we have already seen, displacements of this form are solution to the problem
\begin{equation*}
V(e(u))=0\quad\mbox{ a.e.~in }\ \Om,\qquad \tilde u=\tilde v\ \mbox{ on }\ \partial\omega,
\end{equation*}
where $V$ is defined in (\ref{en_V_dim2}), if and only if $\tilde u:\omega\to\R^2$ satisfies
\begin{equation}\label{eq:pb_rigido}
\left\{
\begin{array}{l}
\pa_{x_1}\tilde u_1+\pa_{x_2}\tilde u_2=0\quad\mbox{in }\ \omega,\\
(\pa_{x_1}\tilde u_1)^2+\left(\frac{\pa_{x_1}\tilde u_2+\pa_{x_2}\tilde u_1}2\right)^2=\frac9{16}\quad\mbox{in }\ \omega,\\
\tilde u=\tilde v\quad\mbox{on }\ \partial\omega.
\end{array}
\right.
\end{equation}
In Section \ref{attainment_linear} we have provided an explicit solution to
problem (\ref{eq:pb_rigido}) in the case where $\tilde v=0$. Our aim is now to
allow for more general boundary conditions relying on the same techniques 
used for the nonlinear setting in the previous subsection.
Let us do a preliminary observation which is true also in the $3$-dimensional case.
Referring to (\ref{form1})-(\ref{form2}), note first that for any matrix $E\in Sym_0(2)$ represented as 
$
\tilde E=\left[
\begin{array}{cc}
a & b\\
b & -a
\end{array}
\right]
$,
the condition $a^2+b^2=9/16$ is equivalent to 
$\mu_1(\tilde E)=-3/4$, where $\mu_1(\tilde E)$ is the smallest eigenvalue of $\tilde E$.
Defining 
\begin{equation}\label{set_tilde_U}
\tilde U_n:=\frac34(2n\otimes n-I),
\qquad\tilde{\mathcal U}:=\{\tilde U_n:n\in S^1\}=\left\{\tilde E\in Sym_0(2):|\tilde E|=\frac3{2\sqrt 2}\right\},
\end{equation}
and 
\[
\tilde V:Sym_0(2)\to\R,\qquad\quad \tilde V(\tilde E):=
\min_{U\in\tilde{\mathcal U}}|\tilde E-U|^2,
\]
we have that 
$\tilde V(\tilde E)=0$ if and only if $\tilde E\in\tilde{\mathcal U}$.
The results of \cite{Ce} show that the relaxation of the functional 
$\int_{\omega}\tilde V(e(\tilde u))dx$ in the weak sequential topology
of $W^{1,2}(\omega,\R^2)$ is given by $\int_{\omega}\tilde V^{qce}(e(\tilde u))dx$ 
(for every $\tilde u$ such that $\divv\tilde u=0$),
where $\tilde V^{qce}$ is the \textit{quasiconvex envelope on linear strains} of $V$ 
(see \cite{Zhang} for a definition). This is given by 
\begin{equation*}
\tilde V^{qce}(\tilde E)=
% \left\{
% \begin{array}{ll}
% \displaystyle
\min_{Q\in\tilde{\mathcal Q}}|\tilde E-Q|^2,\qquad\tilde E\in Sym_0(2),
% +\infty & \mbox{otherwise},
% \end{array}
% \right.
\end{equation*}
with
\begin{equation*}
\tilde{\mathcal Q}:=
\left\{\tilde E\in Sym_0(2):\mu_1(\tilde E)\geq-\frac34\right\}
=\left\{\tilde E\in Sym_0(2):|\tilde E|\leq\frac3{2\sqrt 2}\right\}.
\end{equation*}
In particular, if $\tilde v\in W^{1,2}(\omega;\R^2)$ is such that 
\begin{equation}\label{sugg1}
\divv\tilde v=0\qquad\mbox{and}\qquad|e(\tilde v)|\leq\frac3{2\sqrt 2}\qquad\mbox{a.e.~in }\Om, 
\end{equation}
then
\begin{equation}\label{sugg2}
\inf_{\tilde u\in\tilde v+W_0^{1,2}}\int_{\omega}\tilde V(e(\tilde u))dx
=\min_{\tilde u\in\tilde v+W_0^{1,2}}\int_{\omega}\tilde V^{qce}(e(\tilde u))dx=0.
\end{equation}
The following theorem tells us that if the second condition in (\ref{sugg1}) is a bit stronger,
then there exist minimizers of the unrelaxed functional too. 
In the remaining part of this subsection we use the notation $u$ and $v$ instead of
the notation $\tilde u$ and $\tilde v$
for  2-dimensional displacement vector fields.

\begin{theorem}\label{teoatteso}
Let $v:\omega\to\R^2$ a piecewise affine Lipschitz map such that 
\begin{equation}\label{cond_teo_linear}
\na v\in\MM_0\quad\mbox{a.e.~in }\ \omega,\qquad
{\rm ess\,sup}_{\omega}|e(v)|<\frac3{2\sqrt 2}.
\end{equation}
Then, for every $\ep>0$ there exists $u_{\ep}\in W^{1,\infty}(\omega;\R^2)$ such that
\begin{equation*}
\tilde V(e(u_{\ep}))=0\quad\mbox{a.e.~in }\ \omega,\qquad u_{\ep}=v\quad\mbox{on }\ \pa\omega,
% \min_{u\in v+W_0^{1,2}}V(e(u))dx=0,
\end{equation*}
and $||u_{\ep}-v||_{\infty}\leq\ep$.
The same result holds if $v\in C^{1,\alpha}(\overline{\omega};\R^2)$, for some $0<\alpha<1$,
and satisfies (\ref{cond_teo_linear}).
\end{theorem}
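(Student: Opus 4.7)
The plan is to adapt the scheme of the proof of Theorem~\ref{teoatteso_nonlinear} to the geometrically linear setting. First I would reformulate: since $\divv u = 0$ is equivalent to $\na u \in \MM_0$, and $\tilde V(e(u)) = 0$ is (under the trace-zero constraint) equivalent to $|e(u)| = r := 3/(2\sqrt 2)$ by the characterization of $\tilde{\mathcal U}$ in \eqref{set_tilde_U}, solving the problem amounts to finding a Lipschitz $u$ with
\[
\na u \in K := \{A \in \MM_0 : |\mathrm{sym}(A)| = r\}\ \text{a.e.~in }\omega,\qquad u = v\ \text{on }\pa\omega.
\]
By the linearized convex integration theorem of Section~\ref{dim_teo_gen} (Theorem~\ref{generici} in the piecewise affine case, Proposition~\ref{perHol} in the $C^{1,\alpha}$ case), it is enough to exhibit an in-approximation of $K$ by bounded sets $\{U_i\}$ open in $\MM_0$, satisfying $U_i \subseteq U_{i+1}^{lc}$ and $\na v \in U_1$; here the lamination hulls are taken inside $\MM_0$, i.e.~using only rank-one directions $a\otimes b$ with $a\perp b$ (the only ones preserving the trace-zero constraint).

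For the in-approximation, set $\rho := \mathrm{ess\,sup}_\omega|e(v)| < r$ and choose a strictly increasing sequence $\rho < \beta_1 < \beta_2 < \dots \uparrow r$ together with $\alpha_i \uparrow r$ satisfying $\alpha_i < \beta_i$ for $i\geq 2$. Pick $N > \|\na v\|_\infty$, and define
\[
U_1 := \{A \in \MM_0 : |\mathrm{sym}(A)| < \beta_1,\ |A| < N\},\qquad U_i := \{A \in \MM_0 : \alpha_i < |\mathrm{sym}(A)| < \beta_i,\ |A| < N_i\}\ \ (i\geq 2),
\]
with constants $N_i \geq N$ to be tuned below. Then $\na v \in U_1$ by construction, each $U_i$ is open and bounded in $\MM_0$, and if $F_{i_k}\in U_{i_k}$ converges to $F$ the sandwich $\alpha_{i_k} < |\mathrm{sym}(F_{i_k})| < \beta_{i_k}$ with $\alpha_{i_k}, \beta_{i_k} \to r$ forces $|\mathrm{sym}(F)|=r$, so $F\in K$ and condition~(3) of Definition~\ref{inapprox_per_nonlinear} holds.

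The main obstacle, playing the role of Theorem~\ref{envelopes_De_Do} in the nonlinear proof, is the nesting $U_i \subseteq U_{i+1}^{lc}$. This rests on a two-dimensional lamination-hull computation specific to the divergence-free setting: for any rank-one connection $A-B = a\otimes b$ in $\MM_0$ one has $a\perp b$ and a direct calculation yields the identity $|\mathrm{sym}(A)-\mathrm{sym}(B)| = |\mathrm{skw}(A)-\mathrm{skw}(B)|$. Thus, given a target sym-part $E^*\in Sym_0(2)$ with $|E^*| < \beta_{i+1}$, one may pick $D\in Sym_0(2)$ perpendicular to $E^*$ with $|D|^2$ chosen so that $|E^*\pm D|^2 = |E^*|^2+|D|^2$ lies in $(\alpha_{i+1}^2,\beta_{i+1}^2)$, and then matching skew parts $s^*J \pm d_s J$ with $|d_s|$ determined by $|D|$ produces two matrices $A,B\in U_{i+1}$ whose midpoint has prescribed sym-part $E^*$ and any desired skew part (within a range). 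Since $\beta_i < \beta_{i+1}$, this first-order lamination already covers $U_i$ in the sym direction, and the associated growth of the skew component is absorbed by a suitable choice of the bounds $N_i$ or, if needed, by a second lamination step (in the spirit of the second-order envelope $K^{(2)}$ of Theorem~\ref{envelopes_De_Do}). Once the nesting is verified, Theorem~\ref{generici} (resp.~Proposition~\ref{perHol}) delivers $u_\ep$ with the three required properties.
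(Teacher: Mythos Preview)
Your overall strategy is correct and matches the paper's: reformulate as a differential inclusion $\na u\in K_0$ in $\MM_0$, build an in-approximation, and invoke Theorem~\ref{generici} (with Proposition~\ref{perHol} for the $C^{1,\alpha}$ case). You also correctly identify the key rank-one identity $|\mathrm{sym}(A-B)|=|\mathrm{skw}(A-B)|$ in $\MM_0$, which in the paper's coordinates \eqref{rappre_matrici_0} is exactly condition~\eqref{rank1}.

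Where you diverge from the paper is in the design of the in-approximation. The paper keeps a \emph{fixed} bound $|a_3|<m$ on the skew part and instead excises two cones $\mathscr C_m$ from the annular shells (see \eqref{nostro_U_1} and Figure~\ref{fig:barattolo}); the point of the cone removal is that near $|a_3|=m$ one is forced to have $|a|$ already close to $3/4$, so the antipodal lamination \eqref{lemmaKm12}--\eqref{lemmaKm3} never pushes $|a_3|$ past $m$. This yields $U_i\subseteq U_{i+1}^{(1)}$ with a uniform bound for free. Your construction, by contrast, lets the skew bound $N_i$ grow along the sequence --- this is precisely the mechanism the paper uses in the \emph{three}-dimensional proof (Theorem~\ref{teoatteso_linear_3dim}), where a summability hypothesis like \eqref{prop_seq_r_i}--\eqref{3dim_stimetta0} guarantees $\sup_i m_i<\infty$.

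So your approach does work, but you must make one point explicit: condition~(2) of Definition~\ref{inapprox_per_nonlinear} requires the whole family $\{U_i\}$ to be bounded, hence $\sup_i N_i<\infty$. Since the skew increment at step $i$ is at most of order $\sqrt{\beta_{i+1}^2-\alpha_i^2}$, you need to choose $\alpha_i,\beta_i\uparrow r$ fast enough that $\sum_i\sqrt{\beta_{i+1}-\alpha_i}<\infty$ and then set $N_{i+1}:=N_i+C\sqrt{\beta_{i+1}^2-\alpha_i^2}$. Without this, the phrase ``absorbed by a suitable choice of the bounds $N_i$'' is a genuine gap. The alternative you mention, ``a second lamination step'', is not needed in two dimensions (first-order laminates already suffice, both in your construction and in the paper's) and would not by itself resolve the boundedness issue.
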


Condition (\ref{sugg1}) and equality (\ref{sugg2}) lead to suppose that the result of
Theorem \ref{teoatteso} can be obtained even with $|e(v)|\leq(3/2\sqrt2)$ a.e.~in $\omega$.
Nevertheless, the proof of Theorem \ref{teoatteso} strongly relies on the open 
relation appearing in (\ref{cond_teo_linear}).
To prove Theorem \ref{teoatteso} we apply Theorem \ref{generici} (restricted to the case $N=2$).
In order to do this, we have to exhibit an in-approximation 
in the sense of Definition \ref{inapprox_per_nonlinear} (with $\MNN_0$ in place of $\Sigma$) of the set
\begin{equation*}
K_0:=\{A\in\MM_0:sym A\in\tilde{\mathcal U}\},
\end{equation*}
where $\tilde{\mathcal U}$ is defined in (\ref{set_tilde_U}).
We then use Proposition \ref{perHol} to extend 
the result to the case  $v\in C^{1,\alpha}(\overline{\omega};\R^2)$.
Representing every $A\in\MM_0$ as
\begin{equation}\label{rappre_matrici_0}
A=sym A+skw A=
\left[\begin{array}{cc}
a_1 & a_2 \\
a_2 & -a_1
\end{array}
\right]+
\left[\begin{array}{cc}
0 & a_3 \\
-a_3 & 0
\end{array}
\right],
\end{equation}
and denoting $a:=(a_1,a_2)$, it is easy to verify that the condition ${\rm rank\,}(A-B)=1$ is equivalent to
\begin{equation}\label{rank1}
(a_3-b_3)^2=|a-b|^2,\qquad\mbox{for every  }A,B\in\MM_0,
\end{equation}
and the set $K_0$ has the equivalent expression
\begin{equation*}
K_0=\left\{A\in\MM_0\,:\,|a|=\frac34\right\}.
\end{equation*}
Since an in-approximation has to be bounded, 
for the following proof it is useful to introduce the sets
\begin{equation*}%\label{Km}
K_0^m:=\{A\in K_0\,:\,|a_3|\leq m\}
%=\left\{A\in\MM_0\,:\,|sym A|=\sqrt2\,,\,|a_3|\leq m\right\},
\end{equation*}
and
\begin{equation*}%\label{Cm}
\mathscr C_m:=\left\{A\in\MM_0\,:\,|a_3|\in\left(m-\frac34,m\right)
\ \ \mbox{and }\ \ |a|<|a_3|-m+\frac34\right\},
\end{equation*}
for some constant $m>\frac34$.
In Figure \ref{fig:barattolo} the set $K_0^m$ ios the region inside the big cylinder and the 
set $\mathscr C_m$ is the region bounded by the two cones.

\begin{proof}[Proof of Theorem \ref{teoatteso}]
Suppose $v:\Om\to\R^2$ to be piecewise affine and Lipschitz. Since by hypothesis
\begin{equation*}
M:={\rm ess\,sup}_{\Om}\frac{|e(v)|}{\sqrt2}<\frac34,
\end{equation*}
by choosing $\max\{3/8,M\}<r_0<3/4$, we have that
\begin{equation}\label{cond_teo_generici}
\na v\in U_1:=\left\{A\in\MM_0\,:\,|a|<r_0\,,\,|a_3|<m\right\}\setminus\overline{\mathscr C_m}
             \quad\mbox{a.e.~in }\Om,
\end{equation}
for some $m>3/4$.
In order to use Theorem \ref{generici}, we construct a suitable in-approximation of   
$K_0^m$ starting from $U_1$. 
We consider a strictly increasing sequence $\{r_i\}_{i\geq1}\subset\R$ 
such that $r_1>r_0$ and $r_i\to(3/4)^-$ as $i\to\infty$, and define
\begin{equation}\label{nostro_U_1}
U_i:=\{A\in\MM_0\,:\,r_{i-1}<|a|<r_i\,,\,|a_3|<m\}\setminus\overline{\mathscr C_m},\qquad i\geq2.
\end{equation}
See Figure \ref{fig:barattolo} for a sketch of the sets $U_i$.
Observe that $\{U_i\}$ is a bounded sequence of sets open in $\MM_0$. 
Also, it is clear from the geometry of these sets that whenever $F_i\in U_i$ and $F_i\to F\in\MM$ as $i\to\infty$,
then $F\in K_0^m$.
It remains to check that the first condition of Definition \ref{inapprox_per_nonlinear} hold.
Consider $C\in U_i$ and suppose for simplicity that $0\leq c_3<m$
(the case $-m<c_3<0$ can be treated in a similar way). 
In particular, we have that $|c|<r_i$ and, if $c_3>m-1$, then  
\begin{equation}\label{lemmaKm11}
|c|\geq c_3-m+\frac34,
\end{equation}
by definition of $\mathscr C_m$.
We have to prove that there exist $A$, $B\in U_{i+1}$ such that
\begin{equation}\label{lemmaKm1}
{\rm rank}(A-B)=1\ \ \mbox{ and }\ \ C=(1-\lambda)A+\lambda B,\quad
\mbox{ for some }0<\lambda<1,
\end{equation}
so that $U_i\subseteq U_{i+1}^{(1)}$ (where $U_{i+1}^{(1)}$ is the set of first order laminates of $U_{i+1}$)
and therefore $U_i\subseteq U_{i+1}^{lc}$, as required. 
% Recall that the condition ${\rm rank}(A-B)=1$ is equivalent to (\ref{rank1}).
We fix $\tilde r\in(r_i,r_{i+1})$ and choose
\begin{equation}\label{lemmaKm12}
a:=\frac{\tilde rc}{|c|},\quad\quad\quad b:=-\frac{\tilde rc}{|c|},
\end{equation}
so that $|a-b|=2\tilde r$ and 
\begin{equation*}
c=(1-\lambda)a+\lambda b,\qquad\mbox{ with }\ \ \lambda:=\frac{\tilde r-|c|}{2\tilde r}.
\end{equation*}
With this choice we have that the second condition in (\ref{lemmaKm1}) 
is realized if and only if
\begin{equation}\label{lemmaKm2}
c_3=(1-\lambda)a_3+\lambda b_3.
\end{equation}
Then, choosing  $a_3=b_3+2\tilde r$, the second condition in (\ref{lemmaKm1}),
which is equivalent to (\ref{rank1}), is satisfied, and (\ref{lemmaKm2}) is equivalent to
\begin{equation}\label{lemmaKm3}
b_3=c_3-|c|-\tilde r,\qquad\quad a_3=c_3-|c|+\tilde r. 
\end{equation}
We have now to check that $A\in U_{i+1}$ 
(the fact that $B\in U_{i+1}$ can be verified equivalently).
The property $r_i<|a|<r_{i+1}$ comes from (\ref{lemmaKm12}) 
and from the choice of $\tilde r$, and $|a_3|<m$ follows from (\ref{lemmaKm3}).
Indeed, (\ref{lemmaKm3}) trivially gives $a_3>0$ (recall that we are supposing $c_3\in[0,m)$),
while $a_3=c_3-|c|+\tilde r<m$ is trivially true if $c_3\in[0,m-3/4]$ and follows from
(\ref{lemmaKm11}) if $c_3\in(m-3/4,m)$.
Now, suppose that $a_3\in(m-3/4,m)$. We have to verify that 
\begin{equation*}
|a|\geq a_3-m+\frac34.
\end{equation*}
But this is equivalent to (\ref{lemmaKm11}), which is true if $c_3\in(m-3/4,m)$
and trivially true if $c_3\in[0,m-3/4]$. 
This concludes the verification of $A\in U_{i+1}$
and the proof of the fact that $U_i\subseteq U_{i+1}^{(1)}$.
Thus, we have constructed an in-approximation $\{U_i\}$ of $K_0^m\subseteq K_0$. Moreover, from (\ref{cond_teo_generici}), $\na v\in U_1$ a.e.~in $\Om$. We can now apply Theorem \ref{generici} to obtain the first part of the theorem. It remains to consider
the case where $v\in C^{1,\alpha}(\overline{\Om};\R^2)$ (and satisfies (\ref{cond_teo_linear})). 
Proposition \ref{perHol} ensures the existence of a piecewise affine Lipschitz function $v_{\delta}:\Om\to\R^2$ such that
$\divv v_{\delta}=0$ a.e.~in $\Om$, $||v_{\delta}-v||_{W^{1,\infty}}\leq\delta$, and
$v_{\delta}=v$ on $\pa\Om$. If $\delta$ is sufficiently small, 
we have that $\na v_{\delta}\in U_1$ a.e.~in $\Om$,
where $U_1$ is defined in (\ref{nostro_U_1}), and we can proceed as in the first part of the proof.
\end{proof}
% la verifica che $U_0\subset U_1^{(1)}$ la diamo per scontata: si usa solo il fatto che $|c|<r_i.$ 

\begin{figure}[htbp]
\begin{center}
\includegraphics[width=3.5cm]{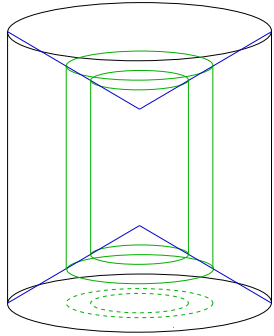}
\end{center}
\caption{Illustration of the sets $K_0^m$, $\mathscr C_m$ and $U_i$ appearing in the proof of 
Theorem \ref{teoatteso}, in the $(a_1,a_2,a_3)$-space.
$U_i$ is the region between the two small cylinders, coloured in green.} 
\label{fig:barattolo}
\end{figure}

%%%%%%%%%%%%%%%%%%%%%%%%
%%%%%%%%%%%%%%%%%%%%%%%%

\subsection{The $3$-dimensional geometrically linear case}

%%%%%%%%%%%%%%%%%%%%%%%%
%%%%%%%%%%%%%%%%%%%%%%%%

In this section we consider the $3$-dimensional geometrically linear model and we deal with
the energy density $V$ given by (\ref{en_V_dim2}).
Recall that we denote by 
$\mu_1(E)\leq\mu_2(E)\leq\mu_3(E)$ the ordered eigenvalues of a matrix $E\in Sym(3)$.
We have the following theorem.

\begin{theorem}\label{teoatteso_linear_3dim}
Consider a piecewise affine Lipschitz map $w:\Om\to\R^3$ such that 
\begin{equation}\label{cond_linear_3dim}
\na w\in\MMM_0\quad\mbox{a.e.~in }\ \Om,\qquad
{\rm ess\,inf}_{\Om}\,\mu_1(e(w))>-\frac12,\qquad
{\rm ess\,sup}_{\Om}\,\mu_3(e(w))<1.
\end{equation}
Then, for every $\ep>0$ there exists $u_{\ep}:\Om\to\R^3$ Lipschitz such that
\begin{equation}\label{pb_lin_3dim}
V(e(u_{\ep}))=0\quad\mbox{a.e.~in }\ \Om,\qquad u_{\ep}=w\quad\mbox{on }\ \pa\Om,
\end{equation}
and $||u_{\ep}-w||_{\infty}\leq\ep$.
The same result holds if $w\in C^{1,\alpha}(\overline{\Om};\R^3)$, for some $0<\alpha<1$,
and satisfies (\ref{cond_linear_3dim}).
\end{theorem}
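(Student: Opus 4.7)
The plan is to mimic the proof of Theorem \ref{teoatteso_nonlinear}, replacing the constraint $\det=1$ by $\tr=0$, the singular values of $\na y$ by the ordered eigenvalues of $e(\cdot)$, and Theorem \ref{generici_nonlinear} by Theorem \ref{generici}. Since $V(e(u))=0$ a.e.\ in $\Om$ is equivalent to $\na u\in K_0$ a.e., with $K_0$ defined in (\ref{nostroK}), the task reduces to exhibiting an in-approximation of $K_0$ in $\MMM_0$ in the sense of (the divergence-free analogue of) Definition \ref{inapprox_per_nonlinear}.

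Using (\ref{cond_linear_3dim}), I would pick strictly monotone sequences $\eta_i\downarrow-\frac12$ with $-\frac12<\eta_0<\mathrm{ess\,inf}_\Om\mu_1(e(w))$, and $\theta_i\uparrow 1$ with $\mathrm{ess\,sup}_\Om\mu_3(e(w))<\theta_0<1$, and fix $m>0$ with $|skw\,\na w|<m$ a.e.\ in $\Om$. I would then define
\begin{equation*}
U_1:=\{A\in\MMM_0:\eta_1<\mu_1(e(A)),\ \mu_3(e(A))<\theta_1,\ |skw\,A|<m\}\setminus\overline{\mathscr C},
\end{equation*}
and for $i\geq 2$
\begin{equation*}
U_i:=\{A\in\MMM_0:\mu_1(e(A))\in(\eta_i,\eta_{i-1}),\ \mu_3(e(A))\in(\theta_{i-1},\theta_i),\ |skw\,A|<m\}\setminus\overline{\mathscr C},
\end{equation*}
where $\mathscr C$ is a ``cone'' region near the boundary of $\{|skw\,A|<m\}$ modeled on $\mathscr C_m$ of the 2D proof, whose removal is designed to ensure that lamination does not escape the chosen shells. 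Continuity of the eigenvalues gives openness of each $U_i$ in $\MMM_0$; the bounds on the spectrum and on the skew part give boundedness; and if $F_{i_k}\in U_{i_k}$ with $F_{i_k}\to F$, then $\mu_1(e(F))=-\frac12$ and $\mu_3(e(F))=1$, which combined with $\tr\,e(F)=0$ forces $\mu_2(e(F))=-\frac12$, hence $e(F)\in\hat K_0$ and $F\in K_0$.

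The main obstacle is condition (1), $U_i\subseteq U_{i+1}^{lc}$. A rank-one direction $a\otimes b$ in $\MMM_0$ requires $a\cdot b=0$, so the symmetric part of the perturbation, $\frac12(a\otimes b+b\otimes a)$, is a rank-two trace-free symmetric matrix with eigenvalues $\{|a||b|/2,\,0,\,-|a||b|/2\}$ in a frame adapted to $\mathrm{span}\{a,b\}$, while the skew part $\frac12(a\otimes b-b\otimes a)$ has a comparable norm. Given $C\in U_i$ I would set $A:=C+(1-\lambda)\,a\otimes b$ and $B:=C-\lambda\,a\otimes b$, so that automatically $A-B=a\otimes b$ is rank one and $C=\lambda A+(1-\lambda)B$, and then tune $(a,b,\lambda)$ so that the symmetric perturbation slides the ordered eigenvalues of $e(A)$ and $e(B)$ into the tighter ranges $(\eta_{i+1},\eta_i)$ and $(\theta_i,\theta_{i+1})$ defining $U_{i+1}$, while the skew perturbation keeps $|skw\,A|,|skw\,B|<m$. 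As in the nonlinear case, a single rank-one split may not suffice, so I expect to need two successive laminations (analogous to $K^{(2)}$ in Theorem \ref{envelopes_De_Do}): first decompose along a direction that diagonalizes $e(C)$ in a well chosen $2$-plane, then perform a second lamination to correct the third eigenvalue. The region $\mathscr C$ is what must be carved out to make this two-step splitting feasible throughout $U_i$; verifying the precise geometric bounds on $(a,b,\lambda)$ is the most delicate point, and is where the 3D proof genuinely differs from the 2D one because $\hat K_0$ is a two-dimensional manifold rather than a circle.

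With the in-approximation in hand, Theorem \ref{generici} yields a Lipschitz $u_\ep$ satisfying (\ref{pb_lin_3dim}) together with $\|u_\ep-w\|_\infty\leq\ep$. For the Hölder case, I would invoke Proposition \ref{perHol} to obtain a piecewise affine Lipschitz divergence-free map $w_\delta$ with $w_\delta=w$ on $\pa\Om$ and $\|w_\delta-w\|_{W^{1,\infty}}\leq\delta$; for $\delta$ sufficiently small the eigenvalue bounds (\ref{cond_linear_3dim}) are inherited by $w_\delta$, so $\na w_\delta\in U_1$ a.e.\ in $\Om$ and the piecewise affine case applies to produce the desired $u_\ep$.
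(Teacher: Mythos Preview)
Your overall strategy is right and matches the paper: construct an in-approximation of $K_0$ in $\MMM_0$, invoke Theorem~\ref{generici} for the piecewise affine case, and Proposition~\ref{perHol} for the H\"older case. Your verification of condition~(3) (convergence into $K_0$ via $\tr e(F)=0$) is also correct, and you are right that two laminations are needed for condition~(1).

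Where your construction parts ways with the paper---and where the proposal has a genuine gap---is in the handling of the skew part. You fix a single bound $m$ on $|skw\,A|$ and propose to remove an unspecified cone $\mathscr C$ near the boundary of $\{|skw\,A|<m\}$, by analogy with the 2D set $\mathscr C_m$. In two dimensions that works because the skew part is one-dimensional and the rank-one relation reads $(a_3-b_3)^2=|a-b|^2$, giving a scalar Lipschitz link between the symmetric and skew increments. In three dimensions the skew part is three-dimensional and the geometry is considerably more involved; you leave both the definition of $\mathscr C$ and the verification of $U_i\subseteq U_{i+1}^{lc}$ open, and it is not clear that a cone-shaped excision can be made to work.

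The paper avoids this difficulty by a different device: it lets the skew bound \emph{grow} along the approximation, but controllably. It takes a single increasing sequence $r_i\uparrow\tfrac12$ with $\sum_i\sqrt{r_{i+1}-r_{i-1}}<\infty$, and an increasing bounded sequence $m_i$ with $m_{i+1}>m_i+4\sqrt{r_{i+1}-r_{i-1}}$. The sets $U_i$ are defined by $\mu_1(sym\,A),\mu_2(sym\,A)\in(-r_i,-r_{i-1})$, $\mu_3(sym\,A)\in(2r_{i-1},2r_i)$, and $|skw\,A|<m_i$, with no cone removed. The inclusion $U_i\subseteq U_{i+1}^{(2)}$ is then established by an explicit two-step rank-one splitting (in diagonalizing coordinates for $sym\,A$, following Cesana), and the key estimate is that each lamination increases $|skw|$ by at most $\sqrt{3(r_{i+1}-r_{i-1})}$, which the gap $m_{i+1}-m_i$ is designed to absorb. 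This summability trick for the skew bounds is the main technical idea your sketch is missing; with it the two-step lamination can be written out completely, without any excised region.
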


Recall that  the set 
\begin{equation}\label{K_0_3_dim}
K_0:=\left\{A\in\MMM_0:\mu_1(sym A)=\mu_2(sym A)=-\frac12,\,\mu_3(sym A)=1\right\}
\end{equation}
is the set which minimizes $V$ at the level zero, so that
problem (\ref{pb_lin_3dim}) can be rewritten in the equivalent form
\begin{equation*}
\na u_{\ep}\in K_0\quad\mbox{a.e.~in }\ \Om,\qquad u_{\ep}=w\quad\mbox{on }\ \pa\Om.
\end{equation*}
Therefore, in order to prove Theorem \ref{teoatteso_linear_3dim} we can use Theorem \ref{generici}
restricted to $N=3$ if we exhibit an in-approximation $\{U_i\}$ of the set $K_0$ such that 
\begin{equation}\label{cond_thm_linear_3_dim}
\na w\in U_1\quad\mbox{ a.e.~in }\Om.
\end{equation}
For positive constants $\alpha$ and $m$, 
the construction of $\{U_i\}$ in the following proof hinges on the sets
\begin{equation}
\mathcal K_{\alpha,m}:=\{A\in\MMM_0:\mu_1(sym A)=\mu_2(sym A)=-\alpha,\,
\mu_3(sym A)=2\alpha,\,|skw A|<m\}.\label{set1}
\end{equation}
% \mathcal M_{\alpha,m}&:=\{A\in\MMM_0:\mu_1(sym A)=-\alpha,\,
% -\alpha\leq\mu_2(sym A),\,\mu_3(sym A)\leq2\alpha\,|skw A|<m\},\label{set2}\\
% \mathcal Q_{\alpha,m}&:=\{A\in\MMM_0:
% -\alpha\leq\mu_1(sym A),\,\mu_3(sym A)\leq2\alpha\,|skw A|<m\},\label{set3}
% \end{align}
Analogously to the $2$-dimensional case, the restriction $|skw A|<m$ in the above definition is related to the fact that the sequence $\{U_i\}$ has to be bounded.

\begin{proof}
To define a suitable in-approximation of the set $K_0$ defined in (\ref{K_0_3_dim}),
let us first introduce a strictly increasing sequence $\{r_i\}_{i\geq1}$ of positive numbers 
such that $r_i\to(1/2)^-$, as $i\to\infty$, and satisfying
\begin{equation}\label{prop_seq_r_i}
\sum_{i=2}^{\infty}\sqrt{r_{i+1}-r_{i-1}}<\infty.
\end{equation}
It is easy to check that such a sequence exists: it is enough to consider, e.g.,
$\tilde r_i=\sum_{j=1}^i1/j^4$ and $r_i:=\tilde r_i-C+1/2$ where $C:=\sum_{j=1}^{\infty}1/j^4$.
We then choose $\{m_i\}_{i\geq1}$ to be a bounded sequence of positive numbers
such that
\begin{equation}\label{3dim_stimetta0}
m_{i+1}>m_i+4\sqrt{r_{i+1}-r_{i-1}},\qquad\mbox{for every }i\geq1.
\end{equation}
Note that $\{m_i\}$ can be chosen to be bounded in view of (\ref{prop_seq_r_i}).
We now define the sets $U_i$ of the in-approximation for $i\geq2$ and define
$U_1$ later on.
Setting
\begin{multline}\label{def_U_i_3dim}
U_i:=\{A\in\MMM_0:\mu_1(sym A),\mu_2(sym A)\in(-r_i,-r_{i-1}),\\
\mu_3(sym A)\in(2r_{i-1},2r_i),\,|skw A|<m_i\},
\end{multline}
it is clear  that the sets  $U_i$ are open in $\MMM_0$ and equibounded.
Also, if $\{F_i\}$ is a sequence of matrices such that $F_i\in U_i$ for every $i$
and $F_i\to F$, then in particular  $F\in K_0$, because $[-r_i,-r_{i-1}]\to\{-1/2\}$
and $[2r_{i-1},2r_i]\to\{1\}$. To show that $U_i\subseteq U_{i+1}^{lc}$
we show next that $U_i\subseteq U_{i+1}^{(2)}$, where
$U_{i+1}^{(2)}$ is the set of the second order laminates of $U_{i+1}$.
We have the following claim.

\noindent\emph{Claim.}
Setting
\begin{equation*}
\alpha_i:=\frac{r_i+r_{i+1}}2,
\end{equation*}
then $U_i\subseteq\mathcal K_{\alpha_i,m_{i+1}}^{(2)}$,
where $\mathcal K_{\alpha_i,m_{i+1}}$ is given by
(\ref{set1}) with $\alpha_i$ and $m_{i+1}$ in place of $\alpha$ and $m$, respectively.

Note that, once we have proved the claim, the fact that $U_i\subseteq U_{i+1}^{(2)}$
is straightforward, because $\mathcal K_{\alpha_i,m_{i+1}}\subseteq U_{i+1}$ and
in turn $U_i\subseteq\mathcal K_{\alpha_i,m_{i+1}}^{(2)}\subseteq U_{i+1}^{(2)}$.
This concludes the proof of the fact that $\{U_i\}_{i\geq2}$ is an in-approximation.
Once proved the claim we are then left to choose $U_1$ in such a way
that $\{U_i\}_{i\geq1}$ is still an in-approximation and condition 
(\ref{cond_thm_linear_3_dim}) is satisfied.

\noindent\emph{Proof of the claim.}
Fixed $A\in U_i$, we can assume without restrictions that 
$sym A$ is the diagonal matrix ${\rm diag\,}(\mu_2(sym A),\mu_1(sym A),\mu_3(sym A))$. 
% Observe first that since $r_i<\alpha_i$, then  $U_i\subseteq\mathcal Q_{\alpha_i,m_i}$, 
% where $\mathcal Q_{\alpha_i,m_i}$ is given by (\ref{set2})
% with $\alpha_i$ and $m_i$ in place of $\alpha$ and $m$, respectively.
% Thus, $A\in\mathcal Q_{\alpha_i,m_i}$.
Proceeding as in \cite[proof of Corollary 2]{Ce}, let us set
\begin{equation*}
B^+:=A^++skw A,\qquad\quad B^-:=A^-+skw A,
\end{equation*}
with
\begin{equation*}
A^{\pm}:=\left[
\begin{array}{ccc}
\mu_2(sym A) & 0 & 0 \\
0 & \mu_1(sym A) & \pm2\delta \\
0 & 0 & \mu_3(sym A)
\end{array}
\right],
\end{equation*}
so that
\begin{equation}\label{3dim_stimetta01}
A=\frac12 B^++\frac12 B^-.
\end{equation}
Choosing
\begin{equation*}
\delta:=\sqrt{(\alpha_i+\mu_1(sym A))(\alpha_i+\mu_3(sym A))},
\end{equation*}
we obtain that $\delta$ is well-defined and positive because $\mu_1(sym A)>-r_i>-\alpha_i$,
and we get
\begin{equation}\label{3dim_stimetta001}
\mu_1(sym B^{\pm})=-\alpha_i,\quad 
\mu_2(sym B^{\pm})=\mu_2(sym A),\quad
\mu_3(sym B^{\pm})=\alpha_i-\mu_2(sym A).
\end{equation}
Also, note that 
\begin{equation}\label{3dim_stimetta1}
|skw B^{\pm}|\leq|skw A^{\pm}|+|skw A|<\sqrt2\delta+m_i,
\end{equation}
and that, since $\alpha_i<1/2$, $\mu_3(sym A)<1$, and
$-r_{i+1}<-\alpha_i<-r_i<\mu_1(sym A)<-r_{i-1}$, then
\begin{equation}\label{3dim_stimetta2}
\delta<\sqrt{\frac32(\alpha_i+\mu_1(sym A))}<\sqrt{\frac32(r_{i+1}-r_{i-1})}.
\end{equation}
Estimates (\ref{3dim_stimetta1}) and (\ref{3dim_stimetta2}) give
\begin{equation}\label{3dim_stimetta21}
|sym B^{\pm}|<\sqrt{3(r_{i+1}-r_{i-1})}+m_i.
\end{equation}
Now we want to show that, given any matrix $B\in\MMM_0$ such that
\begin{equation}\label{3dim_stimetta3}
\mu_1(sym B)=-\alpha_i,\quad\mu_2(sym B)\in(-r_i,-r_{i-1}),\quad\mu_3(sym B)<2\alpha_i,
\end{equation}
and satisfying
\begin{equation}\label{3dim_stimetta31}
\quad|skw B|<\sqrt{3(r_{i+1}-r_{i-1})}+m_i,
\end{equation}
then 
\begin{equation*}
B=\frac12 C_++\frac12 C_-,\quad\mbox{for some }\ C_+,C_-\in\mathcal K_{\alpha_i,m_{i+1}}
\ \mbox{ such that }\ \rank(C_+-C_-)=1.
\end{equation*}
To see this, let us suppose that $sym B$ is in diagonal form 
\[
sym B={\rm diag\,}(-\alpha_i,\mu_2(sym B),\mu_3(sym B)),
\] 
and, following \cite[proof of Proposition 4]{Ce}, let us write
\begin{equation}
B=\frac12 C_++\frac12 C_-,
\end{equation}
where
\begin{equation*}
C_+:=D_++skw B,\qquad\quad C_-:=D_-+skw B,
\end{equation*}
and
\begin{equation*}
D_{\pm}:=\left[
\begin{array}{ccc}
-\alpha_i & 0 & 0 \\
0 & \mu_2(sym B) & \pm2\ep \\
0 & 0 & \mu_3(sym B)
\end{array}
\right].
\end{equation*}
Choosing
\begin{equation*}
\ep:=\sqrt{(-2\alpha_i+\mu_2(sym B))(-2\alpha_i+\mu_3(sym B))},
\end{equation*}
we obtain that $\ep$ is well-defined and positive because $\mu_3(sym B)<2\alpha_i$,
and we get
\begin{equation}\label{3dim_stimetta4}
\mu_1(sym C_{\pm})=-\alpha_i,\quad 
\mu_2(sym C_{\pm})=-\alpha_i,\quad
\mu_3(sym C_{\pm})=2\alpha_i.
\end{equation}
Moreover, from (\ref{3dim_stimetta3})-(\ref{3dim_stimetta31}) 
and from the fact that $B\in\MMM_0$ we obtain
\begin{equation*}
\ep=\sqrt{(2\alpha_i-\mu_2(sym B))(\alpha_i-\mu_2(sym B))}<
\sqrt{\frac32(r_{i+1}-r_{i-1})},
\end{equation*}
and in turn
\begin{align}\label{3dim_stimetta5}
|skw C_{\pm}|\leq|skw D_{\pm}|+|skw B|&
                                    <\sqrt2\ep+\sqrt{3(r_{i+1}-r_{i-1})}+m_i\nonumber\\
                 & <2\sqrt{3(r_{i+1}-r_{i-1})}+m_i.
\end{align}
Equations (\ref{3dim_stimetta0}), (\ref{3dim_stimetta4}) and (\ref{3dim_stimetta5})
imply that $C_{\pm}\in\mathcal K_{\alpha_i,m_{i+1}}$. The fact 
that $\rank(C_+-C_-)=1$ comes from the construction.

Finally, going back to (\ref{3dim_stimetta01}) and noting from (\ref{3dim_stimetta001}) 
and (\ref{3dim_stimetta21}) that  $B^+$ and $B^-$ 
satisfy (\ref{3dim_stimetta3})-(\ref{3dim_stimetta31}), 
we have that (\ref{3dim_stimetta01}) holds with
\begin{equation}\label{3dim_stimetta6}
B^+=\frac12 C^+_++\frac12 C^+_-,\qquad\quad
B^-=\frac12 C^-_++\frac12 C^-_-,
\end{equation}  
for some $C^+_+$, $C^+_-$, $C^-_+$, $C^-_-\in\mathcal K_{\alpha_i,m_{i+1}}$
such that $\rank(C^+_+-C^+_-)=\rank(C^-_+-C^-_-)=1$. Since from the
construction we have also that $\rank(B^+-B^-)=1$, equations
(\ref{3dim_stimetta01}) and (\ref{3dim_stimetta6}) give that
$A\in\mathcal K_{\alpha_i,m_{i+1}}^{(2)}$. This concludes the proof of the claim.

Now, let us choose the first elements of the sequences $\{r_i\}_{i\geq1}$
and $\{m_i\}_{i\geq1}$ such that
\begin{equation*}
{\rm ess\,inf}_{\Om}\,\mu_1(e(w))>-r_1>-\frac12,\quad
{\rm ess\,sup}_{\Om}\,\mu_3(e(w))<2r_1<1,\quad
m_1>\|\na w\|_{\infty}.
\end{equation*}
By this choice we have that condition (\ref{cond_thm_linear_3_dim}) is satisfied with
\begin{equation*}
U_1:=\{A\in\MMM_0:-r_1<\mu_1(sym A),\,\mu_3(sym A)<2r_1,\,|skw A|<m_1\},
\end{equation*}
which is a set open in $\MMM_0$.
Taking $\alpha_1:=(r_1+r_2)/2$, proceeding as in the proof of the claim gives
that $U_1\in\mathcal K_{\alpha_1,m_2}^{(2)}$ and in turn
$U_1\in U_2^{(2)}$, being $\mathcal K_{\alpha_1,m_2}\subseteq U_2$.
\end{proof}

%%%%%%%%%%%%%%%%%%%%%%%%
%%%%%%%%%%%%%%%%%%%%%%%%

\section{Appendix}\label{dim_teo_gen}

%%%%%%%%%%%%%%%%%%%%%%%%
%%%%%%%%%%%%%%%%%%%%%%%%

In this section we prove the following Theorem \ref{generici} and Proposition \ref{perHol}, 
adapting the procedure used in 
\cite{MuSv2} to the linear constraint $\divv u=0$.
The set $\Om$ is a bounded and Lipschitz domain of $\R^N$.
We denote by $[A,B]$ the segment between the matrices $A$ and $B$.
% Recall that this is a sufficient condition for $W^{1,\infty}(\Om)$ to agree with 
% the class of Lipschitz functions ${\rm Lip}(\overline\Om)$. 
%In particular, such maps have a continuous extension to the closure $\overline\Om$. 
%The set $W_0^{1,\infty}(\Om)$ is the set of Lipschitz functions which are null on the boundary.
%It is worth pointing out that this definition differs from the definition of
%$W_0^{1,p}(\Om)$, for $1\leq p<\infty$, 
%as the closure of the set $C_c^{\infty}(\Om)$ in the topology of $W^{1,p}(\Om)$.
We use the symbols $\|\cdot\|_{\infty}$ and $\|\cdot\|_{1,\infty}$ for the $L^{\infty}$- and the 
$W^{1,\infty}$-norm, respectively. When we want to indicate the domain explicitly, we write
$\|u\|_{L^{\infty}(\Lambda;\R^m)}$ or $\|u\|_{W^{1,\infty}(\Lambda;\R^m)}$, for $u:\Lambda\to\R^m$.

\begin{theorem}\label{generici}
Suppose that $K_0\subseteq\MNN_0$ admits an in-approximation $\{U_i\}$
in the sense of Definition \ref{inapprox_per_nonlinear} with $\Sigma$ replaced by $\MNN_0$. 
Suppose that $v:\Om\to\R^N$ is piecewise affine, Lipschitz, and such that
\begin{equation}\label{cond_in_U_1}
\na v\in U_1\ \ \mbox{a.e.~in }\Om.
\end{equation}
Then, for every $\ep>0$ there exists a Lipschitz map $u_{\ep}:\Om\to\R^n$ such that
\begin{itemize}
\item[(i)] $\na u_{\ep}\in K_0$ a.e.~in $\Om$,
\item[(ii)] $u_{\ep}=v$ on $\pa\Om$,
\item[(iii)] $||u_{\ep}-v||_{\infty}\leq\ep$.
\end{itemize}
\end{theorem}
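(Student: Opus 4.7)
The plan is to adapt the M\"uller--\u Sver\'ak convex integration scheme to the divergence-free setting. I will inductively construct a sequence of piecewise affine Lipschitz maps $u_k:\Om\to\R^N$ with $u_1=v$, $u_k=v$ on $\pa\Om$, $\na u_k\in U_k$ a.e.\ in $\Om$, and $\|u_{k+1}-u_k\|_\infty \leq \ep\,2^{-k}$. Uniform convergence then produces a continuous limit $u$ satisfying $u=v$ on $\pa\Om$ and $\|u-v\|_\infty\leq \ep$; the uniform boundedness of $\{U_i\}$ makes $(u_k)$ equi-Lipschitz and hence $u$ itself Lipschitz. To pass from $\na u_k\in U_k$ to $\na u\in K_0$ a.e., I would exploit condition (3) of Definition \ref{inapprox_per_nonlinear} in its strengthened subsequence form: combining a Baire category argument on the metric space of admissible subsolutions (Lipschitz $w$ with $w=v$ on $\pa\Om$ and $\na w\in U_1^{lc}$ a.e., under the $L^\infty$-distance, as in \cite{MuSv2}) with a quantitative refinement of the iterative construction ensures that $\na u_k$ converges a.e.\ along a subsequence, at which point (3) delivers $\na u\in K_0$ a.e.

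The inductive step $u_k\leadsto u_{k+1}$ is performed piece by piece. On each affine piece $\Om_i^k\subseteq\Om$ one has $\na u_k\equiv A_i^k\in U_k\subseteq U_{k+1}^{lc}$, so by the recursive characterisation \eqref{U^lc}--\eqref{lc_recurs}, $A_i^k\in U_{k+1}^{(m)}$ for some finite $m=m(i,k)$. After $m$ successive applications of a single-rank-one perturbation lemma, one obtains a piecewise affine Lipschitz function $\ffi_i^k:\Om_i^k\to\R^N$ compactly supported in $\Om_i^k$ with $\divv\ffi_i^k=0$, $\na\ffi_i^k+A_i^k\in U_{k+1}$ a.e., and $\|\ffi_i^k\|_\infty$ arbitrarily small; gluing these perturbations across the pieces, and using that $\ffi_i^k$ vanishes on $\pa\Om_i^k$, gives $u_{k+1}$ with the required properties.

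The heart of the construction is therefore the following rank-one building block: given a bounded open $V\subseteq\MNN_0$, an open $\Lambda\subseteq\R^N$, and $A=\lambda B+(1-\lambda)C$ with $B,C\in V$ and $\rank(B-C)=1$, produce a piecewise affine Lipschitz $\ffi:\Lambda\to\R^N$ with $\ffi=0$ on $\pa\Lambda$, $\divv\ffi=0$, $\na\ffi+A\in V$ a.e., and $\|\ffi\|_\infty$ arbitrarily small. The divergence constraint is automatic here: writing $B-C=a\otimes b$, the identity $\tr(B-C)=0$ forces $a\cdot b=0$, so any map of the form $x\mapsto\psi(x\cdot b)\,a$ is automatically divergence free. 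A scalar sawtooth $\psi$ of one variable whose derivative alternates between the slopes $1-\lambda$ and $-\lambda$, in proportions $\lambda$ and $1-\lambda$, yields a gradient oscillating between $B-A$ and $C-A$; cutting off $\psi$ near the endpoints of its support creates a thin transition layer in which $\na\ffi+A$ traces values close to the rank-one segment $[B,C]$. Tiling $\Lambda$ by translated and dilated copies of a unit slab via Vitali's covering (Theorem \ref{Vitali}) then makes $\|\ffi\|_\infty$ as small as desired.

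The principal obstacle is the joint control, in the rank-one building block, of: (i) the $L^\infty$-smallness of $\ffi$, (ii) the measure of the transition layer, and (iii) the requirement that $\na\ffi+A$ remain in the open set $V$ throughout rather than merely in $V^{lc}$. Openness of $U_{k+1}$ in $\MNN_0$ is essential here, and all quantitative bounds must survive the $m$-fold iteration without deterioration. A further delicate point is the extraction of pointwise a.e.\ gradient convergence from the uniform convergence of the $u_k$; either a Borel--Cantelli argument (arranging the set where $\na u_{k+1}\neq\na u_k$ to have summable measure in $k$) or the Baire category approach of \cite{MuSv2} is needed to reach the regime in which the subsequence formulation of property (3) applies and closes the argument.
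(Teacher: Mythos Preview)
Your overall architecture is right---iterate a version of Theorem~\ref{aperti} to produce $u_k$ with $\na u_k\in U_k$, then pass to a uniform limit---but two steps contain genuine gaps. First, the rank-one building block: the observation that $\tr(a\otimes b)=a\cdot b=0$ makes $x\mapsto\psi(x\cdot b)\,a$ divergence free is correct, but this map cannot be made to vanish on the full boundary of a \emph{bounded} tile while remaining divergence free and piecewise affine. Cutting off $\psi$ in the $b$-direction kills $\ffi$ only on two parallel faces of a slab; any further cutoff $\chi$ in the transverse directions yields $\divv(\chi\ffi)=(\na\chi\cdot a)\,\psi\neq 0$. Since Vitali's theorem requires compact tiles, ``tiling by slabs'' is not available. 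This is precisely why the paper resorts to Pompe's explicit triangular construction (Lemma~\ref{Pompe}): one builds by hand a piecewise affine divergence-free map on a triangle with zero boundary values, and only then invokes Vitali.

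Second, and more critically, your mechanism for upgrading uniform convergence of $u_k$ to a.e.\ convergence of $\na u_k$ does not work. The Borel--Cantelli route fails outright: in the in-approximations actually constructed in the paper (see the proofs of Theorems~\ref{teoatteso_nonlinear}, \ref{teoatteso}, \ref{teoatteso_linear_3dim}) the sets $U_k$ for $k\geq 2$ are pairwise disjoint, so necessarily $\na u_{k+1}\neq\na u_k$ a.e.\ and $|\{\na u_{k+1}\neq\na u_k\}|=|\Om|$ for every $k$. The Baire category method is a different proof strategy altogether and, incidentally, is not what \cite{MuSv2} does. The device the paper uses---and this is the crux of the argument---is a mollification trick: at step $i$ one first fixes a scale $\delta_i$ with $\|\rho_{\delta_i}\!*\!\na u_i-\na u_i\|_{L^1(\Om_i)}<2^{-i}$, and only \emph{afterwards} chooses $\ep_{i+1}:=\delta_i\ep_i$ so small that, via $\|\na\rho_{\delta_i}\|_{L^1}\leq C/\delta_i$, one controls $\|\rho_{\delta_i}\!*\!(\na u_i-\na u)\|_{L^1}$ by a tail sum of the $\ep_j$. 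These two estimates combine to force $\na u_i\to\na u$ in $L^1$; a subsequence then converges a.e., and condition~(3) of Definition~\ref{inapprox_per_nonlinear} closes the proof.
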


The proof of this theorem is the last step of an
approximation process which passes through some preliminary results: Lemma \ref{Pompe},
Lemma \ref{4.1MuSv}, and Theorem \ref{aperti}.
In Lemma \ref{Pompe} the following problem is considered:
given two rank-one connected matrices $A$ and $B$ and given $C=(1-\lambda)A+\lambda B$ for some $\lambda\in(0,1)$, 
we construct a map $u$ which satisfies the constraint $\divv u=0$ and the boundary condition
$u(x)=Cx$, and whose gradient lies in a sufficiently small neighbourhood of $[A,B]$.
The next step consists in considering $U$ relatively open in $\MNN_0$.
Lemma \ref{4.1MuSv} states that for every affine boundary data $x\mapsto Cx$ with $C\in U^{lc}$,
there exists a piecewise affine and Lipschitz map $u$ whose gradient   
is always in $U^{lc}$ and is such that the set where $\na u\notin U$ is very small.
Then, by the same iterative method used in the proof
of Lemma \ref{4.1MuSv} it is possible to remove step by step the set where $\na u\notin U$ and
allow for boundary data $v$ such that $\na v\in U^{lc}$ a.e.~in $\Om$: this is the content of Theorem \ref{aperti}.
Finally, the relatively open set  $U$ is replaced by a set $K_0$ 
satisfying the in-approximation property (see Theorem \ref{generici}). 
This last step requires another iteration process. 

The following proposition, whose proof is postponed at the end of this section, 
allows us to extend Theorem \ref{generici} to the case where the boundary data
$v$ is of class $C^{1,\alpha}(\overline\Om;\R^N)$ for some $\alpha\in(0,1)$, and satisfies (\ref{cond_in_U_1}).

\begin{prop}\label{perHol}
Let $u\in C^{1,\alpha}(\overline\Om;\R^N)$ be such that
$\divv u=0$ in $\Om$.

For every $\delta>0$ there exists a piecewise affine Lipschitz map $u_{\delta}:\Om\to\R^N$
such that
\begin{align*}
&\divv u_{\delta}=0\quad\mbox{a.e.~in }\Om,\\
&u_{\delta}=u\quad\mbox{on }\pa\Om,\\
&||u_{\delta}-u||_{1,\infty}\leq\delta.
\end{align*}
\end{prop}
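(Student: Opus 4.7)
The plan is to combine a $P_1$ Lagrange interpolation on a fine triangulation with a local divergence correction, and then to upgrade the boundary trace to an exact match via a boundary-layer iteration. Fix a shape-regular triangulation $\mathcal T_h$ of $\Om$ of mesh size $h>0$, conforming to $\pa\Om$, and let $u_h$ be the continuous piecewise-affine $P_1$ Lagrange interpolant of $u$ at the vertices of $\mathcal T_h$. Because $u\in C^{1,\alpha}(\overline{\Om};\R^N)$, standard Bramble--Hilbert-type estimates give $\|u-u_h\|_{1,\infty}\leq Ch^{\alpha}[u]_{C^{1,\alpha}}$; in particular, on each $T\in\mathcal T_h$ the defect $c_T:=\divv u_h|_T$ is a constant with $|c_T|\leq Ch^{\alpha}$, since $c_T$ differs from $\divv u=0$ at the centroid by at most $\|\na u_h-\na u\|_{\infty}$.

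Next, on each simplex $T$ I build a continuous piecewise-affine corrector $b_T\colon T\to\R^N$, vanishing on $\pa T$, with $\divv b_T=-c_T$ a.e.\ in $T$ and $\|b_T\|_{1,\infty}\leq C|c_T|$. On the reference simplex $\widehat T$, I use the stellar refinement into $N+1$ subsimplices meeting at the barycenter. The space of $P_1$-conforming vector fields on this refinement that vanish on $\pa\widehat T$ has exactly $N$ degrees of freedom (the value at the barycenter), while the space of piecewise constants of zero mean on the refinement is also $N$-dimensional. The divergence is a square operator between them, and non-singularity is a direct computation on a fixed simplex; shape-regularity then yields the uniform $W^{1,\infty}$ estimate after scaling. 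Setting $u_\delta:=u_h+\sum_T b_T$ (with each $b_T$ extended by zero outside $T$) produces a continuous piecewise-affine map satisfying $\divv u_\delta=0$ a.e.\ in $\Om$ and $\|u_\delta-u\|_{1,\infty}\leq Ch^{\alpha}$, which is below $\delta$ for $h$ small.

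So far the trace of $u_\delta$ on $\pa\Om$ equals $u_h|_{\pa\Om}$, i.e.\ the vertex interpolation of $u|_{\pa\Om}$, which may differ from $u|_{\pa\Om}$ by $O(h^{1+\alpha})$ along boundary edges. To upgrade to exact equality, I iterate the above construction in a nested exhaustion $\Om_k\nearrow\Om$ with $\overline{\Om}_k\subset\Om_{k+1}$ and the width of each shell $\Om_{k+1}\setminus\Om_k$ tending to zero. In each shell I use a triangulation of scale $h_k\searrow0$, placing the outer vertices directly on $\pa\Om$ (possible because $\pa\Om$ is Lipschitz, by flattening the boundary locally) and reusing on the inner side the boundary nodes inherited from the construction on $\Om_k$. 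Choosing the $h_k$ with $\sum_k h_k^{\alpha}<\delta$, the telescoping $W^{1,\infty}$ estimates give the overall bound, and in the limit $u_\delta$ is piecewise affine in the sense of Definition \ref{piecewiseaffine}, with countably many pieces clustering at $\pa\Om$; its continuous trace on $\pa\Om$ coincides with $u|_{\pa\Om}$ pointwise, because the boundary-vertex interpolation error is $O(h_k^{1+\alpha})$ and $h_k\to 0$.

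The main obstacle is the local divergence-correction step, that is, producing the piecewise-affine bubble $b_T$ with the stated $W^{1,\infty}$ estimate on a stellar refinement of $T$. This is the linear (divergence) counterpart of the determinant-preserving local correction used in \cite{MuSv2}, and it reduces to verifying that a specific $N\times N$ linear system (the value-at-barycenter to divergence-on-subsimplices map) is invertible, with a bound stable under the scaling and shape-regular deformations of $\widehat T$ to $T$. A secondary subtlety is the boundary-layer iteration, which requires that the triangulations of adjacent shells share their common interface nodes and that the Lipschitz boundary be approximated cleanly enough for the vertex interpolation to converge to $u$ pointwise along $\pa\Om$. Both points are straightforward adaptations of the techniques of \cite{MuSv2}.
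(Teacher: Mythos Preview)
Your local divergence-correction step cannot work as stated. You want a continuous piecewise-affine $b_T$ on the stellar refinement of $T$ with $b_T=0$ on $\pa T$ and $\divv b_T=-c_T$ a.e.\ in $T$, where $c_T=\divv u_h|_T$ is the constant divergence of the $P_1$ interpolant on $T$. But the divergence theorem forces $\int_T\divv b_T=0$ whenever $b_T$ vanishes on $\pa T$, so $\divv b_T$ cannot equal a nonzero constant on $T$. Your own dimension count already says this: the divergence maps your bubble space into the \emph{mean-zero} piecewise constants on the refinement, whereas the target $-c_T$ has mean $-c_T\neq0$ in general (the $P_1$ interpolant of a divergence-free $C^{1,\alpha}$ field is not divergence-free on each simplex). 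No local bubble with zero boundary values can absorb a net flux; any correction must either be genuinely nonlocal or carry nonzero traces on $\pa T$ matched across neighbouring elements, and you provide neither.

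The paper sidesteps this obstruction with a different construction (Lemma~\ref{corollario_hol}). On each ball $B(a,r)$ it replaces $u$ by its first-order Taylor polynomial $u_0(x)=u(a)+\na u(a)(x-a)$ on the inner half $B(a,r/2)$, interpolates between $u_0$ and $u$ on the annulus $B(a,r)\setminus B(a,r/2)$ via a cutoff, and then corrects the resulting divergence defect on the annulus using Dacorogna's $C^{1,\alpha}$ right inverse of $\divv$ with zero Dirichlet data. The point is that this defect has \emph{zero mean} on the annulus, because both $u$ and $u_0$ are divergence-free (here $\tr\na u(a)=0$), so the compatibility condition is met. One then covers a compact subset of $\Om$ by finitely many disjoint such balls, leaving $u$ untouched near $\pa\Om$, and iterates on the leftover open set, whose measure shrinks geometrically. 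In particular $u^{(k)}=u$ on $\pa\Om$ exactly at every step, so your boundary-layer iteration is not needed.
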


The following lemma represents the first step of the process leading to the proof
of Theorem~\ref{generici}.

\begin{lemma}\label{Pompe}
Let $A$, $B\in\MNN_0$ be such that {\rm rank}$(A-B)=1$ and consider
\begin{equation}\label{Clambda}
C=(1-\lambda)A+\lambda B,\qquad\mbox{for some }\lambda\in(0,1).
\end{equation}
For every $\ep>0$ there exists a piecewise affine Lipschitz map $u_{\ep}:\Om\to\R^N$ such that
\begin{align}
&\na u_{\ep}\in\MNN_0\quad\mbox{a.e.~in }\Om,\label{tesi_pompe(i)}\\
&u_{\ep}(x)=Cx\quad\mbox{for every }x\in\pa\Om,\label{tesi_pompe(ii)}\\
&{\rm dist}(\na u_{\ep},[A,B])<\ep\quad\mbox{a.e.~in }\Om,\label{tesi_pompe(iii)}\\
&\displaystyle\left|\left\{x\in\Om\,:\,{\rm dist}(\na u_{\ep},\{A,B\})\geq\ep\right\}\right|\leq c|\Om|,\label{tesi_pompe(iv)}\\
&\displaystyle\sup_{x\in\Om}|u_{\ep}(x)-Cx|<\ep.\label{tesi_pompe(v)}
\end{align}
The constant $c$ appearing in \eqref{tesi_pompe(iv)} is such that $0<c<1$ 
and depends only on the dimension $N$. 
\end{lemma}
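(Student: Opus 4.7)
The construction is a standard convex-integration lamination adapted to the divergence-free constraint, with a Vitali covering step to globalize.

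\emph{Setup.} Since $\rank(A-B)=1$ and $A,B\in\MNN_0$, write $B-A=a\otimes n$. The trace-zero condition forces
$$
a\cdot n=\tr(a\otimes n)=\tr(B)-\tr(A)=0,
$$
so $a\perp n$. After an orthogonal change of variables, assume $n=e_1$ and $a=\rho\,e_2$ for some $\rho>0$.

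\emph{Reference laminate on a thin parallelepiped.} On a block $R=(0,L)\times Q$ aligned with the $e_1$-axis (with $Q$ a small cross section in $\mathrm{span}(e_2,\ldots,e_N)$), consider the pure laminate
$$
u_0(x):=Cx+\rho\,\phi(x_1)\,e_2,
$$
where $\phi$ is piecewise affine with $\phi'$ equal to $(1-\lambda)$ on a set of density $\lambda$ in $(0,L)$ and to $-\lambda$ on the complementary density, arranged so that $\phi(0)=\phi(L)=0$. Because $a\cdot n=0$, we have $\divv u_0=\tr C+\rho\phi'(x_1)(e_2\cdot e_1)=0$, and $\na u_0\in\{A,B\}$ a.e.\ in $R$. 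In particular $u_0=Cx$ on the two ``end'' faces $\{x_1=0\}$ and $\{x_1=L\}$ of $R$, but not on the lateral faces perpendicular to $e_j$ for $j\ge 2$.

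\emph{Boundary-layer correction.} The main technical step is to modify $u_0$ in a thin lateral layer of width $\eta\ll L$ adjacent to $\partial R$ so that the resulting map matches $Cx$ on all of $\partial R$, remains piecewise affine and Lipschitz, has gradient in $\MNN_0$, and has gradient in an $\ep$-neighbourhood of $[A,B]$. This is achieved by subdividing the layer into elementary cells and defining, on each cell, an affine interpolant whose gradient takes a finite set of trace-zero values close to $[A,B]$, with rank-one jumps across the interior interfaces. The trace-zero property is preserved piecewise because $\MNN_0$ is a linear subspace and the pieces are interpolations; the Hadamard (rank-one) compatibility of adjacent pieces is arranged by choosing interface normals compatible with the jumps. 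A convenient organizing principle is the skew-symmetric ``vector-potential'' representation $\tilde u_i=\sum_j\partial_j S_{ij}$, $S=-S^T$, under which divergence-freeness is automatic. The bad set (where $\na u_\ep\notin\{A,B\}$) is confined to this boundary layer and occupies a fraction $c\in(0,1)$ of $|R|$, independent of $L$ and $\eta$ provided $\eta/L$ is bounded by a suitable dimensional constant.

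\emph{Globalization and verification.} Apply Vitali's theorem (Theorem \ref{Vitali}) to the family of rescaled/translated copies of $R$ of arbitrarily small diameter to obtain a disjoint family $\{R_k\}_{k\in\N}\subset\Om$ with $|\Om\setminus\bigcup_kR_k|=0$. Set $u_\ep:=Cx$ on $\Om\setminus\bigcup_kR_k$ and $u_\ep:=$(the rescaled reference block) on each $R_k$; boundary matching on each $\partial R_k$ ensures that $u_\ep$ is globally Lipschitz and equals $Cx$ on $\partial\Om$, giving (ii). Properties (i) and (iii) follow from the block properties; (iv) follows because the bad set on each $R_k$ is a fixed fraction $c<1$ of $|R_k|$, so the global bad set has measure $\le c|\Om|$; (v) follows by choosing the Vitali cover with $\mathrm{diam}\,R_k$ sufficiently small, since the perturbation amplitude $\|u_\ep-Cx\|_{L^\infty(R_k)}$ scales linearly with $\mathrm{diam}\,R_k$.

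\emph{Main obstacle.} The delicate point is the explicit piecewise-affine construction of the lateral boundary layer: one must interpolate from the pure laminate (with gradients $A$ and $B$ on a rank-one segment of the trace-zero hyperplane) to the constant gradient $C\in[A,B]$ across lateral interfaces whose normals are perpendicular to $n=e_1$. Since the rank-one jump $A-C=-\lambda\,a\otimes e_1$ is not of the form $v\otimes e_j$ for any $j\ge 2$, a direct jump in the lateral direction is incompatible with continuity; instead the layer must be further subdivided into sub-cells with rank-one jumps in the $e_1$-direction and ``ramp'' cells in the lateral directions, producing a finite list of trace-zero gradients all within distance $\ep$ of $[A,B]$. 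This is the classical core of such lamination constructions (cf.\ \cite{MuSv2}); the trace-zero constraint does not obstruct it because $a\perp n$ keeps the whole construction inside $\MNN_0$.
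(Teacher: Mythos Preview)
Your outline is a legitimate strategy, but it diverges from the paper's proof and leaves the central step unresolved. The paper (following Pompe \cite{Po}, and written out only for $N=2$) does \emph{not} build a laminate on a thin parallelepiped with a boundary layer. Instead it constructs, on an equilateral triangle $T$, a single explicit piecewise affine divergence-free field $u^\ep$ vanishing on $\partial T$, with \emph{all} gradient entries of order $\ep^3$ (so automatically within $\ep$ of the segment $[A,B]$). It then applies an \emph{anisotropic} linear rescaling $S_\ep=\mathrm{diag}(\sqrt{m_\ep},1/\sqrt{m_\ep})$ to stretch the single entry $\partial_y u_1^\ep$ up to order $1$ while keeping the other entries below $\ep$; this is what produces a gradient taking values near $\{A,B\}$ on a fixed fraction of the domain, with the remaining fraction giving the dimensional constant $c=5/6$. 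Vitali covering and the reductions $C\neq 0$, $A-B\neq E$ are then handled exactly as you describe.

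The point you flag as the ``main obstacle'' is precisely what the paper's anisotropic-rescaling trick sidesteps. In your scheme you must build a piecewise affine, trace-free boundary layer whose gradients are already $\ep$-close to $[A,B]$; the natural attempts (multiplying the sawtooth by a lateral cutoff, or passing to a stream function $\Phi=F(x_1)\psi(x')$) either destroy divergence-freeness or produce a map that is only piecewise quadratic, not piecewise affine. You do not actually carry out the cell-by-cell rank-one interpolation you describe, and the reference to \cite{MuSv2} is not a substitute: the constructions there are for the determinant constraint and do not transfer verbatim. By contrast, the paper's approach decouples the two requirements: first build \emph{any} small piecewise-affine divergence-free field with zero boundary values on a model set (the triangle suffices), then rescale anisotropically. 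If you want to repair your argument along its own lines, this rescaling idea is the missing ingredient; otherwise your boundary-layer step remains a genuine gap.
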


For the proof of Lemma \ref{Pompe} it is useful to construct an explicit divergence-free vector field $u$
on the equilateral triangle $T$ with vertices
\begin{equation}\label{vertici_triangolo}
V_1=\left(-1,-\frac1{\sqrt 3}\right),\quad V_2=\left(1,-\frac1{\sqrt 3}\right),\quad
V_3=\left(0,\frac2{\sqrt 3}\right).
\end{equation}
Let $V_4$, $V_5$, and $V_6$ be the middle points of the segments joining the centre $O$ of $T$ to the middle points
of $[V_2,V_3]$, $[V_3,V_1]$, and $[V_1,V_2]$, respectively. 
% that is
% \begin{equation*}
% V_4=\left(\frac14,\frac1{4\sqrt 3}\right),\quad V_5=\left(-\frac14,\frac1{4\sqrt 3}\right),\quad 
% V_6=\left(0,-\frac1{2\sqrt 3}\right).
% \end{equation*}
We divide $T$ into the triangles $T_i$, $i=1,...,7$, illustrated in Figure \ref{disegnino}.
% \begin{equation*}
% T_1:=\bigtriangleup^{V_1V_2V_6},\quad T_2:=\bigtriangleup^{V_2V_4V_6},\quad T_3:=\bigtriangleup^{V_4V_5V_6},
% \quad T_4:=\bigtriangleup^{V_2V_3V_4}
% \end{equation*}
% \begin{equation*}
% T_5:=\bigtriangleup^{V_3V_4V_5},\quad T_6:=\bigtriangleup^{V_1V_3V_5},\quad T_7:=\bigtriangleup^{V_1V_5V_6}.
% \end{equation*}
They are such that
\begin{equation}\label{aree}
|T_1|=|T_4|=|T_6|=\frac{|T|}6,\quad|T_2|=|T_5|=|T_7|=\frac7{48}|T|,\quad|T_3|=\frac{|T|}{16}.
\end{equation}
Consider the following vectors representing displacements applied at the points $V_4$, $V_5$, $V_6$, respectively:
\begin{equation}\label{eq:diplacements_delta}
u_4^{\delta}:=\frac{\delta}2(-1,\sqrt 3),\qquad u_5^{\delta}:=-\frac{\delta}2(1,\sqrt 3),\qquad 
u_6^{\delta}:=\delta(1,0).
\end{equation}
These three vectors have been chosen in such a way they have the same length $\delta$ and
$u_4^{\delta}$, $u_5^{\delta}$, $u_6^{\delta}$ have the same direction as 
$V_3-V_2$, $V_1-V_3$, $V_2-V_1$, respectively.
Finally, we define $u$ as the piecewise affine function defined by
\begin{equation}\label{def_u_triang}
u(V_1)= u(V_2)=u(V_3)=0,\qquad u(V_i)=u_i^{\delta},\quad i=4,5,6.
\end{equation}
It is obvious that $u=0$ on $\partial T$.  Moreover,
using the following lemma it is easy to check that $\divv u=0$ a.e.~in $T$.

\begin{figure}[htbp]
\begin{center}
\includegraphics[height=6cm]{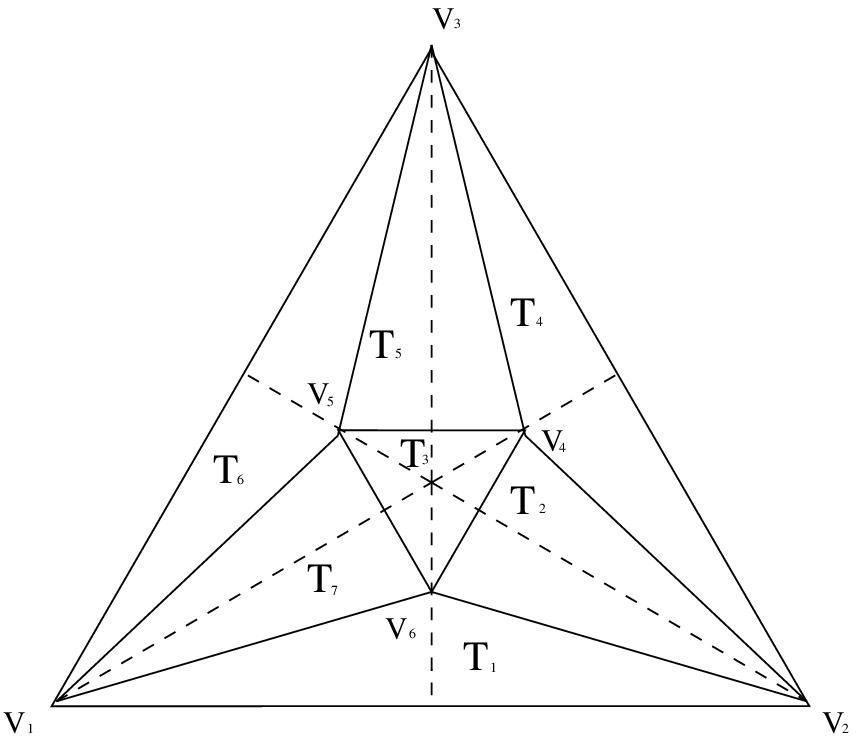}
\hspace{1.2cm}
\includegraphics[height=5.5cm]{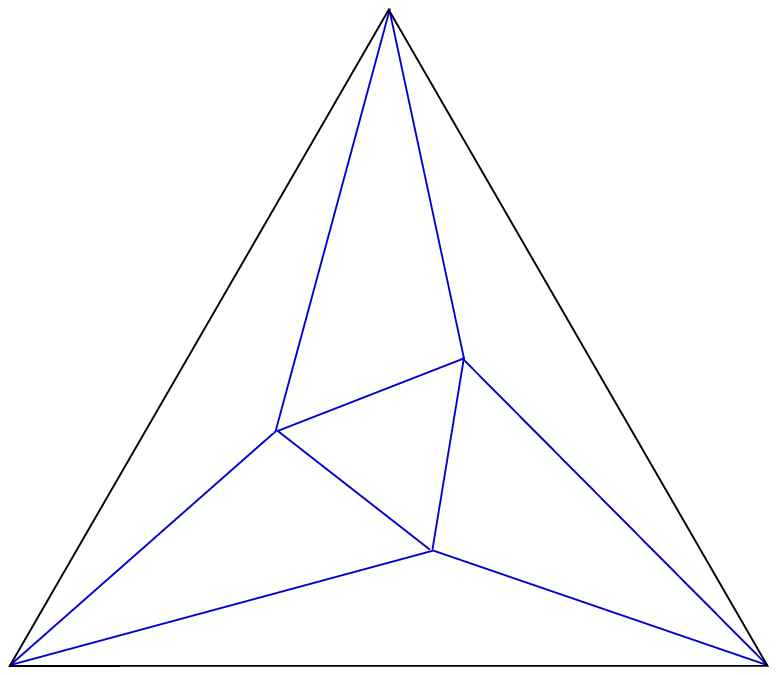}
\end{center}
\caption{Triangle $T$ and a prototype of a piecewise affine vector field $u$ such that $\divv u=0$
a.e.~in $T$ and $u=0$ on $\pa T$, used in the proof of Lemma \ref{Pompe}. 
In the first picture the triangle is underformed, in the second picture
the triangle is deformed via the displacement vector field $u$.
} 
\label{disegnino}
\end{figure}

\begin{lemma}\label{div_T}
Consider a triangle $T\subset\R^2$ with vertices $V_1$, $V_2$, and $V_3$,
and an affine function $u:T\to\R^2$ such that $u(V_1)=u(V_2)=0$.
Then, 
\begin{equation*}
\divv u=0\quad\mbox{if and only if}\quad u(V_3)\mbox{ is parallel to }V_1-V_2. 
\end{equation*}
\end{lemma}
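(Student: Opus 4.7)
The plan is to exploit the fact that $\divv u$ is the trace of the constant gradient $A := \nabla u$ and analyze $A$ in a well-chosen orthonormal basis. Since $u$ is affine, I write $u(x) = A(x - V_1)$, which automatically satisfies $u(V_1)=0$. The assumption $u(V_2)=0$ then becomes $A(V_2 - V_1) = 0$, so the nonzero vector $V_2 - V_1$ lies in $\ker A$.

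Let $e_1 := (V_2 - V_1)/|V_2 - V_1|$ and let $e_2$ be a unit vector orthogonal to $e_1$. In this orthonormal basis, the matrix of $A$ has the form
\[
A = \begin{bmatrix} 0 & a \\ 0 & b \end{bmatrix},
\]
because $Ae_1 = 0$. Consequently $\divv u = \tr A = b$, and this is the quantity I must relate to the direction of $u(V_3)$.

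Next I would decompose $V_3 - V_1 = \alpha e_1 + \beta e_2$ and note that $\beta \ne 0$ since $V_3$ is not collinear with $V_1$ and $V_2$ (the triangle being non-degenerate). A direct computation gives
\[
u(V_3) = A(V_3 - V_1) = \beta(a e_1 + b e_2).
\]
Since $\beta \ne 0$, the vector $u(V_3)$ is parallel to $e_1$, equivalently to $V_1 - V_2$, if and only if its $e_2$-component vanishes, i.e., $\beta b = 0$, which is equivalent to $b = 0$. Combining with $\divv u = b$ yields the claimed equivalence in both directions.

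The argument is essentially a one-line linear algebra observation once the basis is chosen; there is no substantive obstacle. The only point to be slightly careful about is the non-degeneracy of the triangle, which guarantees $\beta \ne 0$ and thus allows the equivalence to go through without a spurious case distinction.
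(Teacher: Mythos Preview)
Your proof is correct. Both you and the paper begin by aligning the coordinate frame with the edge $V_1V_2$, but from there the arguments diverge: the paper computes $\int_{\partial T} u\cdot\nu$ via the Divergence Theorem, uses the identity $\sum_i |V_{i}-V_{i+1}|\,\nu_i=0$ to reduce the boundary integral to $-|V_1-V_2|\,u(V_3)\cdot\nu_1$, and reads off that $\tr\nabla u$ is proportional to the component of $u(V_3)$ orthogonal to $V_1-V_2$. You instead work pointwise: from $A e_1=0$ the matrix is upper-triangular with $\tr A$ equal to the $(2,2)$ entry, and a direct evaluation of $u(V_3)=\beta A e_2$ gives the equivalence. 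Your route is more elementary and avoids any integration; the paper's route has the mild conceptual advantage of showing the result as an instance of the divergence theorem, which is thematically consistent with the rest of the section on divergence-free fields, but for this particular lemma your linear-algebra argument is arguably cleaner.
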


\begin{proof}
Suppose for simplicity that $V_1-V_2$ is parallel to the first vector of the canonical basis of $\R^2$. 
Let $\nu_1$, $\nu_2$, and $\nu_3$ be the outer unit normals on the sides $[V_1,V_2]$, 
$[V_2,V_3]$, and $[V_3,V_1]$, respectively, so that 
\begin{equation}\label{pre_Pompe1}
\nu_1=(0,a),\qquad\mbox{with }a=1\mbox{ or }a=-1.
\end{equation}
Since $\na u$ is constant, from the Divergence Theorem we infer that
\begin{equation}\label{pre_Pompe2}
2|T|\tr\na u=u(V_3)\cdot(\nu_2|V_2-V_3|+\nu_3|V_3-V_1|).
\end{equation}
Using the equivalence
$
\nu_1|V_1-V_2|+\nu_2|V_2-V_3|+\nu_3|V_3-V_1|=0,
$
we obtain from (\ref{pre_Pompe2}) that
$2|T|\tr\na u(x)=-|V_1-V_2|u(V_3)\cdot\nu_1$. In view of (\ref{pre_Pompe1}), 
this implies that $\divv u=0$ if and only if the second component of $u(V_3)$ is zero.
\end{proof}
 
\begin{proof}[Proof of  Lemma \ref{Pompe}]
We follow \cite{Po} and provide the explicit proof in the case $N=2$ for the readers' convenience.

Here, we use the notation $(x,y)$ or $(\xi,\eta)$ for a point of $\R^2$,
and we consider $\MM$ endowed with the $l_{\infty}$
norm denoted by $|\cdot|_{\infty}$. It is not restrictive to suppose that
${\rm dist}$ is the distance corresponding to this norm.
The proof is divided into three cases.

\noindent{\bf Case 1.} Consider the matrix 
\begin{equation}\label{eq:matrix_E}
E:=
\left(
\begin{array}{cc}
0 & 1 \\
0 & 0
\end{array}
\right),
\end{equation} 
and suppose that $A-B=E$ and $C=0$. In this case, equation (\ref{Clambda}) gives that
$A=\lambda E$ and $B=(\lambda-1)E$, and
\begin{equation*}
{\rm dist}(M,[A,B])=\min_{0\leq\mu\leq1}|M+(\mu-\lambda)E|_{\infty},\quad\mbox{for every }M\in\MM.
\end{equation*}
From the definition of $E$ we have in particular that
\begin{equation}\label{dist_seg}
{\rm dist}(M,[A,B])=\max\{|M_{11}|,|M_{21}|,|M_{22}|\},\qquad\mbox{if }\ \lambda-1\leq M_{12}\leq\lambda.
\end{equation}
The idea is to construct a piecewise affine function $w_{\ep}$ which satisfies \eqref{tesi_pompe(i)}-\eqref{tesi_pompe(v)} on a
compact set $T_{\ep}$ with $|T_{\ep}|>0$, and then conclude the proof of Case 1 applying
Theorem \ref{Vitali}. 
Consider the piecewise affine function $u$ of components $(u_1,u_2)$ defined by 
(\ref{eq:diplacements_delta})-(\ref{def_u_triang}) on
the triangle $T$ with vertices (\ref{vertici_triangolo}). Computing the explicit expression of $u$ we get that 
\begin{equation*}
||\na u||_{L^{\infty}(T;\MM)}={\rm ess\,sup}_{(x,y)\in T}|\na u(x,y)|_{\infty}
     =\frac{\pa{u_1}}{\pa y}(x,y),\quad\mbox{for every }(x,y)\in T_1,
\end{equation*}
%l'espressione esplicita di $u$ serve qui e per (\ref{per_dist_seg}).
where $\frac{\pa{u_1}}{\pa y}(x,y)=2\sqrt 3\delta$ for every $(x,y)\in T_1$.
Choosing $\delta=\frac{\ep^3}{2\sqrt 3}$ and relabelling $u$ by $u^{\ep}$, we obtain that 
\begin{equation}\label{suTuno}
\frac{\pa u_1^{\ep}}{\pa y}=\ep^3\quad\mbox{on}\quad T_1,
\end{equation}
and that
\begin{equation}\label{uminep}
||\na u^{\ep}||_{L^{\infty}(T;\MM)}=\ep^3,\qquad\quad
||u^{\ep}||_{L^{\infty}(T;\R^2)}\leq\hat c\,\ep^3,
\end{equation}
for some constant $\hat c>0$ independent of $\ep$.
Direct computations show that
\begin{equation}\label{per_dist_seg}
\sup_T\frac{\pa u_1^{\ep}}{\pa y}=
\sup\left\{\left|\frac{\pa u_1^{\ep}}{\pa y}\right|\,:\,\frac{\pa u_1^{\ep}}{\pa y}\leq0\right\}=\ep^3.
\end{equation}
Seting $m_\ep:=\ep^3\max\{1/\lambda,1/(1-\lambda)\}$ and choosing
$
\ep^3<\min\{\lambda,1-\lambda\},
$
we have that
\begin{equation}\label{aminuno}
0<m_{\ep}<1.
\end{equation}
Then, define
\begin{equation*}
S_{\ep}:=
\left(
\begin{array}{cc}
\sqrt{m_{\ep}} & 0 \\
0 & \frac1{\sqrt{m_{\ep}}}
\end{array}
\right)\ \ \ \mbox{ and }\ \ \ \ T_{\ep}:=S_{\ep}^{-1}(T),
\end{equation*}
and note that the function
\begin{equation*}
w^{\ep}(\xi,\eta):=S_{\ep}^{-1}u^{\ep}\left(S_{\ep}\left({\xi\atop\eta}\right)\right),
\qquad\mbox{for every }(\xi,\eta)\in T_{\ep},
\end{equation*}
satisfies conditions \eqref{tesi_pompe(i)}-\eqref{tesi_pompe(v)}. Indeed, the construction of
$u^{\ep}$ implies that $\divv w^{\ep}=0$ a.e.~on $T_{\ep}$ and $w^{\ep}=0$ on $\pa T_{\ep}$.
For what concerns property \eqref{tesi_pompe(iii)}, note that
\begin{equation*}
\na w^{\ep}(\xi,\eta)=
\left(
\begin{array}{cc}
\frac{\pa u_1^{\ep}}{\pa x} & \frac1{m_{\ep}}\frac{\pa u_1^{\ep}}{\pa y}\\
m_{\ep}\frac{\pa u_2^{\ep}}{\pa x} & \frac{\pa u_2^{\ep}}{\pa y}
\end{array}
\right)_{|\left(\sqrt{m_{\ep}}\xi,\frac{\eta}{\sqrt{m_{\ep}}}\right)},\qquad\mbox{for every }(\xi,\eta)\in T_{\ep},
\end{equation*}
so that
\begin{equation}\label{partminep}
\left|\frac{\pa w^{\ep}_1}{\pa\xi}\right|, \left|\frac{\pa w^{\ep}_2}{\pa\xi}\right|, 
\left|\frac{\pa w^{\ep}_2}{\pa\eta}\right|<\ep,
\end{equation}
in view of (\ref{uminep}) and (\ref{aminuno}). Moreover, (\ref{per_dist_seg}) and the definition of $m_{\ep}$
give that $\lambda-1\leq\frac{\pa w^{\ep}_1}{\pa\eta}\leq\lambda$. This fact, together with  
(\ref{dist_seg}) and (\ref{partminep}), implies that \eqref{tesi_pompe(iii)} is true for $w^{\ep}$ a.e.~in $T_{\ep}$. 
Also, equivalence (\ref{suTuno}) gives that, for every $(\xi,\eta)\in S_{\ep}^{-1}(T_1)\subseteq T_{\ep}$,
${\rm dist}(\na w^{\ep}(\xi,\eta),\{A,B\})<\ep$, and in turn that 
\begin{equation*}
|\{(\xi,\eta)\in T_{\ep}\,:\,{\rm dist}(\na w^{\ep}(\xi,\eta),\{A,B\})\geq\ep\}|\leq|T_{\ep}\setminus S_{\ep}^{-1}(T_1)|
=\frac56|T_{\ep}|,
\end{equation*}
where the last equality is due to (\ref{aree}) and to the fact that $S_{\ep}^{-1}$ is volume-preserving.
This proves \eqref{tesi_pompe(iv)}.
From the definition of $w^{\ep}$ and from \eqref{uminep}, we infer that
\begin{equation*}
||w^{\ep}||_{L^{\infty}(T_{\ep};\R^2)}\leq
\frac{||u^{\ep}||_{L^{\infty}(T;\R^2)}}{\sqrt{m_{\ep}}}\leq\frac{\ep^{\frac32}\hat c}{\max\{\lambda,1-\lambda\}},
\end{equation*}
so that $||w_{\ep}||_{L^{\infty}(T_{\ep};\R^2)}<\ep$, if $\ep$ is sufficiently small,
and property (\ref{tesi_pompe(v)}) follows.

\noindent We remark that the function
$(\xi,\eta)\mapsto\lambda w^{\ep}(\xi/\lambda,\eta/\lambda)$ satisfies \eqref{tesi_pompe(i)}-\eqref{tesi_pompe(v)} on 
the dilated set $\lambda T_{\ep}$
for every $\lambda>0$, and the same holds for the function $(\xi,\eta)\mapsto w^{\ep}(\xi-\hat\xi,\eta-\hat\eta)$ 
on the translated set $T_{\ep}+(\hat\xi,\hat\eta)$. 
By Theorem \ref{Vitali}, there exists a disjoint numerable union 
$\bigcup_i\mathcal T_{\ep}^i\subseteq\Om$ of dilated and translated sets of $T_{\ep}$ such that
\begin{equation*}
\left|\Om\setminus\bigcup_i\mathcal T_{\ep}^i\right|=0,
\end{equation*}
and from the previous remark there exist piecewise affine Lipschitz maps 
$w^{\ep}_i:\mathcal T_{\ep}^i\to\R^2$ satisfying \eqref{tesi_pompe(i)}-\eqref{tesi_pompe(v)}
on $\mathcal T_{\ep}^i$.
Therefore, the function $w_{\ep}:\Om\to\R^2$ defined as $w_{\ep}=w^{\ep}_i$ on $\mathcal T_{\ep}^i$
for each $i$ satisfies \eqref{tesi_pompe(i)}-\eqref{tesi_pompe(v)} on $\Om$. 

\noindent{\bf Case 2.} Here, suppose $C=0$. Since $A$ and $B$ are rank-one connected, 
$0$ is an eigenvalue of $A-B$ which may have algebraic multiplicity
equal to either $1$ or $2$. The Jordan Decomposition Theorem tells us that, in the first case, there exists
an invertible matrix $L$ and $\mu\in\R\setminus\{0\}$ such that 
$A-B=L^{-1}\left(
\begin{array}{cc}
\mu & 0\\
0 & 0
\end{array}
\right)L$.
But this is impossible, because ${\rm tr}(A-B)=0$. Therefore, we have that
$A-B=L^{-1}EL$, where $E$ is defined in (\ref{eq:matrix_E}), for some invertible matrix $L$. 
Let $w$ be given by Case 1 and satisfying 
conditions \eqref{tesi_pompe(i)}-\eqref{tesi_pompe(v)},
on a rectangle $R$, for $\hat A:=LAL^{-1}$ and $\hat B:=LBL^{-1}$ (note that $\hat A-\hat B=E$ and
$(1-\lambda)\hat A+\lambda\hat B=0$). It is easy to verify that 
$u(\xi,\eta):=L^{-1}\left(w\left(L\left(\xi\atop\eta\right)\right)\right)$ satisfies 
conditions \eqref{tesi_pompe(i)}-\eqref{tesi_pompe(v)} on $L^{-1}(R)$.
Using again Theorem \ref{Vitali} and covering $\Om$ by dilated and translated copies of $L^{-1}(R)$,
we obtain a function satisfying conditions \eqref{tesi_pompe(i)}--\eqref{tesi_pompe(v)} on $\Om$.

\noindent{\bf Case 3.}
Finally, suppose $C$, $A$, and $B$ to be generic and satisfying the the hypotheses. The matrices $\hat A:=A-C$ and $\hat B:=B-C$
are such that $(1-\lambda)\hat A+\lambda\hat B=0$. Thus, from Case 2, there exists $w:\Om\to\R^2$
piecewise affine and Lipschitz satisfying \eqref{tesi_pompe(i)}-\eqref{tesi_pompe(v)} 
with $\hat A$, $\hat B$ and $0$ in place of $A$, $B$ and $C$, respectively. 
Then $u(x,y):=w(x,y)+C\left({x\atop y}\right)$ satisfies \eqref{tesi_pompe(i)}-\eqref{tesi_pompe(v)} on $\Om$.
\end{proof}

Before stating the next lemma, let us remark
that if $U$ is relatively open in $\MNN_0$, then $U^{lc}$ is relatively open in $\MNN_0$ too. 
Indeed, suppose that $U$ is relatively open in $\MNN_0$, consider $C\in U^{(1)}$, and suppose that $C+D\in\MNN_0$. 
We have that $C=(1-\lambda)A+\lambda B$ for some $0\leq\lambda\leq1$
and some $A$, $B\in U$. 
Note that $A+D$, $B+D\in\MNN_0$ and that $A+D$, $B+D\in U$ if $|D|$ is sufficiently small. 
Therefore, $C+D\in U^{(1)}$ if $|D|$ is sufficiently small, because
\begin{equation*}
C+D=(1-\lambda)(A+D)+\lambda(B+D),
\end{equation*}
and ${\rm rank}[(A+D)-(B+D)]=1$. By induction we have that $U^{lc}$ is relatively open in $\MNN_0$.

\begin{lemma}\label{4.1MuSv}
Let $U\subset\MNN_0$ be bounded and open in $\MNN_0$, and let $C\in U^{lc}$.
For every $\ep>0$ there exists a piecewise affine Lipschitz map $u_{\ep}:\Om\to\R^N$ such that
\begin{align}
&\na u_{\ep}\in U^{lc}\quad\mbox{a.e.~in }\Om,\label{tesi:4.1(i)}\\
&u_{\ep}(x)=Cx\quad\mbox{for every }x\in\pa\Om,\label{tesi:4.1(ii)}\\
&\displaystyle|\{x\in\Om\,:\,\na u_{\ep}(x)\notin U\}|<\ep|\Om|,\label{tesi:4.1(iii)}\\
&\displaystyle\sup_{x\in\Om}|u_{\ep}(x)-Cx|<\ep.\label{tesi:4.1(iv)}
\end{align}
\end{lemma}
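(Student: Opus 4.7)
The plan is to induct on the smallest integer $k \geq 0$ such that $C \in U^{(k)}$, using Lemma~\ref{Pompe} as the basic building block together with the relative openness of $U^{lc}$ in $\MNN_0$ noted in the paragraph preceding the lemma. The base case $k = 0$ is trivial: since $C \in U$, the affine map $u_\ep(x) := Cx$ satisfies (i)--(iv) with empty bad set in (iii) and zero $L^\infty$-discrepancy in (iv).

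For the inductive step with $k \geq 1$, I would first write $C = (1-\lambda) A + \lambda B$ with $A, B \in U^{(k-1)}$ and $\rank(A - B) = 1$. Since the compact segment $[A,B]$ lies in the relatively open set $U^{lc}$, one can choose $\delta_0 > 0$ so that the $\delta_0$-neighborhood of $[A,B]$ in $\MNN_0$ is contained in $U^{lc}$. Then, for $\delta \in (0, \min\{\delta_0, \ep/2\})$, Lemma~\ref{Pompe} produces a piecewise affine Lipschitz map $w : \Om \to \R^N$ with $w = Cx$ on $\pa\Om$, $\divv w = 0$ a.e., $\|w - Cx\|_\infty < \ep/2$, ${\rm dist}(\na w, [A,B]) < \delta$ a.e.~in $\Om$, and with $\na w$ outside the $\delta$-neighborhood of $\{A,B\}$ only on a set of measure at most $c|\Om|$, where $0 < c < 1$ is the universal constant of Lemma~\ref{Pompe}. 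Condition (i) already holds for $w$ by the choice of $\delta$.

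To enforce (iii), I would decompose $\Om$ into the Lipschitz subdomains $\{\Om_j\}$ on which $w$ is affine with gradient $F_j$, and, on each $\Om_j$ with $|F_j - A| < \delta$ (symmetrically for those with $|F_j - B| < \delta$), apply the inductive hypothesis on $\Om_j$ to the matrix $A \in U^{(k-1)}$ with a small tolerance $\eta > 0$, obtaining $v_A^{(j)} : \Om_j \to \R^N$ matching $Ax$ on $\pa\Om_j$ with $\na v_A^{(j)} \in U^{lc}$ a.e.~and $|\{\na v_A^{(j)} \notin U\}| \leq \eta|\Om_j|$. Replacing $w$ on $\Om_j$ by $v_A^{(j)}(x) + (F_j - A)x + c_j$, where $c_j$ is the translation constant of the affine piece $w|_{\Om_j}$, preserves the boundary values of $w$ on $\pa\Om_j$ and yields the gradient $\na v_A^{(j)} + (F_j - A)$, a small perturbation of $\na v_A^{(j)}$. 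On the residual ``bad'' subdomains of $w$, of total measure at most $c|\Om|$, a further outer iteration of the same construction contracts the bad set by a factor strictly less than one, so finitely many outer rounds drive $|\{\na u_\ep \notin U\}|$ below $\ep|\Om|$ while keeping the $L^\infty$-increments summable so that the limit is Lipschitz and within $\ep$ of $Cx$.

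The main obstacle is the stitching step: shifting $\na v_A^{(j)}$ by $F_j - A$ may push it out of $U^{lc}$ or out of $U$, since $U^{(k-1)}$ is not relatively open in $\MNN_0$ in general, and the openness of $U^{lc}$ and of $U$ only supplies uniform safety neighborhoods around compact subsets. I would resolve this by strengthening the inductive hypothesis so that $\na v_A^{(j)}$ takes values in a prescribed compact subset of $U^{lc}$, and in a prescribed compact subset of $U$ on the large ``good'' portion of $\Om_j$. Using the boundedness of $U$ and the relative openness of $U^{lc}$ and $U$ in $\MNN_0$, a sufficiently small choice of $\delta$ (depending on these compact sets) keeps the post-perturbation gradients in the correct sets, closing the induction.
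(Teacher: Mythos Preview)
Your overall plan---induction on the lamination order $k$, with Lemma~\ref{Pompe} supplying the basic laminate and an outer iteration contracting the residual bad set by the fixed factor $c<1$---is correct and is precisely the paper's strategy. The paper carries out the case $C\in U^{(1)}$ in detail (iterating Lemma~\ref{Pompe} directly) and then passes to higher $k$ by what it calls a ``simple inductive argument''.

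The difficulty you raise in the stitching step is, however, self-inflicted. The remark immediately preceding the lemma shows that if $U$ is relatively open in $\MNN_0$ then so is each $U^{(j)}$; your assertion that ``$U^{(k-1)}$ is not relatively open in $\MNN_0$ in general'' is therefore false in the present setting. Once you use this, the shift disappears entirely: on a good subdomain $\Om_j$ with $|F_j-A|<\delta$ one has $F_j\in U^{(k-1)}$ for $\delta$ small (depending only on $A$, by openness of $U^{(k-1)}$), so you can apply the inductive hypothesis on $\Om_j$ with boundary matrix $F_j$ rather than $A$. The resulting $v_j$ satisfies $v_j(x)=F_jx$ on $\partial\Om_j$, $\nabla v_j\in U^{(k-1)}\subset U^{lc}$ a.e., and $|\{\nabla v_j\notin U\}|<\eta|\Om_j|$; it splices into $w$ (after adding the constant $c_j$) with exact boundary matching and no perturbation of gradients, so (\ref{tesi:4.1(i)}) and (\ref{tesi:4.1(iii)}) follow immediately. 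Likewise, on each bad subdomain the constant gradient lies in the open set $U^{(k)}$, so the outer iteration can be restarted there with a fresh rank-one decomposition into $U^{(k-1)}$.

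Your proposed workaround---strengthening the inductive hypothesis so that the gradients lie in a prescribed compact subset of $U^{lc}$---is unnecessary and not obviously sound: the compact sets would vary across the countably many outer iterations (each bad subdomain carries its own decomposition $C'=(1-\lambda')A'+\lambda'B'$ with $A',B'\in U^{(k-1)}$ depending on $C'$), and there is no evident uniform control keeping the closure of the union of all the shifted compacta inside the open set $U^{lc}$.
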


From the proof of this lemma it is clear that the fact that $U^{lc}$ is open in $\MNN_0$ is a 
key condition to obtain the result. At a later stage, this condition is
replaced by the requirement that $U$ admits a suitable approximation $\{U_i\}$ by sets $U_i$ open in $\MNN_0$.

\begin{proof}
Suppose first that $C\in U^{(1)}$, where $U^{(1)}$ is the set of first order laminates of $U$,
and let us prove that properties (\ref{tesi:4.1(i)})-(\ref{tesi:4.1(iv)})
with $U^{(1)}$ in place of $U^{lc}$ hold for a certain $u_{\ep}:\Om\to\R^N$ piecewise affine and Lipschitz.

\noindent Consider the nontrivial case 
$C=(1-\lambda)A+\lambda B$ for some $0<\lambda<1$, $A$, $B\in U$. 
Given $\ep>0$, by Lemma \ref{Pompe} there exists a piecewise affine Lipschitz map 
$w_{\ep}^{(1)}:\Om\to\R^N$ satisfying conditions \eqref{tesi_pompe(i)}-\eqref{tesi_pompe(v)} with $\ep/2$. 
In particular, $w_{\ep}^{(1)}$ fulfils \eqref{tesi:4.1(ii)}, it is such that 
\begin{equation}\label{***}
\sup_{x\in\Om}|w^{(1)}(x)-Cx|<\frac{\ep}2,
\end{equation} 
and satisfies property \eqref{tesi:4.1(i)} with $U^{(1)}$ in place of $U^{lc}$, in view of
\eqref{tesi_pompe(i)}, \eqref{tesi_pompe(iii)}, and of the openness of $U^{(1)}$ in $\MNN_0$. 
Also, note that 
$\{w_{\ep}^{(1)}\notin U\}\subseteq\{{\rm dist}(\na w_{\ep}^{(1)},\{A,B\})\geq\ep\}$,  
again in view of the openness of $U$,
so that \eqref{tesi_pompe(iv)} implies 
\begin{equation}\label{cond_interm_set}
|\{x\in\Om\,:\,\na w_{\ep}^{(1)}(x)\notin U\}|\leq c|\Om|,
\end{equation}  
where $0<c<1$ is a constant depending only on the dimension $N$.
Building on $w_{\ep}^{(1)}$, the next part of the proof consists in an iterative process
which improves (\ref{cond_interm_set}) to (\ref{tesi:4.1(iii)}).
To simplify the notation, we write $w^{(1)}$ in place of $w_{\ep}^{(1)}$.
% while $w_k^{(1)}$ stands for another function.

\noindent Since $w^{(1)}$ is piecewise affine, there exist countably many mutually
disjoint Lipschitz domains $\Om_k\subseteq\Om$ such that
$w_k^{(1)}:=w_{|\Om_k}^{(1)}$ is affine and $\left|\Om\setminus\bigcup_k\Om_k\right|=0$.
If $\left\{\Om_k^{(1)}\right\}_k\subseteq\{\Om_k\}$ are the sets where $\na w^{(1)}\notin U$,
then by (\ref{cond_interm_set})
\begin{equation}\label{somme_omj_leq}
\sum_k\left|\Om_k^{(1)}\right|
%=\left|\left\{x\in\Om\,:\,\na w^{(1)}(x)\notin U\right\}\right|
\leq c|\Om|.
\end{equation}
Applying again Lemma \ref{Pompe} on each $\Om_k^{(1)}$, with $\frac{\ep}4$ in place of $\ep$, 
we find $w_k^{(2)}:\Om_k^{(1)}\to\R^N$ piecewise affine and Lipschitz such that 
$\na w_k^{(2)}\in U^{(1)}$ a.e.~in $\Om_k^{(1)}$, $w_k^{(2)}=w^{(1)}$ on $\pa\Om_k^{(1)}$,
\begin{equation}\label{somme_omj_leq_j}
\left|\left\{x\in\Om_k^{(1)}\,:\,\na w_k^{(2)}(x)\notin U\right\}\right|\leq c|\Om_k^{(1)}|,
\end{equation}
and 
\begin{equation}\label{per_sup_ep}
\sup_{\Om_k^{(1)}}|w_k^{(2)}-w^{(1)}|<\frac{\ep}4.
\end{equation}
Defining $w^{(2)}:\Om\to\R^N$ as
\begin{equation*}
w^{(2)}:=
\left\{
\begin{array}{ll}
w^{(1)} & \mbox{on }\Om\setminus\bigcup_k\Om_k^{(1)},\\
w_k^{(2)} & \mbox{on }\Om_k^{(1)},
\end{array}
\right.
\end{equation*}
we obtain that $w^{(2)}$ is piecewise affine and Lipschitz, 
% because it can be seen as the
% limit of a sequence of equi-Lipschitz functions.
$\na w^{(2)}\in U^{(1)}$ a.e.~in $\Om$, and $w^{(2)}(x)=Cx$ for every $x\in\pa\Om$.
Also, from (\ref{***}) and (\ref{somme_omj_leq})-(\ref{per_sup_ep}) we get
\begin{equation*}
\left|\left\{x\in\Om\,:\,\na w^{(2)}(x)\notin U\right\}\right|=
\sum_k\left|\left\{x\in\Om_k^{(1)}\,:\,\na w_k^{(2)}(x)\notin U\right\}\right|
%\leq\sum_kc\left|\Om_k^{(1)}\right|
\leq c^2|\Om|,
\end{equation*}
and
\begin{equation*}
\sup_{x\in\Om}\left|w^{(2)}(x)-Cx\right|\leq
          \sup_{x\in\Om}\left\{\left|w^{(2)}(x)-w^{(1)}(x)\right|
                  +\left|w^{(1)}(x)-Cx\right|\right\}<\frac{\ep}2\left(1+\frac12\right).
\end{equation*}
Iterating this procedure gives that for every $m\in\N\setminus\{0\}$ there exists
a piecewise affine Lipschitz map
$w^{(m)}:\Om\to\R^N$ such that $\na w^{(m)}\in U^{(1)}$ a.e.~in $\Om$, $w^{(m)}(x)=Cx$ 
for every $x\in\pa\Om$, and  
\[
\left|\left\{x\in\Om\,:\,\na w^{(m)}(x)\notin U\right\}\right|\leq c^m|\Om|,
\qquad
\sup_{x\in\Om}|w^{(m)}(x)-Cx|
<\frac{\ep}2\sum_{i=0}^{m-1}\frac1{2^i}.
\]
Since $0<c<1$, then $c^m<\ep$ for $m$ sufficiently large. Setting $u_{\ep}:=w^{(m)}$ for such a big $m$,
we have obtained that $u_{\ep}$ satisfies \eqref{tesi:4.1(i)}-\eqref{tesi:4.1(iv)} with $U^{(1)}$
in place of $U^{lc}$. 

\noindent The proof of the lemma can be concluded
by a simple inductive argument which proves that if $C\in U^{(i)}$, where 
$C^{(i)}$ is the set of $i$-th order laminates of $U$, then there exists a piecewise affine Lipschitz
function satisfying \eqref{tesi:4.1(i)}-\eqref{tesi:4.1(iv)} with $U^{(i)}$
in place of $U^{lc}$. 
\end{proof}

By the same iterative method used in the proof of Lemma \ref{4.1MuSv} one can remove step by step
the set where $\na u\notin U$ obtaining the following theorem.

\begin{theorem}\label{aperti}
Let $U\subset\MNN_0$ be bounded and open in $\MNN_0$, and
suppose that $v:\Om\to\R^N$ is piecewise affine Lipschitz map such that
\begin{equation*}
\na v\in U^{lc}\quad\mbox{a.e.~in }\Om.
\end{equation*}
For every $\ep>0$ there exists a piecewise affine Lipschitz map $u_{\ep}:\Om\to\R^N$ such that
\begin{align}
&\na u_{\ep}\in U\quad\mbox{a.e.~in }\Om,\label{tesi:aperti(i)}\\
&u_{\ep}=v\quad\mbox{on }\pa\Om,\label{tesi:aperti(ii)}\\
&||u_{\ep}-v||_{\infty}<\ep.\label{tesi:aperti(iii)}
\end{align}
\end{theorem}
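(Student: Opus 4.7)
The plan is to iterate Lemma \ref{4.1MuSv}, starting from the piecewise affine datum $v$ and successively shrinking the bad set on which the gradient lies in $U^{lc}\setminus U$, finally passing to an $L^{\infty}$ limit. Since $v$ is piecewise affine, decompose $\Om=\bigcup_k\Om_k$ (up to a null set) as a countable disjoint family of Lipschitz subdomains on each of which $v$ is affine with gradient $C_k\in U^{lc}$. Applying Lemma \ref{4.1MuSv} on each $\Om_k$ with a parameter $\eta_1>0$ (the affine constant of integration can be adjusted to match $v$ on $\pa\Om_k$ without affecting the gradient), and gluing, yields a piecewise affine Lipschitz map $u^{(1)}$ with $u^{(1)}=v$ on $\pa\Om$, $\na u^{(1)}\in U^{lc}$ a.e., $\|u^{(1)}-v\|_{\infty}<\eta_1$, and bad set $B_1:=\{\na u^{(1)}\notin U\}$ of measure less than $\eta_1|\Om|$.

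Proceeding inductively, given $u^{(j)}$ piecewise affine Lipschitz with $u^{(j)}=v$ on $\pa\Om$, $\na u^{(j)}\in U^{lc}$ a.e., and bad set $B_j$ of measure $<\delta_j$, the subdivision pieces of $u^{(j)}$ lying in $B_j$ are Lipschitz subdomains on each of which $u^{(j)}$ is affine with gradient in $U^{lc}$. Applying Lemma \ref{4.1MuSv} on each such piece with parameter $\eta_{j+1}$ and letting $u^{(j+1)}$ equal $u^{(j)}$ on $\Om\setminus B_j$ defines a piecewise affine Lipschitz $u^{(j+1)}$ with $u^{(j+1)}=v$ on $\pa\Om$, $\na u^{(j+1)}\in U^{lc}$ a.e., $\|u^{(j+1)}-u^{(j)}\|_{\infty}<\eta_{j+1}$, and $B_{j+1}\subseteq B_j$ with $|B_{j+1}|<\eta_{j+1}\delta_j$. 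Choosing $\eta_j<\min\{1/2,\ep\,2^{-j}\}$ makes $\{u^{(j)}\}$ Cauchy in $L^{\infty}(\Om)$ with total oscillation less than $\ep$, and forces $|B_j|\to 0$; the sequence stabilizes pointwise on $\Om\setminus B$, where $B:=\bigcap_j B_j$ is a null set.

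Setting $u_{\ep}:=\lim_j u^{(j)}$ gives $u_{\ep}=v$ on $\pa\Om$ and $\|u_{\ep}-v\|_{\infty}<\ep$; outside $B$, $\na u_{\ep}$ coincides with $\na u^{(j)}\in U$ for $j$ large, so $\na u_{\ep}\in U$ a.e. The uniform Lipschitz bound follows from the boundedness of $U^{lc}$ (contained in the convex hull of the bounded set $U$). For the piecewise affine structure of $u_{\ep}$, collect across $j$ the subdivision pieces of $u^{(j)}$ that lie in $\Om\setminus B_j$: these are \emph{terminal}, in the sense that the construction never modifies $u^{(j)}$ on them at any later step (because $B_{k+1}\subseteq B_k$), so $u_{\ep}$ is affine on each of them. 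The main obstacle is precisely this bookkeeping: one must verify that the terminal pieces form a countable mutually disjoint Lipschitz family covering $\Om\setminus B$, and hence $\Om$ up to a null set, as required by Definition \ref{piecewiseaffine}. The monotonicity $B_{j+1}\subseteq B_j$ is exactly what makes the collection well defined and guarantees that its complement in $\Om$ coincides with $B$.
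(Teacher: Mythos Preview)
Your argument is correct and follows essentially the same iterative scheme as the paper: repeatedly apply Lemma~\ref{4.1MuSv} on the pieces where the gradient has not yet entered $U$, shrink the bad set geometrically, and pass to the $L^{\infty}$ limit while keeping track of the terminal affine pieces. The only organizational difference is that the paper first treats the case of affine $v$ and then reduces the piecewise affine case to it, whereas you handle the piecewise affine datum directly from the start; this is immaterial.
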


\begin{proof}
Consider first the case where $v$ is affine, so that $\na v(x)=Cx$ for every $x\in\Om$, 
for some $C\in U^{lc}$. Fixed $\ep>0$, by Lemma \ref{4.1MuSv} there exists a Lipschitz map 
$u^{(1)}:\Om\to\R^N$ such that $\na u^{(1)}\in U^{lc}$ a.e.~in $\Om$, $u^{(1)}=v$ on $\pa\Om$,
and such that $u_i^{(1)}:=u^{(1)}_{|\Om_i}$ is affine on countably many mutually
disjoint Lipschitz domains $\Om_i\subseteq\Om$ with $\left|\Om\setminus\bigcup_i\Om_i\right|=0$.
Note that we can write 
$\displaystyle
\Om=\bigcup_{i\in\mathcal A^{(1)}}\Om_i^{(1)}\cup \bigcup_{i\in\mathcal B^{(1)}}\Om_i^{(1)}\cup N^{(1)},
$
where 
$$
\mathcal A^{(1)}:=\left\{i\in\N\,:\,\na u_i^{(1)}\in U\right\},\quad
\mathcal B^{(1)}:=\left\{i\in\N\,:\,\na u_i^{(1)}\notin U\right\},\quad
|N^{(1)}|=0.
$$
Moreover, $u^{(1)}$ can be chosen in such a way that  
\begin{equation}\label{somme_omj_leq_00}
|M^{(1)}|
%<\frac{\ep}2|\Om|
<\ep|\Om|,\qquad\quad
||u^{(1)}-v||_{\infty}<\frac{\ep}2,
\end{equation}
where $M_1:=\bigcup_{i\in\mathcal B^{(1)}}\Om_i^{(1)}$.
Applying again Lemma \ref{4.1MuSv} on each $\Om_i^{(1)}$ with $i\in\mathcal B^{(1)}$,
with $\frac{\ep}4$ in place of $\ep$, we find $u_i^{(2)}:\Om_i^{(1)}\to\R^N$
piecewise affine and Lipschitz such that $\na u_i^{(2)}\in U^{lc}$, $u_i^{(2)}=u^{(1)}$ on $\pa\Om_i^{(1)}$,
and
\begin{equation}\label{per_sup_ep_00}
||u_i^{(2)}-u^{(1)}||_{L^{\infty}(\Om_i^{(1)};\R^2)}<\frac{\ep}4,
\qquad\quad
\{x\in\Om_i^{(1)}:\na u_i^{(2)}(x)\notin U\}\leq\ep|\Om_i^{(1)}|,
\end{equation}
for every $i\in\mathcal B^{(1)}$. 
Now, define $u^{(2)}:\Om\to\R^N$ by
\begin{equation*}
u^{(2)}=
\left\{
\begin{array}{ll}
u^{(1)} & \mbox{on }\displaystyle\bigcup_{i\in\mathcal A^{(1)}}\Om_i^{(1)}\cup N^{(1)},\\
u_i^{(2)} & \mbox{on }\Om_i^{(1)},\ i\in\mathcal B^{(1)}.
\end{array}
\right.
\end{equation*} 
Again we can write 
$\displaystyle M^{(1)}=\bigcup_{i\in\mathcal A^{(2)}}\Om_i^{(2)}\cup \bigcup_{i\in\mathcal B^{(2)}}\Om_i^{(2)}\cup N^{(2)}$,
where $u_i^{(2)}$ is affine on each $\Om_i^{(2)}$ and
$$
\mathcal A^{(2)}:=\left\{i\in\N\,:\,\na u_i^{(2)}\in U\right\},\quad
\mathcal B^{(2)}:=\left\{i\in\N\,:\,\na u_i^{(2)}\notin U\right\},\quad
|N^{(2)}|=0.
$$
Setting $M^{(2)}:=\bigcup_{i\in\mathcal B^{(2)}}\Om_i^{(2)}$, we obtain that
\begin{equation}\label{somme_omj_leq_j_00}
|M^{(2)}|=|\{x\in M^{(1)}\,:\,\na u^{(2)}\notin U\}|\leq\ep|M^{(1)}|\leq\ep^2|\Om|,
\end{equation}
that $u^{(2)}$ is a piecewise affine Lipschitz function such that
$\na u^{(2)}\in U^{lc}$ a.e.~in $\Om$, that $u^{(2)}=v$ on $\pa\Om$, and that
$$
||u^{(2)}-v||_{\infty}<\frac{\ep}2\left(1+\frac12\right).
$$
Note that $u^{(2)}=u^{(1)}$ on $\Om\setminus M^{(1)}$. 
By iterating this procedure, we find the piecewise affine Lipschitz function
\begin{equation*}
u^{(m)}:=
\left\{
\begin{array}{ll}
u^{(1)} & \mbox{on }\displaystyle\bigcup_{i\in\mathcal A^{(1)}}\Om_i^{(1)}\cup N^{(1)},\\
u^{(2)} & \mbox{on }\displaystyle\bigcup_{i\in\mathcal A^{(2)}}\Om_i^{(2)}\cup N^{(2)},\\
\vdots & \\
u^{(m-1)} & \mbox{on }\displaystyle\bigcup_{i\in\mathcal A^{(m-1)}}\Om_i^{(m-1)}\cup N^{(m-1)},\\
u_i^{(m)} & \mbox{on }\Om_i^{(m-1)},\ i\in\mathcal B^{(m-1)},
\end{array}
\right.
\end{equation*} 
where $M^{(m-1)}:=\bigcup_{i\in\mathcal B^{(m-1)}}\Om_i^{(m-1)}$ is such that
\begin{equation*}
|\{x\in\Om\,:\,\na u^{(m)}\notin U\}|\leq|M^{(m)}|\leq\ep^m|\Om|.
\end{equation*}
Moreover, $\{M^{(m)}\}$ is a strictly decreasing sequence of sets, $u^{(m)}=u^{(m-1)}$ on $\Om\setminus M^{(m-1)}$, and
\begin{equation*}
\na u^{(m)}\in U^{lc}\ \mbox{ a.e.~in }\ \Om,
\qquad
u^{(m)}=v\ \mbox{ on }\ \pa\Om, 
\qquad
||u^{(m)}-v||_{\infty}<\frac{\ep}2\sum_{i=0}^{m-1}\frac1{2^i}.
\end{equation*} 
From the above properties we infer that the sequence of functions $\{u^{(m)}\}$ defines in
the limit $m\to\infty$ a piecewise affine Lipschitz function on $\Om$ satisfying
(\ref{tesi:aperti(i)})-(\ref{tesi:aperti(iii)}).

\noindent To conclude the proof it remains to consider the case where $v$ is piecewise affine.
In this case, one can repeat the above argument on every domain where $v$ is affine.
\end{proof}

We are now in a position to prove Theorem \ref{generici}, where the condition that
$U\subset\MNN_0$ is open (and bounded) in $\MNN_0$ is replaced by the condition that
$K_0\subset\MNN_0$ admits an in-approximation $\{U_i\}$. 
The idea of the proof is to construct a solution of $\na u\in K_0$ by considering suitable solutions
of $\na u_i\in U_i$.
% such that $u_i=v$ on $\pa\Om$. 
% The existence of $u_i$ is guaranteed by Theorem \ref{aperti}
% and by the fact that $\na u_{i-1}\in U_{i-1}\subseteq U_i^{lc}$ (by condition (1) of Definition \ref
% {inapprox_forte}).
% The functions $u_i$ can show an increasing oscillatory behaviour. 
% Nevertheless, they can be chosen in such a way that a subsequence of $\{\na u_i\}$ converges pointwise % % a.e., so that
%the limit $u$ satisfies $\na u\in K_0$ a.e.~(by condition (3) of Definition \ref{inapprox_forte}). 

\begin{proof}[Proof of Theorem \ref{generici}]
As in the proof of Theorem \ref{aperti}, we can assume 
without loss of generality that $v$ is affine. 
Fix $\ep>0$. Since $\na v\in U_1\subseteq U_2^{lc}$, by Theorem \ref{aperti} there exists a piecewise affine Lipschitz map 
$u_2:\Om\to\R^N$ such that $\na u_2\in U_2$ a.e.~in $\Om$, $u_2=v$ on $\pa\Om$, 
and $||u_2-v||_{\infty}<\ep/2=:\ep_2$. Consider the set
\begin{equation*}
\Om_2:=\left\{x\in\Om\,:\,{\rm dist}(x,\pa\Om)>\frac12\right\},
\end{equation*}
which is nonempty up to replacing $1/2$ by some smaller positive constant,
and let $\{\rho_{\delta}\}$ be a family of mollifiers, so that there exists
$0<\delta_2\leq1/2$ such that 
\[
||\rho_{\delta_2}{\ast}\na u_2-\na u_2||_{L^1(\Om_2;\MNN)}<1/2.
\]
For $i\geq 3$, choosing $0<\delta_i\leq\min\{\delta_{i-1},1/2^i\}$ and setting $\ep_i:=\delta_i\ep_{i-1}$,
an application of Theorem \ref{aperti} at each step yields that there exists
a piecewise affine Lipschitz map $u_i:\Om\to\R^N$ such that 
\begin{equation}\label{stare_in}
\na u_i\in U_i\ \mbox{ a.e.~in }\ \Om,\qquad 
u_i=u_{i-1}\ \mbox{ on }\ \pa\Om,\qquad
||u_i-u_{i-1}||_{\infty}<\ep_i.
\end{equation}
Moreover,
\begin{equation}\label{strong_cvg_cvx_int}
||\rho_{\delta_i}{\ast}\na u_i-\na u_i||_{L^1(\Om_i,\MNN)}<\frac1{2^i}, 
\end{equation}
where $\Om_i:=\left\{x\in\Om\,:\,{\rm dist}(x,\pa\Om)>1/2^{i-1}\right\}$.
Since $\ep_i\to0$, from the third condition in (\ref{stare_in})
we deduce that $\{u_i\}$ is a Cauchy sequence in $L^{\infty}(\Om;\R^N)$. This fact, together with
the first condition in (\ref{stare_in}) and Definition \ref{inapprox_per_nonlinear} (2), implies that
$\{u\}_i$ converges uniformly on $\overline\Om$ to some $u\in W^{1,\infty}(\Om;\R^N)$.
This implies in particular that $u$ satisfies (ii) and (iii), also in view of the fact that  
\begin{equation*}
||u_i-v||_{\infty}\leq\sum_{j=3}^i||u_j-u_{j-1}||_{\infty}+||u_2-v||_{\infty}<
\frac{\ep}2\sum_{j=0}^{i-1}\frac1{2^j}<\ep.
\end{equation*}
It remains to show that $u$ satisfies condition (i).   
Since $||\na\rho_{\delta_i}||_{L^1(\Om;\R^N)}\leq\frac C{\delta_i}$ for some constant $C>0$ independent of $\delta_i$,
using again the third condition in (\ref{stare_in}) we get
\begin{eqnarray*}
||\rho_{\delta_i}{\ast}(\na u_i-\na u)||_{L^1(\Om_i;\MNN)}
%&=&||\na\rho_{\delta_i}{\ast}(u_i- u)||_{L^1(\Om_i,\MNN)} \\
%   &\leq&||\na\rho_{\delta_i}||_{L^1(\Om;\R^N)}||u_i-u||_{L^{\infty}(\Om;\R^N)}\\
    &\leq&\frac C{\delta_i}\sum_{l=i}^{+\infty}||u_l-u_{l+1}||_{\infty}
                      <\frac C{\delta_i}\sum_{l=i}^{+\infty}\delta_l\ep_{l+1}\\
    &\leq&C\sum_{l=i}^{\infty}\ep_{l+1}<2C\ep_{i+1}.
\end{eqnarray*}
From this estimate and from (\ref{strong_cvg_cvx_int}) we can deduce that
\begin{multline}\label{eqnar:disug2}
||\na u_i-\na u||_{L^1(\Om;\MNN)}
         \leq||\na u_i-\na u||_{L^1(\Om_i;\MNN)}+||\na u_i-\na u||_{L^1(\Om\setminus\Om_i;\MNN)}\\
% &\leq&||\na u_i-\rho_{\delta_i}{\ast}\na u_i||_{L^1}
%                        +||\rho_{\delta_i}{\ast}(\na u_i-\na u)||_{L^1}\\
% &&\qquad\qquad\quad+||\rho_{\delta_i}{\ast}\na u-\na u||_{L^1}
%                +||\na u_i-\na u||_{L^1(\Om\setminus\Om_i;\MNN)}\\
  \leq\frac1{2^i}+2C\ep_{i-1}+||\rho_{\delta_i}{\ast}\na u-\na u||_{L^1}
                   +||\na u_i-\na u||_{L^1(\Om\setminus\Om_i;\MNN)}.
\end{multline}
Since $\delta_i,\ep_i\to0$, and since $|\Om\setminus\Om_i|\to0$ 
and $\{\na u_i\}$ is bounded in $L^{\infty}(\Om,\R^N)$, 
from (\ref{eqnar:disug2}) we obtain that $\na u_i\to\na u$ in $L^1(\Om,\MNN)$. 
In particular, we have that, up to a subsequence, $\na u_i\to\na u$ a.e.~in $\Om$ and in turn, 
by the first condition in (\ref{stare_in}) and by Definition \ref{inapprox_per_nonlinear}, that $\na u\in K_0$.
\end{proof}

\bigskip

In what follows, we denote by $B_1$ the ball $B(0,1)\subset\R^N$ and by $[\cdot]_{\alpha}$ or $[\cdot]_{\alpha,\Delta}$  
the standard seminorm in $C^{0,\alpha}(\Delta;\R^m)$, and we provide the proof of Proposition \ref{perHol}. In order to do this,
we use a procedure already used in \cite{MuSv2}, which is based on a preliminary result (Lemma \ref{corollario_hol} below).
This consists in proving that starting from a divergence-free function $u\in C^{1,\alpha}(\overline{B_1};\R^N)$ such that 
$[\na u]_{\alpha}\leq\delta$, it is possible to construct another divergence-free function $\tilde u$ which is affine
on $B_{1/2}$ and such that $u\in C^{1,\alpha}(\overline{B_1}\setminus B_{1/2};\R^N)$ and 
$\|u-\tilde u\|_{W^{1,\infty}(B_1;\R^N)}\leq C\delta$. Such a construction can be done by using \cite[Theorem 14.2]{Daco}.
This result of Dacorogna says that for $m\geq0$ and $1<\alpha<1$ there exists a constant $K=K(m,\alpha,\Om)>0$ 
with the following property:
if $f\in C^{m,\alpha}(\overline\Om)$ satisfies $\int_{\Om}f(x)dx=0$,
then there exists $L(f)\in C^{m+1,\alpha}(\overline\Om;\R^N)$ verifying
\begin{equation*}
\left\{
\begin{array}{ll}
\divv L(f)=f & \mbox{in }\Om,\\
u=0 & \mbox{on }\pa\Om,
\end{array}
\right.
\end{equation*}
and such that 
$
||u||_{C^{m+1,\alpha}}\leq K||f||_{C^{m,\alpha}}.
$

\noindent Once the intermediate result has been established,
the proof of Proposition \ref{perHol} consists roughly in filling $\Om$ by a disjoint union 
$\bigcup_{i=1}^IB(a_i,r)$ and applying the intermediate result to each ball $B(a_i,r)$,
so that we can replace $u$ by a function $\tilde u$ which is affine of
$\bigcup_{i=1}^IB(a_i,r/2)$ and endowed with the same regularity of $u$ on
$\Om\setminus\bigcup_{i=1}^IB(a_i,r/2)$. It is then possible to
repeat the same argument to $\tilde u$ on $\Om\setminus\bigcup_{i=1}^IB(a_i,r/2)$
and then iterate it. Choosing smaller and smaller radii, this iterative procedure
converges to a piecewise affine function $u_{\infty}$ such that $u_{\infty}=u$ on $\pa\Om$.   
 
\begin{lemma}\label{corollario_hol}
For every $0<\alpha<1$, there exists a constant $C=C(N,\alpha)>0$ with the following property. 
For every $\delta>0$, $a\in \R^N$, $r>0$, and every $u\in C^{1,\alpha}(\overline{B(a,r)};\R^N)$ such that
\begin{equation}\label{ipo_cor_6.5_1}
\divv u=0\quad\mbox{in }\ B(a,r)\qquad\mbox{and}\qquad r^{\alpha}[\na u]_{\alpha}\leq\delta,
\end{equation}
there exists $\tilde u\in C^0(\overline{B(a,r)};\R^N)\cap C^{1,\alpha}(\overline{B(a,r)}\setminus B(a,r/2);\R^N)$
satisfying
\begin{align}
& \divv\tilde u=0\quad\mbox{a.e.~in }B(a,r),\label{tesitilde1}\\
& \na\tilde u(x)=\na u(a)\mbox{ for every }x\in B(a,r/2)
                    \mbox{ and }\tilde u =u\mbox{ on }\pa B(a,r/2),\label{tesitilde2}\\
& r^{-1}||u-\tilde u||_{\infty}+||\na u-\na\tilde u||_{\infty}\leq C\delta.\label{stimettatilde}
\end{align}
\end{lemma}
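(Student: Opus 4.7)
My plan is to reduce to the unit ball by the scaling $u(x)\mapsto r^{-1}u(a+rx)$, which turns the hypothesis $r^{\alpha}[\na u]_{\alpha}\leq\delta$ into $[\na U]_{\alpha,B_1}\leq\delta$; producing $\tilde U$ on $B_1$ and then rescaling back yields exactly the factor $r^{-1}$ in front of the $L^\infty$ term of \eqref{stimettatilde}. After this reduction the natural affine candidate is $\ell(x):=u(0)+\na u(0)\,x$. It is automatically divergence free because $\tr\na u(0)=\divv u(0)=0$, and the Taylor bound induced by $[\na u]_{\alpha}\leq\delta$ gives $\|u-\ell\|_\infty\leq C\delta$, $\|\na u-\na\ell\|_\infty\leq\delta$, and $[\na(u-\ell)]_\alpha\leq\delta$, so that $\|u-\ell\|_{C^{1,\alpha}(\overline{B_1})}\leq C\delta$.

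To interpolate between $\ell$ on $B_{1/2}$ and $u$ on $\pa B_1$ I would work on the annulus $A:=B_1\setminus\overline{B_{1/2}}$ with a fixed radial cutoff $\ffi\in C^\infty([0,1])$ satisfying $\ffi\equiv0$ on $[0,1/2]$ and $\ffi(1)=1$, setting $w(x):=\ell(x)+\ffi(|x|)(u(x)-\ell(x))$. Then $w=\ell$ on $\pa B_{1/2}$ and $w=u$ on $\pa B_1$, but the cutoff creates a spurious divergence
\[
f(x):=\divv w(x)=\ffi'(|x|)\,\tfrac{x}{|x|}\cdot(u(x)-\ell(x)),
\]
since $\divv u=\divv\ell=0$. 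Two key observations: first, $\|f\|_{C^{0,\alpha}(\overline A)}\leq C\|u-\ell\|_{C^{1,\alpha}(\overline B_1)}\leq C\delta$; second, by the divergence theorem and $\divv u=\divv\ell=0$,
\[
\int_A f=\int_{\pa B_1}u\cdot\nu-\int_{\pa B_{1/2}}\ell\cdot\nu=\int_{B_1}\divv u-\int_{B_{1/2}}\divv\ell=0.
\]
This mean-zero condition lets me invoke Dacorogna's Theorem 14.2 (quoted in the excerpt) on $A$ to produce $L(f)\in C^{1,\alpha}(\overline A;\R^N)$ with $\divv L(f)=f$ in $A$, $L(f)\equiv 0$ on $\pa A$, and $\|L(f)\|_{C^{1,\alpha}}\leq K\|f\|_{C^{0,\alpha}}\leq C\delta$.

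Define $\tilde u:=\ell$ on $\overline{B_{1/2}}$ and $\tilde u:=w-L(f)$ on $\overline A$. Continuity across $\pa B_{1/2}$ holds because $w|_{\pa B_{1/2}}=\ell$ and $L(f)$ vanishes there; the matching with $u$ on the outer sphere holds because $w|_{\pa B_1}=u$ and $L(f)|_{\pa B_1}=0$; the identity $\divv\tilde u=0$ holds separately on $B_{1/2}$ (where $\tilde u=\ell$) and on $A$ (by construction of $L(f)$); and $\na\tilde u\equiv\na u(0)$ on $B_{1/2}$. For the $W^{1,\infty}$ estimate, on $B_{1/2}$ it reduces to $\|u-\ell\|_{W^{1,\infty}}\leq C\delta$, while on $A$ I split $u-\tilde u=(1-\ffi(|x|))(u-\ell)+L(f)$ and estimate the first term by $C\|u-\ell\|_{W^{1,\infty}}\leq C\delta$ and the second by $\|L(f)\|_{C^{1,\alpha}}\leq C\delta$. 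Undoing the scaling yields \eqref{stimettatilde}. The main obstacle is precisely the divergence-free constraint: a plain cutoff interpolation between $\ell$ and $u$ violates it, and the only role of Dacorogna's theorem is to manufacture a $C^{1,\alpha}$ corrector with vanishing trace that absorbs the small divergence generated by $\ffi'$, the required solvability condition being provided for free by $\divv\ell=\divv u=0$ and the divergence theorem.
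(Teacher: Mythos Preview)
Your argument is correct and essentially coincides with the paper's own proof: reduce to the unit ball by scaling, take the affine map with gradient $\na u(0)$, interpolate against $u$ across the annulus with a cutoff, and then correct the spurious divergence via Dacorogna's right inverse of $\divv$ with zero boundary values, using the mean-zero condition furnished by the divergence theorem. The only cosmetic differences are that the paper normalizes $u(0)=0$, writes the interpolation as $\ffi u_0+(1-\ffi)u$ with $\ffi\equiv 1$ on $B_{1/2}$ (your cutoff is the complementary one), and simply asserts $\int f=0$ where you verify it explicitly; note also that the boundary condition you correctly establish is $\tilde u=u$ on $\pa B_1$, which is indeed how the lemma is used in the proof of Proposition~\ref{perHol} (the ``$\pa B(a,r/2)$'' in \eqref{tesitilde2} is a typo for ``$\pa B(a,r)$'').
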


\begin{proof}
Let us first prove the lemma in the case $a=0$, $r=1$, and $u(0)=0$.
For any $u\in C^{1,\alpha}(\overline{B_1};\R^N)$ such that
\begin{equation}\label{ipoa0}
\divv u=0\quad\mbox{in }\ B_1\qquad\mbox{and}\qquad[\na u]_{\alpha}\leq\delta,
\end{equation}
define the affine function 
$u_0(x):=\na u(0)x$, for every $x\in\overline{B_1}$,
and the interpolation
$\hat u:=\varphi u_0+(1-\varphi)u$ on $U:=\overline{B_1}\setminus B_{\frac12}$,
where $\varphi\in C_c^{\infty}(B_1)$ is a fixed cut-off function such that $\varphi\equiv1$ on $B_{1/2}$.
It is easy to see that
\begin{equation}\label{stimahol2}
||u-u_0||_{L^\infty(B_1,R^N)}\leq||\na u-\na u_0||_{L^\infty(B_1,\MNN)}
\leq2^{\alpha}[\na u]_{\alpha}.
\end{equation}
In particular, we have that 
\begin{equation}\label{stimahol3}
||u-u_0||_{C^{0,\alpha}(B_1;\R^N)}\leq C_1(\alpha,N)[\na u]_{\alpha}.
\end{equation}
Defining
\begin{equation}\label{divatu}
f:=\divv\hat u=\na\varphi\cdot(u_0-u),
\end{equation}
we have that $f\in C^{0,\alpha}(U)$ and that $\int_{U}fdx=0$.  
% \begin{equation*}
% \int_{U}f(x)dx=\int_{\pa U}\hat u(x)\cdot\nu ds(x)
%=\int_{\pa B_1}u(x)\cdot\nu ds(x)-\int_{\pa B_{\frac12}}u_0(x)\cdot\nu ds(x)\\
% =\int_{B_1}{\rm div}\,u(x)dx-\int_{B_{\frac12}}{\rm tr}\,\na u(0)dx=0.
% \end{equation*}
Thus, by Dacorogna's result and by (\ref{divatu}), there exists $L(f)\in C^{1,\alpha}(U;\R^N)$ such that 
$\divv L(f)=f$ in $U$, $L(f)=0$ on $\pa U$, and such that
\begin{equation}\label{stimaLf}
||L(f)||_{C^{1,\alpha}(U;\R^N)}\leq C_2(N,\alpha)||u-u_0||_{C^{0,\alpha}(U;\R^N)}.
\end{equation}
Now, consider the function
\begin{equation*}
\tilde u:=
\left\{
\begin{array}{ll}
u_0 & \mbox{on }\overline{B_{\frac12}},\\
\hat u-L(f) & \mbox{on }U.
\end{array}
\right.
\end{equation*}
It is clear that $\tilde u$ is a function of class $C^0(\overline{B_1};\R^N)\cap C^{1,\alpha}(U;\R^N)$
satisfying properties (\ref{tesitilde1})-(\ref{tesitilde2}) with $a=0$ and $r=1$.
To check (\ref{stimettatilde}), note that the definition of $\tilde u$ and estimates 
(\ref{stimahol2})-(\ref{stimahol3}) imply that
\begin{equation}\label{stimatilde1}
||u-\tilde u||_{W^{1,\infty}(B_1,\R^N)}\leq C_3(N,\alpha)[\na u]_{\alpha}.
\end{equation}
By using (\ref{ipoa0}), from estimate (\ref{stimatilde1}) we deduce that
(\ref{stimettatilde}) holds with $r=1$ and $C=\tilde C_3(N,\alpha)$. 

\noindent Now, let us prove the lemma for a generic ball $B(a,r)\subset\R^N$ and for every
$u\in C^{1,\alpha}(\overline{B(a,r)};\R^N)$ satisfying (\ref{ipo_cor_6.5_1}).
The function $v\in C^{1,\alpha}(\overline{B_1};\R^N)$ defined by
\begin{equation*}
v(x):=\frac{u(rx+a)-u(a)}r
\end{equation*}
is such that $v(0)=0$ and satisfies the conditions in (\ref{ipoa0}).
The previous proof shows that then there exists
$\tilde v\in C^0(\overline{B_1};\R^N)\cap C^{1,\alpha}(\overline{B_1}\setminus B_{1/2};\R^N)$
satisfying (\ref{tesitilde1})-(\ref{stimettatilde}) with $a=0$ and $r=1$. Thus, the function
\begin{equation*}
\tilde u(x):=r\tilde v\left(\frac{x-a}r\right)+u(a)
\end{equation*}
is of class $C^0(\overline{B(a,r)};\R^N)\cap C^{1,\alpha}(\overline{B(a,r)}\setminus B(a,r/2);\R^N)$
and satisfies (\ref{tesitilde1})-(\ref{stimettatilde}) with $C=C_3(N,\alpha)$. 
%\marginpar{fasc 11-12-13/4}
\end{proof}

We are now in position to prove Proposition \ref{perHol}.

\begin{proof}[Proof of Proposition \ref{perHol}]
Fix $\delta>0$. The idea of the proof is to construct a strictly decreasing sequence of open sets $\Om_k\subset\Om$ 
and a sequence of maps $u^{(k)}$ such that $\Om_0=\Om$, $u^{(0)}=u$, 
$u^{(k)}\in W^{1,\infty}(\Om;\R^N)$, and 
\begin{align}
&||u^{(k)}-u^{(k+1)}||_{1,\infty}\leq\frac{\delta}{2^{k+1}},\label{ott1}\\
&\divv u^{(k)}=0\quad\mbox{a.e.~in }\Om,\label{ott0}\\
& u^{(k)}=u\quad\mbox{on }\pa\Om,\label{ott11}\\
&\displaystyle u^{(k+1)}=u^{(k)}\quad\mbox{on}\quad 
\bigcup_{i=1}^{n_k}\overline{A_i^{(k)}}\cup N_k
       %\bigcup_{i=1}^{n_k}\pa B_i^{(k)}
               =\Om\setminus\Om_k,
                       \quad\mbox{for every }k\geq1,\label{ott2}\\
&|\Om_{k+1}|\leq\eta|\Om_k|,\label{ott3}
\end{align}
where $\eta\in(0,1)$, 
%$\overline{A_i^{(k)}}$ is strictly contained in $\Om$, and 
$u^{(k)}$ is affine on each $\overline{A_i^{(k)}}$, and $N_k$ is a closed set of null measure. 
This construction implies the existence of a Lipschitz map
$v:\Om\to\R^N$ such that $u^{(k)}\to v$ in $W^{1,\infty}(\Om;\R^N)$ (by (\ref{ott1})),
$\divv v=0$ a.e.~on $\Om$ (by (\ref{ott0})),
and $v=u$ on $\pa\Om$ (by (\ref{ott11})). Moreover, (\ref{ott1}) implies that
\begin{equation*}
||u-u^{(k+1)}||_{1,\infty}\leq\sum_{i=0}^k||u^{(i)}-u^{(i+1)}||_{1,\infty}
                       \leq\delta\sum_{i=0}^k\frac1{2^{i+1}}
%\leq\delta\sum_{i=0}^{\infty}\frac1{2^{i+1}}=
\leq\delta,
\end{equation*}
for every $k$, and therefore
$
||u-v||_{W^{1,\infty}}\leq\delta.
$
Finally, (\ref{ott3}) implies that
% \begin{equation*}
% |\Om_{k+1}|\leq\eta|\Om_k|\leq\eta^2|\Om_{k-1}|
% \leq...\leq\eta^k|\Om_1|,
% \end{equation*}
$|\Om\setminus\Om_k|\to|\Om|$. Since $\Om\setminus\Om_k$
is the set where $u^{(k)}$ is piecewise affine, and $u_l=u_k$ on $\Om\setminus\Om_k$ for very $l\geq k$,
then $v$ is piecewise affine on $\Om$.
Now, let us describe the construction of the sequences $\{\Om_k\}$ and $\{u^{(k)}\}$.
Consider $\Om''\subset\subset\Om'\subset\subset\Om$ such that $|\Om''|\geq\frac12|\Om|$,
and cover $\Om''$ by a lattice of $n_1$ disjoint open cubes $C_i^{(1)}$ with half-side $r\leq1$. If $r$ is sufficiently small,
then $\bigcup_{i=1}^{n_1}C_i^{(1)}\subseteq\Om'$. Also, 
% since $\Om'$ is compactly contained in $\Om$,
there exists a constant $M(\Om')>0$ such that
\begin{equation}\label{passeggino}
[\na u]_{\alpha,C_i^{(1)}}\leq M(\Om'),\qquad\mbox{for every }i=1,...,n_1.
\end{equation}
Let $B_i^{(1)}$ be the open ball inscribed in $C_i^{(1)}$. By (\ref{passeggino}) we have that
\begin{equation*}
r^{\alpha}[\na u]_{\alpha,B_i^{(1)}}\leq\frac{\delta}2,\qquad\mbox{if }r\mbox{ is small enough},
\end{equation*}
%\marginpar{oss: qui non funziona se $\alpha=0$}
so that the hypotheses of Lemma \ref{corollario_hol} are satisfied by $u$ on $B_i^{(1)}$.
Hence, denoting by $A_i^{(1)}$ the open ball with the 
same centre as $B_i^{(1)}$ and with radius $r/2$, there exists 
$u_i^{(1)}\in C^0\left(\overline{B_i^{(1)}};\R^N\right)\cap 
C^{1,\alpha}\left(\overline{B_i^{(1)}}\setminus A_i^{(1)};\R^N\right)$
such that
\begin{equation*}
\divv u_i^{(1)}=0\quad\mbox{a.e.~in }\ B_i^{(1)},
\qquad
u_i^{(1)}\mbox{ is affine in }A_i^{(1)},
\qquad u _i^{(1)}=u\quad\mbox{ on }\pa B_i^{(1)},
\end{equation*}
and
\begin{equation*}
||u-u_i^{(1)}||_{W^{1,\infty}(B_i^{(1)};\R^N)}
     \leq r^{-1}||u-u_i^{(1)}||_{L^{\infty}(B_i^{(1)};\R^N)}
               +||\na u-\na u_i^{(1)}||_{L^{\infty}(B_i^{(1)};\MNN)}\leq\frac{c\delta}2,
\end{equation*}
where the constant $c>0$ depends only on $N$ and $\alpha$. Now, define
\begin{equation*}
u^{(1)}:=\left\{
\begin{array}{ll}
u_i^{(1)} & \mbox{on }B_i^{(1)},\ i=1,...,n_1,\\
u & \mbox{on }\displaystyle\Om\setminus\bigcup_{i=1}^{n_1}B_i^{(1)},
\end{array}
\right.
\qquad
\Om_1:=\Om\setminus\bigcup_{i=1}^{n_1}\left(\overline{A_i^{(1)}}\cup\pa B_i^{(1)}\right).
\end{equation*}
Note that, since the ratio between the volume of a ball and the volume of a circumscribed cube is
a constant $\lambda=\lambda(N)\in(0,1)$, we have that
\begin{equation*}%\label{stimetta_aree1}
\sum_{i=1}^{n_1}\left|A_i^{(1)}\right|
       =\lambda\sum_{i=1}^{n_1}\left|C_i^{(1)}\right|\geq\lambda|\Om''|\geq\frac{\lambda}2|\Om|,
\end{equation*}
and in turn $|\Om_1|\leq\eta|\Om|$, where $0<\eta:=1-\frac{\lambda}2<1$.
From the definition of $u^{(1)}$ we deduce that $u^{(1)}$ is piecewise affine in $\Om\setminus\Om_1$, 
that $\Om\setminus\Om_1$ is a finite union of disjoint balls (up to a null set),
that $u^{(1)}\in W^{1,\infty}(\Om;\R^N)\cap C^{1,\alpha}_{loc}(\Om_1;\R^N)$, 
that $u^{(1)}=u$ on $\pa\Om$, that ${\rm div}\,u^{(1)}=0$ a.e.~in $\Om$, and that
\begin{equation*}
||u-u^{(1)}||_{1,\infty}=
\max_{i\in\{1,...,n_1\}}||u-u_i^{(1)}||_{W^{1,\infty}(B_i^{(1)};\R^N)}\leq\frac{c(\alpha)\delta}2.
\end{equation*}
Repeating the same construction on $\Om_1$ and then iterating it defines the sequences $\{\Om_k\}$ and $\{u^{(k)}\}$.
\end{proof}

%%%%%%%%%%%%%%%%%%%%%%%%%%%%%
%%%%%%%%%%%%%%%%%%%%%%%%%%%%%
%%%%%%%%%%%%%%%%%%%%%%%%%%%%%


\begin{thebibliography}{99}

%\bibitem{AgDaDe} \textsc{V. Agostiniani, G. Dal Maso, A. DeSimone},
%            {\em Linear elasticity obtained from finite elasticity by $\Gamma$-convergence 
%                                         under weak coerciveness conditions}, 
%            Ann. Inst. H. Poincar\'e Anal. Non Lin\'eaire,
%            http://dx.doi.org/10.1016/j.anihpc.2012.04.001\,.

\bibitem{AgDe1} \textsc{V. Agostiniani, A. DeSimone},
            {\em $\Gamma$-convergence of energies for nematic elastomers in the small strain limit},
            Cont. Mech. Thermodyn. {\bf23} n. 3, 2011, 257--274.

\bibitem{AgDe2} \textsc{V. Agostiniani, A. DeSimone}, 
            {\em Ogden-type energies for nematic elastomers},
            International Journal of Non-Linear Mechanics \textbf{47}, 2012, 402--412.

%\bibitem{EarlyFried} \textsc{D. R. Anderson, D. E. Carlson, E. Fried},
%            {\em A continuum-mechanical theory for nematic elastomers}, 
%            J. Elasticity \textbf{56}, 1999, 33--58.

%\bibitem{BaJa} \textsc{J. M. Ball, R. D. James}, 
%            {Fine phase mixtures as minimizers of energy}, 
%            Arch. Rat. Mech. Anal. \textbf{100}, 13--52, 1987. 

\bibitem{Bal_Hol} \textsc{J. M. Ball, P. J. Holmes, R. D. James, R. L. Pego, P. J. Swart}, 
            {\em On the dynamics of fine structure}, 
            J. Nonlinear Sci. \textbf{1}, 1991, 17--70.  

%\bibitem{BaMu} \textsc{J. M. Ball, F. Murat}, 
%            {\em $W^{1,p}$-quasiconvexity and variational problems for multiple integrals}, 
%            J. Funct. Anal. \textbf{58}, 1984, 225--253. 

%\bibitem{bhatta} \textsc{K. Bhattacharya}, 
%            {\em Microstructure of martensite},
%            Oxford University Press, Oxford, 2003.

%\bibitem{BigginsEtAl:08} \textsc{J.S. Biggins, E.M. Terentjev, M. Warner},
%          {\em Semisoft elastic response of nematic elastomers to complex deformations}, 
%            Phys. Rev. E {\bf 78}, 2008,  041704.1--9.

\bibitem{Bla}\textsc{P. Bladon, E. M. Terentjev, M. Warner}, 
            {\em Transitions and instabilities in liquid-crystal elastomers},
            Phys. Rev. E \textbf{47}, 1993, R3838--R3840.

\bibitem{Ce} \textsc{P. Cesana},
            {\em Relaxation of multi-well energies in linearized elasticity and applications to nematic elastomers},
            Arch. Rat. Mech. Anal. \textbf{197} n. 3, 2010, 903--923.

%\bibitem{CeDe1} \textsc{P. Cesana, A. DeSimone}, 
%            {\em Strain-order coupling in nematic elastomers: equilibrium configurations}, 
%            Math. Models Methods Appl. Sci. \textbf{19}, 2009, 601--630.

\bibitem{CeDe2}\textsc{P. Cesana, A. DeSimone},
            {\em Quasiconvex envelopes of energies for nematic elastomers in the small strain regime and applications},
            J. Mech. Phys. Solids \textbf{59} n. 4, 2011, 787-–803.

%\bibitem{Ciarlet} 
%    \newblock \textsc{P. G. Ciarlet},
%     \newblock ``Mathematical elasticity. Volume I:three dimensional elasticity," 
%     \newblock North Holland, Amsterdam, 1988.

%\bibitem{JMPS} \textsc{S. Conti, A. DeSimone, G. Dolzmann},
%            {\em Soft elastic response of stretched sheets of nematic elastomers: a numerical study},
%            J. Mech. Phys. Solids \textbf{50}, 2002, 1431--1451.

%\bibitem{PRE} \textsc{S. Conti, A. DeSimone, G. Dolzmann},
%            {\em Semi-soft elasticity and director reorientation in stretched sheets of nematic elastomers},
%            Phys. Rev. E \textbf{60}, 2002, 61710-1--8.

%\bibitem{Co-Do} \textsc{S. Conti, G. Dolzmann}, 
%            {\em $\Gamma$-convergence for incompressible elastic plates}, 
%            Calc. Var. Partial Differential Equations \textbf{34} n. 4, 2009, 531--551. 

%\bibitem{CoDoMu} \textsc{S. Conti, G. Dolzmann, S. M\"uller}, 
%            {\em Korn's second inequality and geometric rigidity with mixed growth conditions}, 
%            arXiv:1203.1138.

\bibitem{Daco} 
     \newblock \textsc{B. Dacorogna},
     \newblock ``Direct methods in the calculus of variations," 
     \newblock second ed. Springer, 2007.

%\bibitem{Da}\textsc{B. Dacorogna},
%{\em Existence and regularity of solutions of $d\omega=f$ with Dirichlet boundary conditions},
%in Nonlinear problems in mathematical physics and related topics I, New York, 2002, 67--82.

\bibitem{DaMa} 
     \newblock \textsc{B. Dacorogna, P. Marcellini},
     \newblock ``Implicit partial differential equations," 
     \newblock Birkh\"auser, Boston, 1999.

%\bibitem{DaMo}\textsc{B. Dacorogna, J. Moser},
%            {\em On a partial differential equation involving the Jacobian determinant},
%            Ann. Inst. H. Poincar\'e Anal. Non Lin\'eaire \textbf{70} n. 1, 1990, 1--26.

\bibitem{Cam_Laz}\textsc{C. De Lellis, L. Sz{\'e}kelyhidi Jr.},
            {\em The Euler equations as a differential inclusion},
            Ann. of Math. (2)\textbf{170} n. 3, 2009, 1417--1436.	

%\bibitem{deDe}\textsc{M. de Luca, A. DeSimone},
%            {\em Mathematical and numerical modeling of liquid crystal elastomer phase transition and deformation},
%            MRS Proceedings \textbf{1403}, 
%            http://dx.doi.org/10.1557/opl.2012.249\,.

\bibitem{ferro}\textsc{A. DeSimone}, 
            {\em Energetics of fine domain structures}, 
            Ferroelectrics \textbf{222}, 1999, 275--284.

%\bibitem{DeDo1} \textsc{A. DeSimone, G. Dolzmann},
%            {\em Material instabilities in nematic elastomers},
%            Physica D \textbf{136}, 2000, 175--191.

\bibitem{DeDo2} \textsc{A. DeSimone, G. Dolzmann}, 
            {\em Macroscopic response of nematic elastomers 
            via relaxation of a class of $SO(3)$-invariant energies},
            Arch. Rat. Mech. Anal. \textbf{161}, 2002, 181--204.

\bibitem{Ant} \textsc{A. DeSimone, L. Teresi}, 
            {\em Elastic energies for nematic elastomers}, 
             Eur. Phys. J. E \textbf{29}, 2009, 191--204.

%\bibitem{EvGa} 
%     \newblock \textsc{L. C. Evans, R. Gariepy},
%     \newblock ``Measure theory and fine properties of functions," 
%     \newblock CRC Press, Boca Raton, 1992.

%\bibitem{stress} \textsc{J. K\"upfer, H. Finkelmann},
%            {\em Nematic liquid single-crystal elastomers},
%            Makromol. Chem. Rapid Commun. \textbf{12}, 1991, 717-726.

%\bibitem{Flo} 
%     \newblock \textsc{P. J. Flory},
%     \newblock ``Principles of polymer chemistry," 
%     \newblock Cornell University Press, London, 1953.

%\bibitem{Fried} \textsc{E. Fried, V., Korchagin}, 
%           {\em Striping of nematic elastomers},
%           Int. J. Solids Structures \textbf{39},  2002, 3451--3467.

% \bibitem{FJM} \textsc{G. Friesecke, R. D. James, S. M\"uller}, 
%            {\em A theorem on geometric rigidity and the derivation on nonlinear plate theory from three-dimensional elasticity}, 
%            Commun. Pure Appl. Math. \textbf{55}, 2002, 1461--1506. 

%\bibitem{FJM2} \textsc{G. Friesecke, R. D. James, S. M\"uller}, 
%            {\em A hierarchy of plate models derived from nonlinear elasticity by $\Gamma$-convergence}, 
%            Arch. Ration. Mech. Anal. \textbf{180}, 2006, 183--236. 

\bibitem{Fri_McL} \textsc{G. Friesecke, J. B. McLeod}, 
            {\em Dynamics as a mechanism preventing the formation of finer and finer microstructure}, 
            Arch. Ration. Mech. Anal. \textbf{133}, 1996, 199--247.

%\bibitem{Fukunaga} \textsc{A. Fukunaga, K. Urayama, T. Takigawa, A. DeSimone, L. Teresi}, 
%            {\em Dynamics of electro-opto-mechanical effects in swollen nematic elastomers}, 
%            Macromolecules \textbf{41}, 2008, 9389--9396.

%\bibitem{Gu} 
%     \newblock \textsc{M. E. Gurtin},
%     \newblock ``An introduction to continuum mechanics," 
%     \newblock Academic Press, New York, 1981.

%\bibitem{Hol} 
%     \newblock \textsc{G. A. Holzapfel},
%     \newblock ``Nonlinear solid mechanics: a continuum approach for engineering," 
%     \newblock Wiley, Chichester, 2000.

%\bibitem{Kohn} \textsc{R. V. Kohn}, 
%            {\em The relaxation of a double-well energy}, 
%            Continuum Mech. Thermodyn. \textbf{3} n. 3, 1991, 193--236.

%\bibitem{BrandMenzelPleiner}\textsc{A. Menzel, H. Pleiner, H. Brand},
%            {\em Nonlinear relative rotations in liquid crystalline elastomers},
%            J. Chem. Phys. \textbf{126}, 2009, 234901--1--9.

\bibitem{Mu-Pa2} \textsc{S. M\"uller, M. Palombaro}, 
            {\em On a differential inclusion related to the Born-Infeld equations}, 
            arXiv:1201.4244.

\bibitem{MuSv}\textsc{S. M\"uller, V. \u Sver\'ak},
            {\em Attainment results for the two-well problem by convex integration},
            in Geometric analysis and the calculus of variations, Int. Press, Cambridge, 1996, 239--251.

\bibitem{MuSv2}\textsc{S. M\"uller, V. \u Sver\'ak},
            {\em Convex integration with constraints and applications to phase transitions and partial differential equations},
            J. Eur. Math. Soc. \textbf{1}, 1999, 393--442.

%\bibitem{MuSv3}\textsc{S. M\"uller, V. \u Sver\'ak},
%{\em Convex integration for Lipschitz mappings and counterexamples to regularity},
%Ann. of Math. (2) 157, 2003, 715--742.

%\bibitem{Ogden} 
%     \newblock \textsc{R. W. Ogden},
%     \newblock ``Non-linear elastic deformations," 
%     \newblock Dover, Mineola (N. Y.), 1997.

%\bibitem{Lubiana} \textsc{A. Petelin, M. Copic},  
%           {\em Observation of a soft mode of elastic instability in liquid crystal elastomers},
%           Phys. Rev. Lett. {\bf 103}, 2009,  077801--1--4.

% \bibitem{Pi} \textsc{A.C. Pipkin},
%           {\em Elastic materials with two preferred states},
%           Quart. J. Mech. Appl. Math. \textbf{44} n. 1, 1991, 1--15.

\bibitem{Po} \textsc{W. Pompe},
           {\em Explicit construction of piecewise affine mappings with constraints},
           Bulletin of the Polish Academy of Sciences Mathematics \textbf{58} n. 3, 2010, 209--220.

%\bibitem{Sch} \textsc{B. Schmidt}, 
%            {\em Linear $\Gamma$-limits of multiwell energies in nonlinear elasticity theory}, 
%            Continuum Mech. Thermodyn. \textbf{20} n. 6, 2008, 375--396.

%\bibitem{Si}\textsc{M. \v{S}ilhav\'y}, 
%            {\em Ideally soft nematic elastomers},
%            Netw. Heterog. Media \textbf{2} n. 2, 2007, 279--311.

%\bibitem{Tem} 
%     \newblock \textsc{R. Temam},
%     \newblock ``Probl\`emes math\'ematiques en plasticit\'e," 
%     \newblock Gauthiers-Villars, Paris, 1983.

%\bibitem{Tar} \textsc{L. Tartar},
%            {\em Some remarks on separately convex functions}, 
%            in Microstructures and phase transitions, 
%            %IMA Vol. Math. Appl. \textbf{54}, 
%            Springer, New York, 1993, 191--204.

%\bibitem{Treloar} 
%     \newblock \textsc{L.R.G. Treloar},
%     \newblock ``The Physics of Rubber Elasticity," 
%     \newblock 3rd ed.,
%     \newblock  Oxford University Press,  1975.

%\bibitem{Wa2} \textsc{G. C. Verwey, M. Warner, E. M. Terentjev},
%            {\em Elastic instability and stripe domains in liquid crystalline elastomers},
%            J. Phys. II France {\bf 34}, 1996, 1273--1290.

%\bibitem{Wa} 
%     \newblock \textsc{M. Warner, E. M. Terentjev},
%     \newblock ``Liquid crystal elastomers," 
%     \newblock  Clarendon Press, Oxford, 2003.

%\bibitem{Brand96} \textsc{J. Weilepp, H.~R. Brand},  
%            {\em Director reorientation in nematic-liquid-single-crystal elastomers by external mechanical stress},
%            Europhys. Lett. {\bf 34}, 1996, 495--500.

%\bibitem{YeEtAl:07} \textsc{F. Ye, R. Mukhopadhyay, O. Stenull, T.C. Lubensky},
%           {\em Semisoft nematic elastomers and nematics in crossed electric and magnetic fields}, 
%           Phys. Rev. Lett. {\bf 98}, 2007, 147801.

\bibitem{Zhang} \textsc{K. Zhang},
           {\em An approximation theorem for sequences of linear strains and its applications}, 
           ESAIM Control Optim. Calc. Var. {\bf 10}, 2004, 224--242.

%\bibitem{Zubarev:99} \textsc{E.~R. Zubarev, S.~A. Kuptsov, T.~I. Yuranova, R.~V. Talroze, H. Finkelmann},
%            {\em Monodomain liquid crystalline networks: reorientation mechanism from uniform to stripe domains}, 
%            Liquid Crystals \textbf{26}, 1999, 1531--1540.

\end{thebibliography}
\end{document}